\documentclass[english,pdftex,12pt]{article}

\usepackage{setspace}       %
\usepackage{amsfonts}       %
\usepackage{dsfont}
\usepackage{amsmath}        %
\usepackage{amssymb}        %
\usepackage{amsthm}         %
\usepackage{mathrsfs}       %
\usepackage[normalem]{ulem} %
\usepackage{bm}             %
\usepackage{bbm}            %
\usepackage{color}          %
\usepackage{xcolor}         %
\usepackage{mathtools} %
\usepackage[T1]{fontenc}
\usepackage{lmodern}   %
\usepackage{babel}
\usepackage{hyperref}       %
\normalfont
\DeclareFontShape{T1}{lmr}{bx}{sc} { <-> ssub * cmr/bx/sc }{}
\definecolor{MyDarkBlue}{rgb}{0.1,0.2,.65}
\hypersetup{linkcolor=black, citecolor=MyDarkBlue,
            urlcolor=black,colorlinks=true}   %

\setcounter{secnumdepth}{4}
\usepackage[titletoc]{appendix}

\usepackage[shortlabels]{enumitem}		    %

\usepackage{graphicx}                   %
\usepackage{subfig}                     %
\usepackage{float}                      %
\usepackage{tikz}                       %
\usetikzlibrary{decorations.markings}
\usetikzlibrary{decorations.pathreplacing}
\pgfdeclarelayer{edgelayer}             %
\pgfdeclarelayer{nodelayer}             %
\pgfsetlayers{edgelayer,nodelayer,main} %
\tikzstyle{none}=[inner sep=0pt]        %

\numberwithin{equation}{section}

\usepackage[longnamesfirst]{natbib} %
\setlength{\bibsep}{9.5pt}

\usepackage{todonotes}              %

\newtheoremstyle{mytheoremstyle} %
    {\topsep}                    %
    {\topsep}                    %
    {\itshape}                   %
    {}                           %
    {\bf}                %
    {.}                          %
    {.5em}                       %
    {}  %

\newtheoremstyle{mydefstyle} %
    {\topsep}                    %
    {\topsep}                    %
    {}                           %
    {}                           %
    {\bf}                %
    {.}                          %
    {.5em}                       %
    {}  %

\theoremstyle{mytheoremstyle}

\newtheorem*{lemma*}{Lemma}

\newtheorem{proposition}{Proposition}

\newtheorem*{theorem*}{Theorem}

\theoremstyle{mydefstyle}

\newtheorem*{axiom*}{Axiom}
\newtheorem*{comment*} {Comment}

\newcommand{\bC}{\mathbb{C}}

\newcommand{\bE}{\mathbb{E}}

\newcommand{\bR}{\mathbb{R}}

\newcommand{\bV}{\mathbb{V}}
\newcommand{\bW}{\mathbb{W}}

\newcommand{\cA}{\mathcal{A}}

\newcommand{\cC}{\mathcal{C}}
\newcommand{\cD}{\mathcal{D}}
\newcommand{\cE}{\mathcal{E}}
\newcommand{\cF}{\mathcal{F}}
\newcommand{\cG}{\mathcal{G}}

\newcommand{\cL}{\mathcal{L}}

\newcommand{\cQ}{\mathcal{Q}}

\newcommand{\cX}{\mathcal{X}}

\newcommand{\cZ}{\mathcal{Z}}

\newcommand{\beq}{ \begin{equation} }
\newcommand{\eeq}{ \end{equation} }
\newcommand{\bal}{ \begin{align*} }
\newcommand{\eal}{ \end{align*} }

\newcommand{\tr}{ \text{tr } }

\newcommand{\diverge}{\text{div }}

\makeatletter
\newcommand*\dotp{\mathpalette\dotp@{.5}}
\newcommand*\dotp@[2]{\mathbin{\vcenter{\hbox{\scalebox{#2}{$\m@th#1\bullet$}}}}}
\makeatother

\usepackage{afterpage}
\usepackage{mathrsfs}
\usepackage[margin=0.95in]{geometry}
\usepackage[ruled]{algorithm2e}
\usepackage{algorithmic}
\usepackage{tabularx}
\usepackage{makecell}
\usepackage{tablefootnote}

\onehalfspacing

\usepackage{mathtools}        %
\usepackage{lineno}           %
\mathtoolsset{showonlyrefs}   %

\usepackage[normalem]{ulem}

\definecolor{ao(english)}{rgb}{0.0, 0.5, 0.0}

\let\OLDthebibliography\thebibliography
\renewcommand\thebibliography[1]{
  \OLDthebibliography{#1}
  \setlength{\parskip}{0.0pt}
  \setlength{\itemsep}{6.0pt plus 0.3ex}
}

\begin{document}

\title{Global Solutions to Master Equations for Continuous Time Heterogeneous Agent Macroeconomic Models
}
\author{
Zhouzhou Gu\footnote{Princeton, Department of Economics. Email: zg3990@princeton.edu},
Mathieu Lauri\`{e}re\footnote{Shanghai Frontiers Science Center of Artificial Intelligence and Deep Learning; NYU-ECNU Institute of Mathematical Sciences at NYU Shanghai. Email: mathieu.lauriere@nyu.edu},
Sebastian Merkel\footnote{University of Exeter, Department of Economics. Email: s.merkel@exeter.ac.uk},
Jonathan Payne\footnote{Princeton, Department of Economics. Email: jepayne@princeton.edu}
\footnote{We would like to thank Adam Rebei for outstanding research assistance throughout this project. We are also very grateful to the comments and discussion from Fernando Alvarez, Mark Aguiar, Yacine Ait-Sahalia, Marlon Azinovic, Johannes Brumm, Markus Brunnermeier, Rene Carmona, Wouter Den Haan, Mikhail Golosov, Goutham Gopalakrishna, Lars Hansen, Ji Huang, Felix Kubler, Greg Kaplan, Moritz Lenel, Ben Moll, Galo Nu\~{n}o, Thomas J. Sargent, Simon Scheidegger, Jes\'{u}s Fern\'{a}ndez-Villaverde, Yucheng Yang, Jan \v{Z}emli\v{c}ka, and Shenghao Zhu. 
We are also thankful for the comments from participants in the DSE 2023 Conference on Deep Learning for Solving and Estimating Dynamic Models, the CEF 2023 Conference, the IMSI 2023 Conference, the 11th Western Conference on Mathematical Finance, the Macro-finance Reading Group at USC Marshall, the BFI Workshop at Chicago, and the Princeton Civitas Seminar.
We thank Princeton Research Computing Cluster for their resources.}
}

\date{\today}
\maketitle{}

\vspace{-0.75cm}

\begin{abstract}
\noindent We propose and compare new global solution algorithms for continuous time heterogeneous agent economies with aggregate shocks.
First, we approximate the agent distribution so that equilibrium in the economy can be characterized by a high, but finite, dimensional non-linear partial differential equation.
We consider different approximations: discretizing the number of agents, discretizing the agent state variables, and projecting the distribution onto a finite set of basis functions.
Second, we represent the value function using a neural network and train it to solve the differential equation using deep learning tools.
We refer to the solution as an Economic Model Informed Neural Network (EMINN).
The main advantage of this technique is that it allows us to find global solutions to high dimensional, non-linear problems.
We demonstrate our algorithm by solving important models in the macroeconomics and spatial literatures
(e.g. \cite{Krusell1998}, \cite{khan2007inventories}, \cite{bilal2021solving}). \\

\noindent\emph{JEL:} C63, C68, E60, E27. \\
\noindent\emph{Keywords:} Heterogeneous agents, computational methods, deep learning, inequality, mean field games, continuous time methods, aggregate shocks, global solution.

\end{abstract}

\thispagestyle{empty}
\clearpage
\pagenumbering{arabic}

\section{Introduction}

Macroeconomists have great interest in studying models with heterogeneous agents and aggregate shocks.
Recent work has characterized equilibria in these models recursively in continuous time using ``master equations'' from the mean field game theory literature.
A major challenge with these recursive formulations is that the agent distribution of agents states becomes an aggregate state variable and so the state space becomes infinite dimensional.
There have been attempts to resolve this issue by using perturbations in the distribution space (e.g. \cite{bilal2021solving}, \cite{alvarez2023price}, \cite{bhandari2023perturbational}).
In this paper we use tools from the deep learning literature to characterize global solutions to the master equation. 
This necessitates constructing a finite dimensional approximation to the cross-sectional distribution of agent states.
Most existing deep learning approaches are in discrete time and replace the agent continuum by a finite collection of agents.
We develop and compare the three main approaches for approximating the distribution in continuous time: imposing a finite number of agents, discretizing the state space, and projecting onto a collection of finite basis functions.
For each approximation, we show how to characterize general equilibrium as a high but finite dimensional differential master equation and how to customize deep learning techniques to compute global numerical solutions to the differential equation.

We develop solution techniques for a class of continuous time dynamic, stochastic, general equilibrium economic models with the following features.
There is a large collection of price-taking agents who face uninsurable idiosyncratic and aggregate shocks.
Given their belief about the evolution of aggregate state variables, agents choose control processes to solve dynamic optimization problems.
When making their decisions, agents face financial frictions that constrain their behaviour, which potentially breaks ``aggregation'' results and makes the distribution of agent states an aggregate state variable.
Solving for the rational expectations equilibrium reduces to solving a ``master'' partial differential equation (PDE) that summarizes both the agent optimization behaviour (from the Hamilton-Jacobi-Bellman equation (HJBE)) and the evolution of the distribution (from the Kolmogorov Forward Equation (KFE)).
A canonical example of this type of environment is the continuous time version of the \cite{Krusell1998} model, which we (and many others) use to our solution.
 
Our solution approach approximates the infinite dimensional master equation by a finite, but high, dimensional PDE and then uses deep learning to solve the high dimensional equation.
We consider three different approaches for reducing the dimension of the master equation: the \emph{finite-agent}, \emph{discrete-state}, and \emph{projection} methods.
The finite agent method approximates the distribution by a large, finite number of agents.
The discrete state method approximates the distribution by discretizing the agent state space so the density becomes a collection of masses at grid points.
The projection method approximates the distribution by a linear combination of finitely many basis functions. 
Most deep learning macroeconomic papers are in discrete time and have focused on the finite agent approximation. One exception is \cite{huang2023probabilistic}, which uses a discrete-state approximation.
All of our distribution approximations preserve the full non-linearity of the model, which is a key difference to perturbation based approaches.

We solve the finite dimensional approximation to the master equation by adapting the Deep Galerkin Method (DGM) and the Physics Informed Neural Networks (PINN) method developed in the applied mathematics and physics literatures.
This involves approximating the value function by a neural network and then using stochastic gradient descent to train the neural network to minimise a loss function that summarizes the average error in the master equation. %
We calculate average errors by randomly sampling over points in the state space.
We refer to our approach as training Economic Model Informed Neural Networks (EMINNs). 

Although our deep learning approach is relatively simple to describe at an abstract level, there are many intricate implementation details.
A particularly important detail is choosing how to sample the states on which we evaluate the master differential equation error.
This choice is less relevant for discrete time approaches where the economy is simulated to calculate expectations and also for other continuous time deep learning papers that don't have high dimensional distributions as states.
We consider three approaches for sampling the distribution.
The first is \emph{moment sampling} that draws selected moments of the distribution and then samples agent distributions that satisfy the drawn moments.
The second is \emph{mixed steady state sampling} that samples random mixtures of steady state distributions at different fixed aggregate states.
The third is \emph{ergodic sampling} that samples by regularly simulating the model economy based on the current estimate of the model solution.
We find that moment sampling is most effective for the finite-agent approximation, a combination of mixed steady state sampling and ergodic sampling is most effective for the discrete state space approximation, and a combination of moment sampling and ergodic sampling is most effective for the projection approach.
Deep learning techniques are sometimes referred to as ``breaking the curse of dimensionality'' but this is not really true in the sense that we cannot train a neural network by showing it a dense set of distributions.
Instead, a key ``art'' to training a neural network is to sample from an intelligently chosen subspace that helps the neural network to learn the equilibrium functional relationships in an economically interesting part of the state space.

In Section \ref{sec:krusell_smith}, we illustrate our techniques by solving a continuous time version of \cite{Krusell1998} using all three of our methods.
We take this model as a ``test case'' because there are well established approximate solutions using traditional techniques to which we can compare.
We show that our methods generate solutions with a low master equation error that produce very similar simulations to contemporary approaches such as \cite{fernandez2023financial}.
In addition, for the version of the model without aggregate shocks (the \cite{Aiyagari1994} model), we show that we can match the finite difference solution to high accuracy and, for the finite-agent approximation, solve for transition dynamics without a shooting algorithm.

Although all our methods are ultimately effective for solving the \cite{Krusell1998} model, we find they have different strengths and weaknesses.
One point of difference is how much information about the model solution is required for sampling.
We find that only the finite agent method can be solved without ergodic sampling, which makes its solution more robust away from the ergodic mean.
A second point of difference is how large and complicated the neural network needs to be.
The discrete state space requires a fine grid (approximately 200 points) while the finite agent method requires a moderate number of agents (approximately 40) and the projection method only requires 5 basis functions.
A third point of difference is the complication of the law motion for the parameters approximating the distribution.
For the finite agent method, this is simply the law of motion for agent idiosyncratic states.
However, for the discrete state method it is the law of motion for the mass between grid points and for the projection method it is the law of motion for projection coefficients, both of which are hard to compute.
How to balance these trade-offs depends upon how the distribution interacts with agent decision making and the complexity of the KFE.
Ultimately, our conclusions are:
\begin{enumerate}
    \item When only the mean of the distribution enters into the pricing equations, then the finite agent method with moment sampling is robust and powerful.
    \item When many aspects of the distribution are important for prices and the KFE does not involve complicated derivatives, then the discrete state method with mixed steady state and ergodic sampling is most successful. However, with complicated KFEs it is a difficult method.
    \item When it is clear how to customize the projection approach to the economic problem, then it can effectively solve the model with low dimensions.
\end{enumerate}

In Section \ref{sec:add_examples}, we solve two additional models that have more complicated master equations and illustrate the power of different methods: a model with heterogeneous firms (similar to a continuous time version of \cite{Khan2008}) and a spatial model (taken from \cite{bilal2021solving}).
We solve the firm model using a finite agent method and show that this technique copes well with the additional non-linearity that comes with pricing firm equity.
We solve the spatial model using a discrete set of locations (since finite agent and projection approximations infeasible) and show this technique is effective when different parts of the distribution affect different prices. \\

\noindent\emph{Literature Review}:
Our paper is part of the literature attempting to solve  heterogeneous agent continuous time (HACT) general equilibrium economic models with aggregate shocks.
Many researchers have shown that it can be useful to cast economic models in continuous time (e.g. \cite{achdou2022income}, \cite{Ahn2018}, \cite{kaplan2018monetary}).
Recent papers (e.g. \cite{bilal2021solving}) have made progress by characterizing equilibrium using the ``master equation'' approach developed in the mathematics literature (see \cite{LionsCDF}, \cite{Cardaliaguet2015}).
However, the continuous time literature has lacked a global solution technique for these models, particularly when written recursively in master equation form.
Instead, many of the existing techniques in the literature focus on perturbation in the distribution and other aggregate state variables.
Our main contribution is to offer a global solution to the master equation for HACT models with aggregate shocks.

Technically, our approach is part of a literature attempting to use deep learning to numerically characterize solutions to dynamic general equilibrium models with heterogeneous agents and aggregate shocks, which in the mathematics literature are mean-field games with common noise. 
There have been many papers attempting to solve discrete time versions of these models (e.g. \citet{azinovic2022deep}, \citet{han2021}, \citet{Maliar2021}, \citet{kahou2021}, \citet{bretscher2022ricardian}, \cite{germain2022deepsets}, \cite{azinovic2023economics}).
We are focused on deep learning for continuous time models, which has been the focus for the mathematics literature but less so for the economics deep learning literature.
For continuous time systems, there are two broad approaches for using deep learning: (i) a ``probabilistic'' approach that attempts to solve a system of stochastic differential equations and (ii) an ``analytic'' approach that attempts solve a system of partial differential equations.
Papers using the former approach follow \cite{han2018solving} and train the neural network to learn optimal controls across simulations of a discrete time approximation to the system of stochastic differential equations (e.g \cite{fouque2020deep}, \cite{min2021signatured}, 
\cite{Carmona2022},
\cite{germain2022numerical},
\cite{huang2023probabilistic}, \cite{huang2023breaking}, \cite{huang2024applications}).
This approach has many similarities  to the discrete time deep learning methods in economics, which also use simulations to train the optimal control problem.
Our paper follows the latter (i.e., analytic) approach, which involves minimizing the loss in the differential equation across randomly sampled points.
We view this as the ``true'' continuous time approach in the sense that it does not involve discretizing the time dimension in order to train the neural network.
To implement the analytic solution approach, we build on the DGM and PINN methods developed by \cite{Sirignano2018}, \citet{raissi2017physics}, and \cite{li2022} to solve problems in the physics and applied mathematics literatures.
The key difficulty we resolve is that none of the existing deep learning papers are directly applicable to solving our master equations.
Relative to the discrete-time and probabilistic approaches, we need to minimize loss in a differential equation, which requires developing new sampling approaches and resolving how to evaluate derivatives (rather than simulating to approximate expectations).
Relative to the DGM and PINN literatures, we have to work out how handle models with forward looking optimizing agents and market clearing conditions.
Some papers in the mean-field literature have attempted to address some these issues (e.g. \cite{al2022extensions,Carmona2021}) but they do not handle aggregate shocks (see e.g.~\cite{hu2022recent} for a recent survey).

The major challenge with using the analytic approach to solve master equations with a general state space is that we need to develop and sample from a finite dimensional representation of the distribution.
There are some existing papers in the mean field literature that work with low dimensional discrete state spaces (e.g. \cite{perrin2022generalization}, \cite{cohen2024deep}) and some papers in the economics literature that solve continuous time models without complicated distributions or with one-dimensional distribution approximations (e.g. \cite{duarte2024machine}, \cite{Gopalakrishna2021}, \cite{fernandez2023financial}, \citet{sauzet2021projection}, \cite{achdou2022simulating}, \cite{barnett2023deep}).
By contrast, we develop and compare a range of finite dimensional approximations: reducing to finite agents, discretizing the state space, and projection onto a finite set of basis functions.
An important contribution of our paper is to systematically explore the full set of distributional approximations and develop sampling approaches that are customized to those distributional approximations.

More generally, our technique (and other deep learning techniques) expand the traditional techniques in economic for solving heterogeneous agent models with aggregate shocks.
One traditional approach has been to fit a statistical approximation to the law of motion for the key aggregate state variables (e.g. \cite{Krusell1998}, \cite{DenHaan1997}, \cite{fernandez2023financial}).
As has been extensively discussed in the literature, this approach works well when the law of motion for the key state variables can be efficiently approximated as a function of key moments of the distribution (and so the economy is very close to permitting ``aggregation'').
By contrast, our approach can handle economies without near-aggregation results.
A second approach is to take a type of perturbation in the aggregate state and then solve the resulting problem with matrix algebra (e.g. \cite{Reiter2002}, \cite{Reiter2008}, \cite{Reiter2010}, \cite{Winberry2018}, \cite{Ahn2018}, \cite{auclert2021using}, \cite{bilal2021solving}, \citet{bhandari2023perturbational}).
By contrast, we solve the model globally and so can handle partial differential equations with global non-linearity.
A final approach is to take a low dimensional projection of the distribution (e.g. \cite{Prohl2017}, \cite{Schaab2020}). 
Our work is complementary to these papers in that it allows for more general, higher dimensional projections.\\

This paper is organised as follows. 
Section 2 describes the general economic environment that we will be studying and derives the master equation.
Section 3 describes the different finite dimensional approximations to the master equation.
Section 4 describes the solution approach.
Section 5 applies our algorithm to a continuous time version of \cite{Krusell1998}. Section 6 provides additional examples.
Section 7 concludes with practical lessons.

\section{Economic Model}\label{sec:genericModel}

In this section, we outline the class of economic models for which our techniques are appropriate. 
At a high level, in economics terminology, we are solving continuous time, general equilibrium models with a distribution of optimizing agents who face idiosyncratic and aggregate shocks.
In maths terminology, we are solving mean field games with common noise.

\subsection{Environment}\label{subsec:env}

\noindent\emph{Setting:} The model is in continuous time with infinite horizon. There is an exogenous one-dimensional\footnote{
    For ease of exposition, we restrict attention to models with one-dimensional aggregate shocks.
    The method is no different for the case with multi-dimensional aggregate shocks.
    }
aggregate state variable, $z_t$, which evolves according to:
\begin{align}
    dz_t ={}& \mu_z(z_t) dt + \sigma_z(z_t) dB_t^0, \quad z_0 \hbox{ given}, \label{eq:dstate_agg}
\end{align}
where $B_t^0$ denotes a common Brownian motion process. We let $\cF_t^0$ denote the filtration generated by $B_t^0$. \\

\noindent\emph{Agent Problem:} The economy is populated by a continuum of agents, indexed by $i \in I = [0,1]$. Each agent $i$ has an idiosyncratic state vector, $x^i \in \cX \subset \bR^{N_x}$, that follows:
\begin{align}
    dx_t^i ={}& \mu_x(c_t^i, x_t^i, z_t, q_t) dt + \sigma_{x}(c_t^i, x_t^i, z_t, q_t) dB_t^i + \varsigma_x(c_t^i, x_t^i, z_t, q_t) dJ_t^i, \quad x^i_0 \hbox{ given},  \label{eq:dstate_i}
\end{align}
where $c_t^i$ is a one-dimensional control variable chosen by the agent, $q_t \in \cQ$ is a collection of aggregate prices in the economy that will be determined endogenously in equilibrium, $B_t^i$ denotes an $N_B$-dimensional idiosyncratic Brownian motion process, $J_t^i$ denotes a 1-dimensional idiosyncratic Poisson jump process, $\mu_x(\cdot)$ is $N_x$--dimensional, $\sigma_x(\cdot)$ is an $N_x \times N_B$--dimensional matrix, and $\varsigma_x(\cdot)$ is $N_x$-dimensional.\footnote{
    We consider a one-dimensional control and $J_t^i$ process for notational simplicity.
    We do not consider the case where $dB_t^0$ directly impacts the evolution of idiosyncratic states. 
}
We let $\lambda(x,z)$ denote the rate at which Poisson jump shocks arrive given idiosyncratic state $x$ and aggregate state $z$.
We let $\cF_t^i$ denote the filtration generated by $B_t^i$, $J_t^i$, and $B_t^0$.

Each agent $i$ has a belief about the stochastic price process $\tilde{\mathsf{q}} = \{\tilde{q}_t : t \ge 0\}$ adapted to $\cF_t^0$. Given their belief, agent $i$ chooses their control process, $\mathsf{c}^i = \{ c_t^i : t \ge 0 \}$ adapted to $\cF_t^i$, to solve: 
\begin{align}
    V(x_0^i, z_0) ={}& \max_{\mathsf{c}^i \in \cC}\mathbb{E}_0 \left[ \int_0^{\infty} e^{-\rho t} u(c_t^i, z_t, q_t) dt \right]
    \ s.t. \ \eqref{eq:dstate_agg}, \ \eqref{eq:dstate_i}, \label{prob:agent_opt} 
\end{align}
where $\rho>0$ is a discount parameter, $u(c_t^i,z_t,g_t)$ is the flow benefit the agent gets and $\cC = \{ c_t^i \in \cC(x^i_t, z_t, q_t) : t \ge 0 \}$ is the set of admissible controls, where $\cC(x, z, q)$ denotes the set of possible actions for a player whose current state is $x$, when the aggregate state is $z$ and the prices are $q$.
This constraint set incorporates any ``financial frictions'' that restrict agent choices.
A classic example in economics is that $x_t^i$ represents agent wealth and the control must keep $x_t^i \ge \underline{x}$.
We assume that $u$ is increasing and concave in its first argument.
\\

\noindent\emph{Distributions and Markets:} Let $\cD(x_t^i|\cF_t^0)$ be the population distribution across $x_t^i$ at time $t$, for a given history $\cF_t^0$. 
We assume that $\cD(x_t^i|\cF_t^0)$ has a density $g_t$, where for continuous components of $x$ the density is with respect to the Lebesgue measure and for discrete components of $x$ the density is with respect to the counting measure.
This means we restrict attention to distributions without mass points in the continuous variables, which potentially requires us to ``smooth'' constraints in the model, as we discuss in Section \ref{subsec:master_eqn}. 
We further assume that $g_t \in \cG$, where $\cG$ is a subspace of the set of square integrable functions defined on $\cX$. 

We assume that the economy contains a collection of markets with price vector $q_t$ and market clearing conditions that allow us to solve for $q_t$ explicitly in terms of $z_t$ and $g_t$:%
\begin{align}
    q_t = Q(z_t, g_t), \quad \forall t \ge 0, \label{eq:market_clearing}
\end{align}
We assume that markets are incomplete in the sense that agents cannot trade claims directly (or indirectly) on their idiosyncratic shocks $dB_t^i$ and $dJ_t^i$. This means that the idiosyncratic shocks generate a non-degenerate cross sectional distribution of agent states. \\

\noindent\emph{Equilibrium:} 
Given an initial density $g_0$, an equilibrium for this economy consists of a collection of $\cF_t^0$--adapted stochastic processes, $\{\mathsf{g}, \mathsf{q}, \mathsf{z}\}$, and $\cF_t^i$--adapted stochastic processes, $\mathsf{c}^i$, for each $i \in I$, that satisfy the following conditions: (i) each agent's control process $\mathsf{c}^i$ solves problem \eqref{prob:agent_opt} given their belief that the price process is $\tilde{\mathsf{q}}$, (ii) the equilibrium prices $\mathsf{q}$ satisfy market clearing condition \eqref{eq:market_clearing}, and (iii) agent beliefs about the price process are consistent with the optimal behaviour of other agents in the sense that $\tilde{\mathsf{q}} = \mathsf{q}$.

\subsection{Recursive Representation of Equilibrium}\label{subsec:master_eqn}

We assume that there exists an equilibrium that is recursive in the aggregate state variables: $(z,g) \in \cZ \times \cG$.
In this case, beliefs about the price process can be characterized by beliefs about the evolution of the distribution $dg_t(x) = \tilde{\mu}_g(x, z_t, g_t) dt$ since the price vector $q$ can be expressed explicitly in terms of $(z,g)$. 
We assume that in this equilibrium, the value function of the agent takes the form $V: \cX\times\cZ\times\cG \rightarrow \bR$, $(x,z,g) \mapsto V(x,z,g)$, where $V$ is twice differentiable with respect to $x$ and $z$ and Frechet differentiable with respect to $g$.
\\ 

\noindent\emph{Hamilton Jacobi Bellman Equation (HJBE):}
In principle, the constraint $c_t^i \in \cC(x^i_t, z_t, q_t)$ restricts $x$ to a subspace $\cX^c \subset \cX$ and imposes the constraint on the recursive formulation:
\begin{align}
    \Psi(c,x,Q(z,g); \ V, D_x V, D_x^2 V) \ge 0, \qquad x \in \cX^c. \label{eq:Psi}
\end{align}
However, in order to help the neural network train the value function, throughout we replace the hard constraint on the set of admissible controls by a flow utility penalty $\psi(c,x,Q(z,g))$ that is larger the ``more'' that \eqref{eq:Psi} is violated (and where we have left the dependence on the value function implicit).
We provide explicit examples of $\psi$ in our applications in Section \ref{sec:krusell_smith} and Section \ref{sec:add_examples}.
Given a belief about the evolution of the distribution, $\tilde{\mu}_g$, the agent's value function, $V$, and optimal choice of control, $c$, jointly solve the HJBE: 
\begin{align}
    0 =\max_{c \in \cC(x,z,g)} \Big\{ {}&-\rho V(x,z,g) + u(c,z,Q(z,g)) + \psi(c,x,Q(z,g)) \\
    {}& + (\cL_x^{c;Q} + \cL_z + \cL_g^{\tilde{\mu}_g}) V(x,z,g) \Big\} \label{eq:HJBE-generic}
\end{align}
where the operators are defined as:
\begin{align}
    \cL_x^{c;Q} V(x,z,g) ={}& \sum_{j=1}^{N_x} \partial_{x_j} V(x,z,g) \mu_{x,j}(c,x,z,Q(z,g)) \\
    {}& + \frac{1}{2} \sum_{j,k=1}^{N_x} \partial_{x_j,x_k} V(x,z,g) \Sigma_{x,jk}(c,x,z,Q(z,g))  \\
    {}& + \lambda(x,z) \left( V(x + \varsigma_{x}(c,x,z,Q(z,g)),z,g) - V(x,z,g) \right) \\
    \cL_z V(x,z,g) ={}& \partial_z V (x,z,g) \mu_z(z) + \frac{1}{2}\partial_{zz} V(x,z,g) \left(\sigma_z(z)\right)^2  \\
    \cL_g^{\tilde{\mu}_g} V(x,z,g) ={}& \int_{\cX} D_g V(x,z,g)(y) \tilde{\mu}_g(y, z, g) dy %
\end{align}
where $D_g V(x,z,g): \cX \rightarrow \bR$, $y \mapsto D_g V(x,z,g)(y)$ is the kernel of the Riesz representation of the Frechet derivative of $V$ with respect to the distribution at $(x,z,g)$, $\Sigma_x(\cdot) := \sigma_{x}(\cdot) \left(\sigma_{x}(\cdot)\right)^\top $, $a_j$ denotes element $j$ of vector $a$ and $A_{jk}$ denotes element $(j,k)$ of matrix $A$.
The recursive optimal control, $c^\ast(x,z,g)$, is characterised by the first order condition:
$$0 = \partial_c \left( u(c,z,Q(z,g)) +  \psi (c,x,Q(z,g)) + \cL^{c;Q}_xV(x,z,g)\right)\big|_{c=c^*(x,z,g)}$$

\noindent\emph{Kolmogorov Forward Equation (KFE):}
Denote the recursive equilibrium optimal control of the individual agents by $c^*(x,z,g; \tilde{\mu}_g)$. Compared with the above notation, we add $\tilde{\mu}_g$ to stress the fact that the belief of the agent may differ from the true $\mu_g$.
In Appendix \ref{asec:genericModel}, we show that, for a given $z$ path, the evolution of the distribution under the optimal control can be characterized by the Kolmogorov Forward Equation (KFE):\footnote{
    Observe that there is no noise term in the KFE because $dB_t^0$ does not directly impact the evolution of idiosyncratic states.
}${}^{,}$\footnote{
    For notational convenience, we assume in equation~\eqref{eq:generic-KFE} that all variables are continuous. The same formula extends naturally to discrete variables if the integrals in the discrete dimensions are interpreted as sums and all derivatives in that dimension are set to zero.
}
\begin{align}
\label{eq:generic-KFE}
    dg_t(x) ={}& \mu_g(x, z_t, g_t; c^*, \tilde{\mu}_g)dt, \quad \text{where} \\
    \mu_g(x, z, g; c^*, \tilde{\mu}_g) 
    :={}& -\sum_{j=1}^{N_x} \partial_{x_j}\left[\mu_{x,j}(c^*(x,z,g; \tilde{\mu}_g), x, z, Q(z, g)) g(x)\right] \\
    {}& +  \frac{1}{2} \sum_{j,k=1}^{N_x} \partial_{x_j,x_k} \left[\Sigma_{x,jk}(c^*(x,z,g; \tilde{\mu}^g),x,z,Q(z,g)) g(x)\right] \\
    {}& + \lambda (g(x-\breve{\varsigma}(x,z,g; \tilde{\mu}_g))|I-D_x \breve{\varsigma}(x,z,g; \tilde{\mu}_g)| - g(x))
\end{align}
where $\breve{\varsigma}$ is defined so that $X = x - \breve{\varsigma}(x,z,g; \tilde{\mu}_g)$ is equivalent to $X + \varsigma(c^*(X,z,g; \tilde{\mu}_g),X,z,Q(z,g)) = x$.
Under this recursive characterization, the belief consistency condition becomes that $\mu_g = \tilde{\mu}_g$. \\

\noindent\emph{Master Equation:} 
We follow the approach of \cite{LionsCDF} and characterize the equilibrium in one PDE, which is often referred to as the ``master equation'' of the ``mean field game''.
This formulation is particularly convenient when the evolution of the economy is subject to aggregate shocks and the evolution of the aggregate state variables cannot be determined deterministically.
Conceptually, the master equation is related to the HJBE~\eqref{eq:HJBE-generic} but it imposes the belief consistency by putting the equilibrium KFE into the HJBE.
Formally, the equilibrium value function $V(x,z,g)$ is the solution to the following ``master equation'': 
\begin{align}
    0 = \cL V(x,z,g) ={}& -\rho V(x,z,g) + u(c^*(x,z,g),z,Q(z,g)) + \psi(c^*(x,z,g),x,Q(z,g))  \\
    {}&+ (\cL_x^{c^*(x,z,g);Q} + \cL_z + \cL_g^{\mu_g}) V(x,z,g) \label{eq:master}
\end{align}
where all operators are as defined above.
The mathematical challenge of working with the master equation is that it contains an infinite dimensional variable, $g$, and a derivative with respect to this variable.
This poses a collection of mathematical difficulties for the mean field game theory literature, which attempts to find conditions under which the infinite dimensional master equation is well defined and has a solution.
We refer to~\cite{Cardaliaguet2015,bensoussan2015master} for more details.
By contrast, we are focused on numerical approximation, which means that we need to find a finite dimensional approximation to the distribution, convert the master equation into a high, but finite, dimensional PDE, and develop techniques for solving the PDE.
The goal of our paper is use deep learning techniques to characterize such a finite dimensional numerical approximation without using a perturbation approach.

\subsection{Model Generality}

Our model set up nests many canonical models in macroeconomics such as continuous time versions of \cite{Krusell1998} and \cite{Khan2008}.
However, we have also made strong assumptions, which we discuss below:
\begin{enumerate}[label=(\roman*).]
    \item We have assumed that the prices, $q$, can be expressed explicitly in closed form as functions of the aggregate state variables $(z,g)$, in equation ~\eqref{eq:market_clearing}.
    In models with complicated asset pricing and market clearing conditions we need to solve an auxiliary fixed point problem in order to solve for the recursive representation of the price.
    In \cite{GopalakrishnaGuPayne2024}, some of the authors to this paper extend the methodology to resolve the challenges of introducing portfolio choice and long-term assets with complicated pricing.
    \item We have assumed that the distribution only enters the master equation through the pricing function.
    If the model had search and matching frictions, then the distribution would impact which agents are likely to be matched and so enter the master equation in a more complicated way.
    In \cite{payne2024DeepSAM}, some of the authors to this paper extend the methodology to resolve these challenges.
    \item We have assumed that the aggregate Brownian motion $B^0$ does not directly shock the evolution of idiosyncratic states.
    Instead it changes prices and so indirectly changes agent controls.
    This means that the KFE~\eqref{eq:generic-KFE} does not have an aggregate noise term $dB_t^0$.
    The solution approach easily generalizes to handle KFEs with aggregate noise if it does not lead to significantly more complicated market clearing conditions (a common extension studied is \cite{fernandez2023financial}).
    In \cite{GopalakrishnaGuPayne2024}, some of the authors to this paper consider such cases (and more complicated extensions with long-term asset pricing).
\end{enumerate}

Ultimately, the goal of this paper is not to offer a toolbox to solve every macroeconomic model because deep learning techniques need to be customized to particular master equations.
Instead, we explain and compare how to use different types of distribution approximations to solve continuous time models with heterogeneous agents.
This offers a foundational set of techniques that are being extended in other papers.

\section{Finite Dimensional Master Equation} \label{sec:finite_master}

In this section, we study finite dimensional approximations to the population distribution $g$.
Let $\hat{\varphi} \in \hat{\Phi} \subseteq \bR^{N}$ be a finite dimensional parameter vector, which could represent the collection of agents, the bins in a histogram, or the coefficients in a projection.
Let $\hat{G}$ be a mapping from parameters to distributions:
\begin{align}
    \hat{G}: \hat{\Phi} \to \cG, \quad \hat{\varphi} \mapsto  \hat{G}(\hat{\varphi}) = \hat{g}
\end{align}
We look for an approximation to the value function 
of the form $\hat{V} : \cX \times \cZ \times \hat{\Phi} \rightarrow \bR$, $(x,z,\hat{\varphi}) \mapsto \hat{V}(x,z,\hat{\varphi})$
that satisfies an approximate master equation:
\begin{align}
    0 = \hat{\cL} \hat{V}(x,z,\hat{\varphi}) :={}& -\rho \hat{V}(x,z,\hat{\varphi}) + u(\hat{c}^*(x,z,\hat{\varphi}),z,\hat{Q}(z,\hat{\varphi}))   + \psi(\hat{c}^*(x,z,\hat{\varphi}),x,\hat{Q}(z,\hat{\varphi}))\\ {}&+ (\cL_x^{\hat{c}^*(x,z,\hat{\varphi});\hat{Q}} + \cL_z + \hat{\cL}_g) \hat{V}(x,z,\hat{\varphi}), %
\end{align}
where $\hat{Q}(z,\hat{\varphi}) := Q(z,\hat{G}(\hat{\varphi}))$ and $\hat{c}^{*}$ is defined to satisfy:
\begin{align}
    0 = \partial_c \left( u(c,z,\hat{Q}(z,\hat{\varphi})) + \psi (c,x,\hat{Q}(z,\hat{\varphi})) + \cL^{c;\hat{Q}}_x V(x,z,\hat{\varphi}) \right)\big|_{c=\hat{c}^*(x,z,\hat{\varphi})} \label{eq:chat}
\end{align}
The operators $\cL_x^{c;Q}$ and $\cL_z$ are the same as before.
However, the operator $\hat{\cL}_g$, is different because it depends upon the way that the distribution is approximated.

We characterize three distribution approximation approaches.
The first approach approximates the distribution with a finite collection of agents.
The second approach approximates the continuous state space by a finite collection of grid points.
The third approach projects the distribution onto a finite set of basis functions.
In each case, we show how $\hat{G}$ and $\hat{\cL}_g$ are defined.
We then discuss the relationship to perturbation approaches.

\subsection{Finite Agent Approximation}

\noindent \emph{Distribution approximation:}
In this approach, we restrict the model so that the economy contains a large but finite number of agents $N < \infty$.
In this case, the finite dimensional parameter vector is the vector of agent positions: $\hat{\varphi}_t := \left( x_t^i \right)_{i \le N}$.
The mapping $\hat{G}$ takes the agents positions and computes the empirical measure: $\hat{G}(\hat{\varphi}) = \frac{1}{N} \sum_{i=1}^N \delta_{x_t^i}$
where $\delta_{x_t^i}$ denotes a Dirac mass at $x_t^i$.
The evolution of $\hat{\varphi}_t$ is simply the law of motion for each agents $x_t^i$, as described by equation \eqref{eq:dstate_i}. \\

\noindent\emph{The Operator $\hat{\cL}_g$:}
The market clearing condition now becomes $q_t = Q(z_t, \hat{\varphi}_t) = Q(z,\hat{G}(\hat{\varphi}_t))$, as described in the introduction.\footnote{
    In the examples we consider, it is straightforward to extend $Q$ to an empirical measure because $Q$ only depends upon moments of the distribution.
    For more complicated cases, we could fit a kernel to the empirical measure to approximate smooth density.
}
However, to maintain the price taking assumption in the finite agent model, we impose that agent $i$ behaves as if their individual actions do not influence prices.
Formally, this means that agent $i$ perceives the pricing function to be $q_t = Q(z_t, \hat{\varphi}_t^{-i})$,
where $\hat{\varphi}_t^{-i} = \{x_t^j \in N^{-i}\}$ is the position of the other agents $N^{-i} := \{j \le N: j \ne i\}$.
Ultimately, this will ensure that the neural network trains the policy rules as if the agents believe that their assets do not influence the market prices.
The approximate distribution impact operator $\hat{\cL}_g$ is given by:
\begin{align}
\begin{aligned} \label{eq:Lg_finite_agent}
    (\hat{\cL}_g \hat{V})(x^i,z,\hat{\varphi}) :={}& \sum_{j \ne i} D_{\hat{\varphi}^j} \hat{V}(x^i,z,\hat{\varphi}) \mu_{x}(\hat{c}^*(x^j,z,\hat{\varphi}),x^j,z,\hat{Q}(z,\hat{\varphi}^{-j})) \\
    {}& + \sum_{j \ne i} \frac{1}{2} \text{tr} \left\{ \Sigma_x(x^j,z,\hat{Q}(z,\hat{\varphi}^{-j})) D^2_{\hat{\varphi}^j} \hat{V}(x^i,z,\hat{\varphi}) \right\} \\
    {}& + \sum_{j \ne i} \lambda(x^j,z) \left( \hat{V}(x^i,z, \hat{\varphi}^{-i} + \Delta_j) - \hat{V}(x^i,z,\hat{\varphi}^{-i}) \right),
\end{aligned}
\end{align}
where for simplicity we have used the vector and trace notation (rather than the summation notation in Section \ref{sec:genericModel}) and where $D_{\hat{\varphi}^j} \hat{V}(x^i,z,\hat{\varphi})$ is the partial gradient of $\hat{V}$ with respect to the $j$-th point in $\hat{\varphi}$ (i.e., $x^j_t$), and $\Delta_k = 0$ for $k\neq j$ and $\Delta_k = \varsigma_{x}(x^k,z,\hat{Q}(z,\hat{\varphi}^{-k}))$ for $k=j$.\\

\noindent\emph{Convergence properties:}
The solution $V(x^i,z,\hat{\varphi})$ is expected to converge to the solution of the master equation~\eqref{eq:master} as the number of agents, $N$, grows to infinity so long $V$ is smooth. 
Intuitively, this relies on the idiosyncratic noise in the population distribution averaging out as the population becomes large, see e.g.~\cite{sznitman1991topics}. 
Such results have been extended to systems with equilibrium conditions by the mean field games literature; see e.g.~\cite{Cardaliaguet2015,lacker2020convergence,delarue2020master}.

\subsection{Discrete State Space Approximation}
\label{subsec:discrete-state-approx-master}

We consider $N$ points in the state space, denoted by $\xi_1, \dots, \xi_{N} \in \cX$. 
We approximate $g$ by a vector $\hat{\varphi} = (\hat{\varphi}_1, \dots, \hat{\varphi}_N) \in \mathbb{R}^{N}$, whose values represent the masses at $\xi_1, \dots, \xi_{N}$. 
The mapping $\hat{G}$ then takes the form: $\hat{G}(\hat{\varphi}) = \sum_{n=1}^{N}{\hat{\varphi}_{n}\delta_{\xi_n}}$,
where again $\delta_{\xi_n}$ denotes a Dirac mass at $\xi_n$.
Conceptually, the finite agent approximation fixes the mass associated to each $x_t^i$ and allows the $x_t^i$ values to move whereas the discrete state space approximation fixes the grid points $\xi_n$ and allows the masses at each grid point to change.

We denote by $\hat{\varphi}_t$ the vector at time $t$. Its evolution is given by an ordinary differential equation in dimension $N$ of the form:
\begin{equation}
    \label{eq:generic-KFE-finite-g}
    d\hat{\varphi}_t = \mu_{\hat{\varphi}}(z_t, \hat{\varphi}_t)dt
\end{equation}
describing the evolution of mass at $\xi_1, \dots, \xi_{N}$. 
The right-hand side needs to be obtained using information from the KFE~\eqref{eq:generic-KFE}. 
In our numerical examples we use a finite difference approximation to the KFE to derive $\mu_{\hat{\varphi}}$, analogous to the approximation described in \cite{achdou2022income}. 
However, the technique can be applied to other types of approximations, like the finite volume method used by \cite{huang2023probabilistic}.
In Appendix \ref{asec:master_equations}, we provide an explicit example for the \cite{Krusell1998} model using a finite difference scheme.
Here, we describe a generic approximation of the KFE equation~\eqref{eq:generic-KFE} using a finite difference scheme: 
\begin{align*}
 \mu_{\hat\varphi}(z,\hat{\varphi}) :={}& -\sum_{j=1}^{N_x} \hat{\partial}_{x_j}\left[\boldsymbol{\mu}_{x,j}(z,\hat{\varphi}) \odot \hat{\varphi}\right]  +  \frac{1}{2} \sum_{j,k=1}^{N_x} \hat{\partial}_{x_j,x_k} \left[\boldsymbol{\Sigma}_{x,jk}(z,\hat{\varphi}) \odot \hat{\varphi}\right] \\
 {}&+ \lambda (\Delta_{\breve{\boldsymbol{\varsigma}}(z,\hat{\varphi})}[\hat{\varphi}]\odot \boldsymbol{a}(z,\hat{\varphi}) - \hat{\varphi})
\end{align*}
where $\odot$ denotes the element-wise product of vectors (Hadamard product), and $\boldsymbol{\mu}_{x,j}(z,\hat{\varphi})$, $\boldsymbol{\Sigma}_{x,jk}(z,\hat{\varphi})$, $\breve{\boldsymbol{\varsigma}}_j(z,\hat{\varphi})$, $\boldsymbol{a}(z,\hat{\varphi}) \in \mathbb{R}^N$ and $\breve{\boldsymbol{\varsigma}}(z,\hat{\varphi})\in(\mathbb{R}^{N_x})^N$ are the vectors representing the values on the grid points:
\begin{align*}
    \boldsymbol{\mu}_{x,j}(z,\hat{\varphi}) &= \left(\mu_{x,j}(\hat{c}^*(\xi_n,z,\hat{\varphi}), \xi_n, z, \hat{Q}(z, \hat{\varphi})): n=1,...,N\right)\\ 
    \boldsymbol{\Sigma}_{x,jk}(z,\hat{\varphi}) &= \left(\Sigma_{x,jk}(\hat{c}^*(\xi_n,z,\hat{\varphi}), \xi_n, z, \hat{Q}(z, \hat{\varphi})): n=1,...,N\right)\\
    \breve{\boldsymbol{\varsigma}}_j(z,\hat{\varphi}) &= (\breve{\varsigma}_{j}(\xi_n,z,\hat{\varphi}):n=1,...,N)\\
    \breve{\boldsymbol{\varsigma}}(z,\hat{\varphi}) &= (\breve{\varsigma}(\xi_n,z,\hat{\varphi})):n=1,...,N)\\
    \boldsymbol{a}(z,\hat\varphi) &=(|I-((\hat{\partial}_{x_{k}}[\breve{\boldsymbol{\varsigma}}_{j}(z,\hat{\varphi})])_n)_{j,k=1,...,N_x}|:n=1,...,N).
\end{align*}
Informally, $\hat{\partial}_{x_j}$ and $\hat{\partial}_{x_j,x_k}$ takes the function values on the grid points and return finite difference approximations of the first derivative w.r.t. $x_j$ and the second derivative w.r.t. $(x_j, x_k)$.
Informally, $\Delta_{\breve{\boldsymbol{\varsigma}}}$ takes the function values on the grid points and returns interpolated values at shifted inputs $\xi_n - \breve{\boldsymbol{\varsigma}}_n$, which may not be on the grid.
More formally, 
$\hat{\partial}_{x_j}$, $\hat{\partial}_{x_j,x_k}$, and $\Delta_{\breve{\boldsymbol{\varsigma}}}$ are operators $\mathbb{R}^N \rightarrow \mathbb{R}^N$ that map vectors $\boldsymbol{f}\in\mathbb{R}^N$ of function values of a function $f$ on the grid (i.e., $\boldsymbol{f}_n=f(\xi_n)$) into vectors of the same size, such that, $(\hat{\partial}_{x_j}[\boldsymbol{f}])_n\approx \partial_{x_j}f(\xi_n)$, $(\hat{\partial}_{x_j,x_k}[\boldsymbol{f}])_n\approx \partial_{x_j,x_k}f(\xi_n)$, and $(\Delta_{\breve{\boldsymbol{\varsigma}}}[\boldsymbol{f}])_n \approx f(\xi_n - \breve{\boldsymbol{\varsigma}}_n)$.\footnote{
    The precise forms of $\hat{\partial}_{x_j}$, $\hat{\partial}_{x_j,x_k}$, and $\Delta_{\breve{\boldsymbol{\varsigma}}}$ are implementation-specific. In the most common arrangement, grid points are positioned on a rectangular grid, the operators $\hat{\partial}_{x_j}$ and $\hat{\partial}_{x_j,x_k}$ compute approximate derivatives at each point based on differences in function values to neighbor grid points, and the operator $\Delta_{\breve{\boldsymbol{\varsigma}}}$ implements linear interpolation on the regular grid.
    }\\

\noindent\emph{The Operator $\hat{\cL}_g$:} The approximate distribution impact operator $\hat{\cL}_g$ is defined by:
\begin{equation}
    \label{eq:def-hatcL-finite-state-generic}
        (\hat{\cL}_g \hat{V})(x,z,\hat{\varphi})= \sum_{n=1}^{N} \mu_{\hat{\varphi},n}(z,\hat{\varphi}) \, \partial_{\hat{\varphi}_n} \hat{V}(x,z,\hat{\varphi}),
\end{equation}
where $\mu_{\hat{\varphi},n}(z,\hat{\varphi})$ denotes the $n$-th coordinate of the $N$-dimensional vector $\mu_{\hat{\varphi}}(z,\hat{\varphi})$. \\

\noindent\emph{Convergence properties:}
Once again, we expect the solution of the approximate master equation $V(x,z,\hat{g})$ to converge towards the solution of the true master equation~\eqref{eq:master} so long as $V$ is smooth. Convergence of mean field games with discrete state spaces to mean field games with continuous state spaces has been proved by~\cite{bayraktar2018numerical,hadikhanloo2019finite} without noise or with idiosyncratic noise, and
by~\cite{bertucci2022mean} with common noise.

\subsection{Projection onto Basis\label{sec:finite_master:projection}}

\noindent\emph{Distribution approximation:}
In this approach, we represent the distribution $g_t$ by functions of the form:
$$\hat{g}_t(x) = \hat{G}(\hat{\varphi}_t)(x) := b_0(x) + \sum_{n=1}^{N}{\hat{\varphi}_{n,t}b_n(x)}, $$
where $\hat{\varphi}_t := (\hat{\varphi}_{1,t},...,\hat{\varphi}_{N,t})\in\hat{\Phi} := \mathbb{R}^N$ is a vector of real-valued coefficients and $b_0,b_1,...,b_N$ is a collection of linearly independent real-valued functions on $\mathcal{X}$ that satisfy
\begin{equation}\label{eq:basisProperties}
\int_{\mathcal{X}} b_{n}(x)dx=\begin{cases}
1, & n=0\\
0, & n\geq1
\end{cases}
\end{equation}
and we refer to as the basis of the projection. %
To complete this approximation description we need to specify the the law of motion for $\hat{\varphi}_t$ and the choice of basis.

As in the discrete state space approach, the evolution of $\hat{\varphi}_t$ is given by an ODE of the form~\eqref{eq:generic-KFE-finite-g} with drift $\mu_{\hat{\varphi}}$.
To derive $\mu_{\hat{\varphi}}$, we would like to use the KFE~\eqref{eq:generic-KFE} directly.
However, if we substitute our projection $\hat{g}_t$ into the KFE, then we a get a linear equation system:
\begin{align}
\sum_{n=1}^{N}{\mu_{\hat{\varphi},n,t}b_n(x)} =
\mu^g(\hat{c}_t^*,x,z_t,\hat{g}_t) \label{eq:kfe_projection}
\end{align}
for $\mu_{\hat{\varphi}}$ that has generically no solution for $N < \infty$ because we typically have an infinite number of equations (for each $x \in \cX$) but only $N$ degrees of freedom to solve these equations.
To make progress, we need to generate a finite set of equations to solve.
One option would be to discretize the $x$-dimension, as in the previous technique.
However, we instead find it helpful to work with a more general discretization to a set of $M \ge N$ ``test functions'' $\phi:\mathcal{X}\rightarrow \mathbb{R}$.
This is useful because it ultimately allows us to choose test functions that focus the approximation accuracy on pre-selected statistics of the distribution that are likely to be economically important. For example, if prices $q_t = Q(z_t, g_t)$ depend only on certain moments of the distribution such as the mean, it makes sense to choose test functions that represent these moments.
To make this idea precise, we start with the integral form of the KFE~\eqref{eq:generic-KFE} before integration by parts:
\begin{align*}
\frac{d\int_{\mathcal{X}}{\phi(x)g_t(x)dx}}{dt} &= \int_{\mathcal{X}}{\cL_x^{c^\ast(x,z_t,g_t);Q} \phi(x,z_t,g_t)g_t(x)dx} =: \mu_\phi(z_t,g_t;c^{\ast})
\end{align*}
that has to hold for all the $M$ test functions.
This variant of the KFE describes the time evolution of the statistics $\int_{\mathcal{X}}{\phi(x)g_t(x)dx}$ of the distribution.\footnote{We recover the original KFE~\eqref{eq:generic-KFE} for the Dirac delta distributions as test functions, i.e. $\phi=\delta_x$ for $x\in\cX$.} 
We define the coefficient drifts $\mu_{\hat{\varphi},n}(z,\hat{\varphi})$ (conditional on the aggregate state $(z,\hat{\varphi})$) as the regression coefficients that minimize, in the least-squares sense, the $M$ linear regression residuals\footnote{In practice, the integral terms appearing in the residual formula have to be computed either analytically (if possible) or by numerical quadrature.}
$$\varepsilon_m(z,\hat{\varphi}) := \mu_{\phi_m}(z,\hat{G}(\hat{\varphi});\hat{c}^{\ast}) - \sum_{n=1}^{N}{\mu_{\hat{\varphi},n}(z,\hat{\varphi})\int_X{\phi_m(x)b_n(x)dx}},\qquad m=1,...,M.$$
Ultimately, this means that our approximation is most accurate the on chosen statistics encoded in the test functions.

The approximation presented so far works, in principle, for any choice of basis. 
Here we propose a basis that approximately tracks the persistent dimensions of $g_t$ while neglecting those dimensions that mean-revert fast. 
These persistent dimensions of the distribution are related to certain eigenfunctions of the differential operator characterizing the KFE~\eqref{eq:generic-KFE}. Because this differential operator is generally time-dependent and stochastic, we first replace it by a time-invariant operator $\overline{\cL}^{KF}$. For example, this could be the steady-state operator $\cL^{KF,ss}$ that is defined as the KFE operator at the steady state, $g_t = g^{ss}$ in a simplified model without common noise.\footnote{
    Our chosen basis approximately tracks the persistent dimensions of $g_t$ if $\tilde{\cL}^{KF}$ is similar to full stochastic operator $\mathcal{L}^{KF}_t$. This is plausible for $\tilde{\cL}^{KF}=\cL^{KF,ss}$ because both operators share many similar features.
}
Let $\{b_i\,:\, i \ge 0\}$ be the set of eigenfunctions of $\overline{\cL}^{KF}$ with corresponding eigenvalues $\{\lambda_i \in \mathbb{C} \,:\, i \ge 0\}$. If the dynamics prescribed by the KFE are locally stable around the steady state $g^{ss}$, there is one eigenvalue $\lambda_0=0$ with eigenfunction $b_0 = g^{ss}$ and all remaining eigenvalues have negative real part, $\Re \lambda_i<0$. We pick as our basis $b_0,b_1,...,b_N$ the $N+1$ eigenfunctions corresponding to eigenvalues with real parts closest to zero as these represent the most persistent components. We provide further details on this basis choice in Appendix~\ref{sec:eigenfunction-basis}.\\

\noindent\emph{The Operator $\hat{\cL}_g$:}
The approximate distribution impact operator $\hat{\cL}_g$ is defined by:
$$(\hat{\cL}_g \hat{V})(x,z,\hat{\varphi})= \sum_{n=1}^{N} \mu_{\hat{\varphi},n}(z,\hat{\varphi}) \, \partial_{\hat{\varphi}_n} \hat{V}(x,z,\hat{\varphi}).$$

\noindent\emph{Convergence:} There are fewer results about convergence for projections in the continuous time mean-field-game literature.
However, in discrete time, \cite{Prohl2017} has proven convergence results for particular projections.

\subsection{Relationship to Continuous Time Perturbation Approaches}

All approaches offer a global characterization of the aggregate dynamics. This opens up the possibility of studying the full nonlinear dynamics but requires non-trivial approximations to the KFE to preserve the nonlinearity. 
Here, we briefly contrast our approaches to continuous time perturbation approaches that replace the nonlinear evolution of aggregate states by a simpler linear or quadratic one.
We compare to state space perturbations rather than sequence space perturbations since they are closer to our work.

\cite{Ahn2018} adapts and extends the perturbation approach of \cite{Reiter2002, reiter2009solving} to continuous-time settings. 
They first discretize the state space and then linearize the model with respect to aggregate dynamics.
This allows the authors to solve for equilibrium using conventional matrix algebra techniques.
In our setup, this is analogous to linearizing the finite-dimensional master equation that results from our discrete state space approximation and imposing their dimension reduction.

The perturbation method of \cite{bilal2021solving} works directly with the master equation formulation.
Relative to traditional techniques, they take linear or quadratic perturbations of the master equation with respect to the aggregate states $(z,g)$. 
The approximation by a linear or quadratic functional form reduces the complexity of the problem because one only needs to solve for the perturbation coefficients not for the value function on the infinite-dimensional state space of the full nonlinear problem.\footnote{
        Note that, when solving the remaining equation numerically, \cite{bilal2021solving} chooses a finite-difference method, which still requires a discretization of the state space akin to our discrete state space approach.
} 
By contrast we obtain a finite-dimensional master equation by means of approximating the distribution, which allows us to preserve the full nonlinearity of the problem.

\cite{alvarez2022analytic} use Fourier methods to analytically characterize impulse responses functions to shocks to the cross-sectional distribution $g_t$ in a model that features a linear KFE and a constant $z_t$-state.\footnote{\cite{alvarez2023price} employ similar methods in a nonlinear model but linearize dynamics first.} That approach is closely related to our projection method based on an eigenfunction basis. Specifically, if the KFE is truly linear, then the following proposition holds:
\begin{proposition}\label{prop:linear_kfe_eigenfunction_basis}
 If the KFE~\eqref{eq:generic-KFE} is of the form $dg_t = \mathcal{L}^{KF}g_t dt$ with a (constant) linear operator $\mathcal{L}^{KF}$, $b_0=g^{ss}$, $b_1$, ..., $b_N$ are eigenfunctions of $\cL^{KF}$ with eigenvalues $\lambda_0=0$, $\lambda_1$, ..., $\lambda_N$, and $g_0 - g^{ss} = \sum_{n=1}^{N}\varphi_{i,0} b_i \in \operatorname{span}\{b_1,...,b_N\}$, then $g_t$ satisfies equation~\eqref{eq:generic-KFE} for all $t\geq 0$ if and only if, for all  $t\geq 0$,
 \begin{equation}\label{eq:linear_kfe_eigenfunction_representation}
 g_t = g_t - g^{ss} = \sum_{i=1}^{N}{\varphi_{i,0} e^{\lambda_i t}b_i}.
 \end{equation}
\end{proposition}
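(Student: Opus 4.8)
The plan is to prove both implications by exhibiting the explicit trajectory on the right-hand side of~\eqref{eq:linear_kfe_eigenfunction_representation} and then closing the argument with uniqueness of solutions to the linear evolution equation. Throughout I would write $\tilde{g}_t := g_t - g^{ss}$ for the deviation from steady state. Since $b_0 = g^{ss}$ is an eigenfunction with eigenvalue $\lambda_0 = 0$, we have $\mathcal{L}^{KF} g^{ss} = 0$, so by linearity $g_t$ satisfies~\eqref{eq:generic-KFE} in the linear case (i.e. $\tfrac{d}{dt}g_t = \mathcal{L}^{KF} g_t$) if and only if $\tilde{g}_t$ satisfies $\tfrac{d}{dt}\tilde{g}_t = \mathcal{L}^{KF}\tilde{g}_t$; and under the standing hypothesis $\tilde{g}_0 = g_0 - g^{ss} = \sum_{i=1}^N \varphi_{i,0} b_i$. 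This reduction is the first step.

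Next I would verify the ``if'' direction by direct substitution. Set $h_t := \sum_{i=1}^N \varphi_{i,0} e^{\lambda_i t} b_i$. Because the sum is finite, term-by-term differentiation is legitimate and gives $\tfrac{d}{dt} h_t = \sum_{i=1}^N \varphi_{i,0} \lambda_i e^{\lambda_i t} b_i$, while linearity of $\mathcal{L}^{KF}$ together with $\mathcal{L}^{KF} b_i = \lambda_i b_i$ gives $\mathcal{L}^{KF} h_t = \sum_{i=1}^N \varphi_{i,0} e^{\lambda_i t} \lambda_i b_i$. These coincide, and $h_0 = \sum_{i=1}^N \varphi_{i,0} b_i = \tilde{g}_0$, so $g^{ss} + h_t$ is a solution of~\eqref{eq:generic-KFE} with the prescribed initial datum; this establishes~\eqref{eq:linear_kfe_eigenfunction_representation} $\Rightarrow$ (solves KFE).

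For the ``only if'' direction I would argue by uniqueness. Let $g_t$ be any solution of~\eqref{eq:generic-KFE} with the hypothesized $g_0$, and put $D_t := g_t - g^{ss} - h_t$. Then $D_0 = (g_0 - g^{ss}) - h_0 = 0$, and using $\mathcal{L}^{KF} g^{ss} = 0$ together with the computation from the previous step, $\tfrac{d}{dt} D_t = \mathcal{L}^{KF} g_t - \mathcal{L}^{KF} h_t = \mathcal{L}^{KF}\!\big(D_t + g^{ss} + h_t\big) - \mathcal{L}^{KF} h_t = \mathcal{L}^{KF} D_t$. Hence $D_t$ solves the linear equation with zero initial condition, so $D_t \equiv 0$, i.e. $g_t - g^{ss} = h_t$, which is~\eqref{eq:linear_kfe_eigenfunction_representation}. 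This argument also shows, for free, that $g_t$ stays in the affine subspace $g^{ss} + \operatorname{span}\{b_1,\dots,b_N\}$ for all $t$, so no separate invariance lemma is needed.

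The main obstacle is precisely the uniqueness claim invoked in the last step: $\mathcal{L}^{KF}$ is an unbounded differential operator on the infinite-dimensional space $\mathcal{G}$, so ``uniqueness of solutions'' is not automatic and must be pinned down — e.g. by assuming $\mathcal{L}^{KF}$ generates a strongly continuous (contraction) semigroup on $\mathcal{G}$, so that the Cauchy problem $\tfrac{d}{dt}D_t = \mathcal{L}^{KF} D_t$, $D_0 = 0$, has the unique solution $D_t \equiv 0$, or by restricting to a solution class in which a Gr\"onwall estimate $\tfrac{d}{dt}\|D_t\|^2 \le C \|D_t\|^2$ holds. I would simply state this well-posedness requirement explicitly (it is already implicit in the paper's assumption that the KFE admits a well-defined solution flow) and reference it rather than reprove semigroup generation. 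A secondary, purely cosmetic point is that the $\lambda_i$ and $b_i$ may be complex; since $g_t$ is real-valued, complex eigenvalues occur in conjugate pairs with conjugate eigenfunctions and conjugate coefficients, so the finite sum $h_t$ is automatically real — I would either note this or carry out the entire argument over $\mathbb{C}$ and observe it restricts to the real subspace.
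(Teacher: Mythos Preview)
Your proof is correct, and the ``if'' direction matches the paper's exactly (direct substitution using $\mathcal{L}^{KF}b_i=\lambda_i b_i$). For the ``only if'' direction, however, you take a genuinely different route. The paper expands $g_t$ in the \emph{full} infinite eigenbasis, $g_t = g^{ss}+\sum_{i\ge 1}\varphi_{i,t}b_i$, differentiates term by term, and compares coefficients (using linear independence of the $b_i$) to obtain the decoupled scalar ODEs $\dot\varphi_{i,t}=\lambda_i\varphi_{i,t}$; the hypothesis $g_0-g^{ss}\in\operatorname{span}\{b_1,\dots,b_N\}$ then forces $\varphi_{i,0}=0$ for $i>N$ and collapses the sum. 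Your argument instead constructs the difference $D_t=g_t-g^{ss}-h_t$, shows it solves the homogeneous linear problem with zero initial data, and invokes uniqueness.

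Each approach has a hidden assumption the other avoids: the paper's needs completeness of the eigenfunctions (so the expansion of $g_t$ exists and coefficients are well defined) plus justification of term-wise differentiation of an infinite series, while yours needs well-posedness of the Cauchy problem for the unbounded operator $\mathcal{L}^{KF}$, as you rightly flag. Your route is arguably the more economical one here because it uses only the finitely many eigenfunctions appearing in the statement and sidesteps any spectral completeness hypothesis; the paper's route, on the other hand, yields the general representation formula $g_t=g^{ss}+\sum_{i\ge 1}\varphi_{i,0}e^{\lambda_i t}b_i$ as a byproduct, which they use elsewhere to motivate the basis choice. Your remark on complex conjugate pairs is also a nice touch that the paper leaves implicit.
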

\begin{proof}
See Appendix \ref{sec:eigenfunction-basis}.
\end{proof}

This says that, if the KFE is linear and we use $\overline{\cL}^{KF} = \cL^{KF}$ to form the eigenfunction basis, then we are in the non-generic case in which we can approximate the KFE perfectly because the equation system~\eqref{eq:kfe_projection} has a solution even though there are only finite degrees of freedom.
The connection to the impulse response formula provided by \cite{alvarez2022analytic} is as follows. If we consider an initial shock that moves $g_0$ away from $g^{ss}$ in a way such that $g_0-g^{ss}$ is spanned by $b_1,...,b_N$, then equation~\eqref{eq:linear_kfe_eigenfunction_representation} tells us exactly how the distribution evolves over time. We can further integrate this equation with respect to any function $f:\cX\rightarrow \mathbb{R}$ to obtain a formula for the dynamics of the statistic $\int{f(x)g_t(x)dx}$ over time that resembles the impulse response representation of \cite{alvarez2022analytic}.\footnote{A difference to \cite{alvarez2022analytic} is that, in the context of their model, they know closed-form solutions for the eigenfunctions and they consider $N=\infty$, so that they can consider any initial distribution displacement $g_0-g^{ss}$.}

Our computational approach is different from Proposition~\ref{prop:linear_kfe_eigenfunction_basis} in that we do not presume a linear KFE and so allow for non-linear distributional dynamics. 
In this case, the KFE can no longer be approximated perfectly on a finite basis and so there are no closed-form expressions in the spirit of equation~\eqref{eq:linear_kfe_eigenfunction_representation}. 
Instead, we need to make additional choices about how to approximate the KFE, which leads to the more complicated procedure outlined in Section~\ref{sec:finite_master:projection}.%

\subsection{Comparison Between Our Approximation Approaches}

Table \ref{tab:comparion_distribution_approx} summarizes the key differences between the distribution approximations: a finite population, a discrete state space, and a projection onto a finite set of basis functions.

\begin{table}[ht]
\centering
\renewcommand{\arraystretch}{1.5}
\begin{tabularx}{\textwidth}{lXXX}
\hline
 & Finite Population & Discrete State & Projection \\
\hline\hline
Distribution approx. & $\frac{1}{N} \sum_{i=1}^N \delta_{x_t^i}$ 
& $\sum_{n=1}^{N}{\hat{\varphi}_{n,t}\delta_{\xi_n}}$  & $\sum_{n=0}^{N}{\hat{\varphi}_{n,t}b_n(x)}$ \\
\hline
KFE approx. & Evolution of other agents' states & Evolution of mass between discrete states & Evolution of projection coefficients \\
\hline
\end{tabularx}
\caption{Comparison of Distribution Approximations} \label{tab:comparion_distribution_approx}
\end{table}

\noindent We discuss how these approximations compare with regard to computational difficulties:
\begin{enumerate}
    \item Dimensionality ($N$): The approximation dimension needs to be large enough to capture sufficient shape in the distribution.
    The projection method can potentially have the lowest dimension if the choice of basis is efficient (e.g. $N=5$ in our \citet{Krusell1998} example).
    The finite population needs to be large enough to average out idiosyncratic noise (e.g. $N=40$).
    The discrete state space needs to be sufficiently fine to approximate the derivatives in the KFE, which means it needs a high dimension (e.g. $N=200$).
    \item Customization: For the finite agent approach, we just choose the number of agents, $N$.
    For the discrete state space approach, we choose a grid and a method for approximating the KFE on the grid points.
    For the projection method, we choose a set of basis function and a set of statistics on which to minimize the error.
    In this sense, the projection is potentially lower dimensional because we need to make more intricate choices in the setup.
    \item Computational ``bottlenecks'':
    Each method has its own computational bottlenecks.
    For the finite agent method, we need to switch agent positions in the neural network approximation to $V$ when we calculate the derivatives of $V$ with respect to the other agent positions in equation \eqref{eq:Lg_finite_agent}.
    For the discrete state space method, the dimensionality of the approximation is the main computational problem.
    For the projection method, determining the drift $\mu_{\hat{\varphi}}$ of the distribution approximation is computationally involved as we need to solve a linear regression problem and compute several integrals by quadrature for every single evaluation of the master equation.
\end{enumerate}

\section{Solution Method\label{sec:solution}}

All approaches in Section~\ref{sec:finite_master} lead to finite approximations to the density, $\hat{g}$, and the master equation operator $\hat{\cL}$.
However, the resulting master equations are high dimensional and so cannot be solved by traditional numerical techniques.
Instead, we represent the solution to the approximate master equation by a neural network and deploy tools from the ``deep learning'' literature to ``train'' the neural network to solve the approximate master equation.

\subsection{Neural Network Approximations\label{sec:solution:network}}

A neural network is a type of parametric functional approximation that is built by composing affine and non-linear functions in a chain or ``network'' structure (see \cite{goodfellow2016deep}).
We let $\hat{X}:=\{x, z, \hat{\varphi}\}$ denote the collection of inputs into the neural network representation of the value function.
We denote the neural network approximation to the value function by $\hat{V}(\hat{X}) \approx \bV(\hat{X}; \Theta)$, where $\Theta$ are the neural network parameters that depend upon the architecture.
There are many types of neural networks.
The simplest form is a ``feedforward'' neural network which is defined by: 
\begin{align}
\begin{aligned} \label{nn:structure}
    h^{(1)} ={}& \phi^{(1)}( W^{(1)} \hat{X} + b^{(1)}) &&& \ldots \text{Hidden layer 1}\\
    \vdots \\
    h^{(H)} ={}& \phi^{(H)}( W^{(H)} h^{(H-1)} + b^{(H)}) &&& \ldots \text{Hidden layer $H$} \\
    o ={}& W^{(H+1)} h^{(H)} + b^{(H+1)} &&& \ldots \text{Output layer} \\
    \bV ={}& \phi^{(H+1)}(o) &&& \ldots \text{Output}
\end{aligned}
\end{align}
where the $\{h^{(i)}\}_{i\le H}$ are vectors referred to as ``hidden layers'' in the neural network, $\{W^{(i)}\}_{i\le (H+1)}$ are matrices referred to as the ``weights'' in each layer, $\{b^{(i)}\}_{i\le (H+1)}$ are vectors referred to as the ``biases'' in each layer, $\{\phi^{(i)}\}_{i\le (H+1)}$ are non-linear functions applied element-wise to each affine transformation and referred to as ``activation functions'' for each layer.
The length of hidden layer, $h^{(i)}$, is defined as the number of neurons in the layer, which we refer to as $\#h^{(i)}$.
The total collection of parameters is denoted by $\Theta = \{ W^{(i)}, b^{(i)} \}_{i \le (H+1)}$.
The goal of deep learning is to train the parameters, $\Theta$, to make $\bV(\cdot; \Theta)$ a close approximation to $\hat{V}$.

The neural network defined in \eqref{nn:structure} is called a ``feedforward'' network because hidden layer $i$ cannot depend on hidden layers $j > i$.
This is in contrast to a ``recurrent'' neural network where any hidden layer can be a function of any other hidden layer.
It is called ``fully connected'' if all the entries in the weight matrices can be non-zero so each layer can use all the entries in the previous layer.
In this paper, we will consider a fully connected ``feedforward'' network to be the default network.
This is because these networks are the quickest to train and so we typically start by trying out this approach.
However, there are applications where we find that more complicated neural network architectures are useful.
In particular, we find that the type of recurrent neural network suggested by the Deep Galerkin Method in \cite{Sirignano2018} is helpful for discrete state and projection approximations.

\subsection{Solution Algorithm\label{sec:solution:algorithm}}

We train the neural network to learn parameters $\Theta$ that minimize the error in the master equation and boundary conditions.
We describe the key steps in Algorithm~\ref{alg:generic}.\footnote{
    The generic pseudo-code given in Algorithm~\ref{alg:generic} can be modified in practice. For example, instead of fixing a precision threshold, one can fix a number of iterations, and instead of using a fixed sequence of learning rates, one can use an adaptive method, such as Adam.}
Essentially, the algorithm samples random points in the discretized state space $\{x, z, \hat{\varphi}\}$, calculates the master equation error on those points under the current neural network approximation, and then updates the neural network parameters to decrease the master equation error.
In practice, our loss consists of two terms:  $\cE^e$ for the average mean squared master equation error and $\cE^s$, which is used to incorporate information about the shape of the solution (e.g., monotonicity or concavity). 
Specific examples of $\cE^s$ will be discussed in the following sections.
In the deep learning literature, this approach is sometimes referred to as ``unsupervised'' learning (e.g. \cite{azinovic2022deep}) because we do not have direct observations of the value function, $V(x,z,\hat{g})$, and instead have to learn it indirectly via the master equation.

\begin{algorithm}[ht] %
    
    \caption{Pseudo Code for Generic Solution Algorithm} \label{alg:generic}
    \SetKwInOut{Input}{Input}
    \SetKwInOut{Output}{Output}
    \Input{Initial neural network parameters $\Theta^0$, number of sample points $M$, positive weights $\kappa^e$ and $\kappa^s$ on the master equation errors; sequence of learning rates $\{\alpha_n : n \ge 0\}$, precision threshold $\epsilon$}
    \Output{A neural network approximation $(x, z, \hat{\varphi}) \mapsto \bV(x, z, \hat{\varphi}; \Theta)$ of the value function.}
    \setstretch{1.1}
    \vspace{0.2cm}
    \begin{algorithmic}[1]
    \STATE Initialize neural network object $\bV(x, z, \hat{\varphi}; \Theta^0)$ with parameters $\Theta^0$.
    \WHILE{Loss $>$ $\epsilon$, in iteration $n$}
        \STATE Generate $M$ new sample points, $S^n = \{(x_m, z_m, \hat{\varphi}_m)\}_{m \le M}$.
        \STATE Calculate the weighted average error:
            \begin{align}
                \cE(\Theta^n, S^n) = \kappa^e \cE^e(\Theta^n, S^n) 
                + \kappa^s \cE^s(\Theta^n, S^n)
            \end{align}
            where $\cE^e$ is the average mean square master equation error:
            \begin{align}
                \cE^e(\Theta^n, S^{n}) :={}& \frac{1}{|S^n|} \sum_{(x, z, \hat{\varphi})\in S^n} |(\hat{\cL}\bV(x, z, \hat{\varphi};\Theta^n)) |^2
            \end{align}
            in which the derivatives in the operator $\hat{\cL}$ are calculated using automatic differentiation
            and $\cE^s$ is a penalty for a ``wrong'' shape that depends upon the specific problem.
        \STATE Update the parameters using stochastic gradient descent:
            \begin{align*}
                \Theta^{n+1} = \Theta^n - \alpha_n D_{\Theta} \cE(\Theta^n, S^n)
            \end{align*}
            where $D_{\Theta} \cE$ is the gradient (i.e., vector differential) operator.
    \ENDWHILE
    \end{algorithmic}
\end{algorithm}

Although the algorithm is straightforward to describe at the high level, implementing the deep learning training scheme successfully involves a lot of complicated decisions.
We discuss these decisions in detail for solving the \cite{Krusell1998} in Sections \ref{subsec:ks:implementation} and \ref{asec:implementation_details}.
Here we discuss sampling, which an implementation detail that is particular challenging, and shape constraints.

\subsubsection{Sampling\label{sec:solution:algorithm:sampling}}

A very important aspect of our solution algorithm is the approach used to sample the set of training points $S^n$ in each iteration. 
Sampling ultimately has to be tailored to the specific application at hand.
This is a difference between deep learning for continuous time and discrete time techniques.
Discrete time models need to calculate expectations and so typically need to use simulation to approximate the expectation operator.
Continuous time models replace the expectation term in the Bellman equation by the derivative terms in the HJBE and then sample points on which to evaluate the HJBE.
This gives continuous time techniques more flexibility in how to sample but can also make the sampling task harder.
The following general considerations are relevant when deciding on how to sample.

We can sample the idiosyncratic state $x$, the aggregate exogenous state $z$, and the distribution state $\hat{\varphi}$ independently with different approaches. The separation between $x$ and $\hat{\varphi}$ deserves particular emphasis in the context of the finite agent approximation, for which also $\hat{\varphi}$ contains a sample of $x$-points (for the other agents). 
This is because we can focus the sampling on regions of the idiosyncratic state space with high curvature without having to increase the number of agents so that simulations have many agents in those regions.

Sampling $x$ and $z$ is less complicated because their dimension is usually relatively low.
For example, in macro models a typical dimension of $x$ is 2-3 and a typical dimension of $z$ is 1-5. For these variables, we typically sample from a pre-specified statistical distribution such as the uniform or normal. The sampling can be refined by using ``active sampling'' (see, e.g., \cite{Gopalakrishna2021} and \cite{lu2021deepxde}), which adapts the sampling during training to actively learn in regions where the algorithm is having trouble minimizing the loss function. This is achieved by regularly inspecting the losses during training and adding training points to regions with the largest losses.

Sampling the distribution approximation $\hat{\varphi}$ is significantly more complicated.
This is because it is typically high-dimensional and so we can only train the neural network on a very small subset of the total possible distributions.
In this sense, deep learning does not break the ``curse of dimensionality''.
Instead, it gives flexibility to train on a useful subspace that gives enough information to the value function for economically relevant distributions.
This means that choosing the right subspace on which to sample is very important for the algorithm to converge in reasonable time (or at all).
Ultimately, this requires us to use some information about the model solution in the sampling.
We have focused on the following three sampling schemes:
\begin{enumerate}[label=(\roman*)]
\item \emph{Moment sampling:} 
We first draw samples for selected moments of the distribution that are important for calculating prices $\hat{Q}(z,\hat{\varphi})$.
We then sample $\hat{\varphi}$ from a distribution that satisfies the moments drawn in the first step. 
E.g., in many macroeconomic models, the distribution mean is particularly important for calculating prices.
In this case, we would sample from the mean and then draw $\hat{\varphi}$ from a distribution with that mean.\footnote{
This final step could involve sampling from a pre-specified distribution (e.g. uniform) or from an ergodic distribution, as described in (iii).}
\item \emph{Mixed steady state sampling:} 
We first solve for the steady state for a collection of fixed aggregate states $z$. This needs to be done only once before training begins.
We then draw in each training step random mixtures of this collection of steady state distributions and then perturb with additional noise.\footnote{Without these perturbations, the random mixtures remain strictly confined to a subspace whose dimension (the number of steady states in the collection) is typically much smaller than $\dim{\hat{\Phi}}$.}
\item \emph{Ergodic sampling:} We adapt the training sample dynamically by regularly simulating the model economy based on the candidate solution for the value function from a previous iteration.
\end{enumerate}

Two additional issues arise in sampling schemes that adapt the training sample dynamically, such as active sampling (for $x$ and $z$) and ergodic sampling (for $\hat{\varphi}$). First, these schemes only adapt the sampling distribution in a meaningful way if the current guess for the value function is sufficiently good. It is therefore advisable to start with a pre-specified sampling distribution in early training and switch to a dynamic sampling scheme later on. Second, dynamically adapting the sample might lead to instability of training due to feedback effects between the training sample and the trained solution. We have found this issue to be particularly relevant for ergodic sampling. To mitigate the issue, we have found the following work well: (i) to use ergodic sampling only for a fraction of the training sample and (ii) to update training points by simulating over a small time interval starting from the end the last simulation rather than repeatedly taking very long simulations to approximate the ergodic distribution.\footnote{
    In this sense, despite the name ``ergodic sampling'', we do not insist on drawing training points from the ergodic distribution implied by the current value function candidate. Instead, the ergodic distribution is reached only gradually once the value function is close to convergence.}

At a high level, we find the following sampling strategies are useful for the different types of distribution approximations.
For the finite agent approximation, we find moment sampling to be simple and effective because the $\hat{\varphi}$ variables have a natural interpretation as the idiosyncratic ($x$) states of the other agents in the population and so it is straightforward to determine a region of ``typical'' values to sample from.
For the discrete state space approximation, we find the most stable approach is to start with mixed steady state sampling and move to ergodic sampling once the neural network starts to converge.
For the projection approximation, we find that a combination of moment sampling and ergodic sampling is effective. To put moment sampling to work with projections requires a rotation of the basis functions so as to isolate the components that correspond to the selected moments, see Appendix~\ref{asec:master_equations} for details.
We discuss all three sampling strategies in detail for our \cite{Krusell1998} example in Appendix~\ref{asec:implementation_details:sampling}.

\subsubsection{Constraining the Value Function Shape \label{sec:solution:algorithm:shape_constraints}}

We find that the neural network can converge to ``cheat solutions'' that approximately solve the differential equation by setting derivatives to zero.
One way of helping with this is to include terms in the loss function the penalize undesirable curvature of the value function to enforce the correct shape (e.g. penalizing non-monotonicity or non-concavity).
There are other, complementary potential ways of addressing this issue: (i) initializing the neural network to match an initial guess satisfying known properties of the value function so that when the neural network is trained to minimize the PDE loss, it converges to a (local) minimizer which has the same desired properties, (ii) sampling from a sufficiently large part of the state space that the neural network realizes there is curvature in all dimensions, and (iii) choosing an architecture which satisfies the constraints (or at least makes it easier for the neural network to satisfy these constraints). 
We find a combination of these approaches to be helpful.

\subsubsection{Extension: Boundary Conditions\label{sec:solution:algorithm:extensions}}

Algorithm~\ref{alg:generic} is suitable for solving a problem that does not include boundary conditions. 
This is sufficient for our purposes because we ``soften'' any hard constraints in the problem by replacing them with a utility flow penalty (see Section~\ref{subsec:master_eqn}).
In principle, the solution algorithm could be extended to a problem that does require boundary conditions. In this case, we would need to sample a second sample $S^b$ of training points on the boundary in step 3 and, in step 4, add an additional error term $\mathcal{E}^b$ (with corresponding weight $\kappa^b$) for the mean-squared error of boundary condition residuals evaluated at the training set $S^b$, as described by \cite{Sirignano2018}. All other aspects of the algorithm would remain unchanged.
That being said, while conceptually straightforward, we have found in the context of our examples that the inequality boundary conditions arising from hard constraints represent a significant difficulty for the robustness of our solution algorithm. 
Specifically, we observe that the neural network only learns an accurate solution if the weights on equation residuals ($\kappa^e$) and the boundary condition ($\kappa^b$) in the loss function are well-calibrated, which we find difficult to achieve for inequality constraints. 
Replacing the hard constraints by a soft penalty makes the method much more robust.

\subsubsection{Extension: Staggered updating\label{sec:solution:algorithm:extensions}}

In Algorithm~\ref{alg:generic}, when evaluating the master equation residuals to determine $\mathcal{E}^e$, we compute the policy $\hat{c}^\ast$ as a function of $\bV(\cdot;\Theta)$ according to equation~\eqref{eq:chat}. 
In principle, there is no need to parameterize the policy $\hat{c}^\ast$ with a separate neural network. However, in practice there are reasons for doing so, as suggested by \cite{duarte2024machine} and \cite{al2022extensions}.
First, if equation~\eqref{eq:chat} does not have a closed form solution, the computational cost of solving this equation numerically at each training point may be prohibitively high. In this case, we could include a separate neural network for $\hat{c}^\ast$ with separate parameters $\Theta^c$ and use this network in place of the true solution to equation~\eqref{eq:chat} to evaluate the master equation errors. 
This would necessitate adding steps into Algorithm~\ref{alg:generic} that involve updating the $\hat{c}^\ast$-network for a given $\bV$-network.
Second, even if equation~\eqref{eq:chat} can be solved, adding a separate neural network for $\hat{c}^\ast$ allows us to slow down the updating of the policy function akin to a ``Howard improvement algorithm''. To do so, we can make the updating of $\Theta^c$ infrequent, effectively fixing the same policy rule $\hat{c}^\ast$ for several iterations in the training of $\Theta$. In some implementations for our numerical example we use this variant of our baseline algorithm. We have found that this can help with stability, particularly when starting from a poor (e.g. random) initial guess.

\section{Example: Uninsurable Income Risk and TFP Shocks} \label{sec:krusell_smith}

\noindent A canonical macroeconomic model with heterogeneous agents and aggregate risk is \cite{Krusell1998}, which we refer to as the KS model.
In this section, we illustrate how our three solution approaches can be deployed to solve the continuous time version of the model described in \cite{achdou2022income} and \cite{Ahn2018}.

\subsection{Model Specification} \label{subsec:ks:model}

\noindent\emph{Setting:} 
There is a perishable consumption good and a durable capital stock with depreciation rate $\delta$.
The economy consists of a unit continuum $I=[0,1]$ of households and a representative firm.
The representative firm controls the production technology, which produces consumption goods according to the production function $Y_t = e^{z_t} K_t^{\alpha} L_t^{1-\alpha}$,
where $K_t$ is the capital rented at time $t$, $L_t$ is the labor hired at time $t$, and $z_t$ is the aggregate productivity, which follows an Ornstein-Uhlenbeck (OU) process $dz_t = \eta (\overline{z} - z_t) dt + \sigma dB_t^0$. \\

\noindent\emph{Heterogeneous households:}
Each household $i \in [0,1]$ has discount rate $\rho$ and gets flow utility $u(c_{t}^i) = (c_t^i)^{1-\gamma}/(1-\gamma)$ from consuming $c_{t}^i$ consumption goods at time $t$.
Each household has two idiosyncratic states $x_t^i = [a_t^i, l_t^i]$, where $a_t^i$ is the household's net wealth and $l_{t}^i \in \{l_1, l_2\}$ is the household's labor endowment, where $l_1 < l_2$ so $l_1$ is interpreted as unemployment and $l_2$ is interpreted as employment.
Labor endowments switch idiosyncratically between $l_1$ and $l_2$ at Poisson rate $\lambda(l_{t}^i)$.
Households choose consumption $c_t^i$ and their idiosyncratic state evolves according to:
\begin{align}
    dx_t^i ={}& d \begin{bmatrix} a_t^i \\ l_t^i \end{bmatrix} = \begin{bmatrix} s(a_t^i,l_t^i,c_t^i,r_t,w_t) \\ 0 \end{bmatrix} dt + \begin{bmatrix} 0 \\ \check{l}_t^i - l_t^i \end{bmatrix} dJ_t^i
\end{align}
where $r_t$ is the return on household wealth, $w_t$ is the wage rate, $\check{l}_t^i$ is the complement of $l_t^i$, $J_t^i$ is an idiosyncratic Poisson process with arrival rate $\lambda(l_t^i)$, and the agent's saving function is given by $s(a,l,c,r,w) = w l + r a - c$.
So, $g_t$ denotes the population density across $\{a_t^i, l_t^i\}$ at time $t$, given a filtration $\cF_t^0$ generated by the sequence of aggregate productivity shocks.\\

\noindent\emph{Assets, markets, and financial frictions:}
Each period, there are competitive markets for goods, capital rental, and labor.
We use goods as the numeraire.
We let $r_t$ denote the rental rate on capital, $w_t$ denote the wage rate on labor, and $q_t = [r_t, w_t]$ denote the price vector.
Given $g_t$ and $z_t$, firm optimization and market clearing imply that $r_t$ and $w_t$ solve: 
\begin{align}
    r_t {}&= \alpha e^{z_t}  K_t^{\alpha-1}L^{1-\alpha} - \delta, & w_t {}&= (1-\alpha)e^{z_t} K_t^\alpha L^{-\alpha}, \label{eq:market_clearing_agg} \\
    K_t {}&= \sum_{j \in \{1,2\}} \int_{\bR} a g_t(a, l_j) da & L {}&= \sum_{j \in \{1,2\}} \int_{\bR} l_j g_t(a, l_j) da.
\end{align}
where we assume that economy starts with the steady state labor distribution.
So, in the terminology of Section~\ref{sec:genericModel}, we can write the prices explicitly as functions of $(g_t, z_t)$:
\begin{align}
    q_t 
    = \begin{bmatrix} r_t \\ w_t \end{bmatrix} 
    = \begin{bmatrix} \alpha e^{z_t}  \left( \sum_{j \in \{1,2\}} \int_{\bR} a g_t(a, l_j) da\right)^{\alpha-1}L^{1-\alpha} - \delta \\ (1-\alpha)e^{z_t}\left( \sum_{j \in \{1,2\}} \int_{\bR} a g_t(a, l_j) da \right)^\alpha L^{-\alpha} \end{bmatrix}
    =: Q(g_t,z_t) \label{eq:Q}
\end{align}

Asset markets are incomplete so households cannot insure their idiosyncratic labor shocks.
Instead, households can trade claims to the aggregate capital stock in a competitive asset market.
The original \cite{Krusell1998} model imposes the ``borrowing constraint'' that each agent's net asset position, $a_{t}^i$, must satisfy $a_{t}^i \ge \underline{a}$, where $\underline{a}$ is an exogenous ``borrowing limit''.
This generates an inequality boundary constraint and a mass point, as discussed in \cite{achdou2022income}.
However, as discussed in Section \ref{subsec:master_eqn}, to help the neural network train more smoothly, we instead follow \cite{brzoza2015penalty} and introduce a penalty function $\psi$ at the left boundary, replacing the agent flow utility by $U(a_t,c_t) = u(c_t)+\textbf{1}_{a_t \leq a_{lb}}\psi(a_t)$.
The penalty function we use here is the quadratic function: $\psi(a)=-\frac{1}{2}\kappa(a-a_{lb})^2$ where $\kappa$ is a positive constant and $a_{lb} > \underline{a}$.\\

\noindent\emph{Master equation:}
Let $c^*((a, l), z, g)$ denote the equilibrium optimal household control.
Then, for the ABH model, the master equation \eqref{eq:master} becomes:
\begin{align}
    0 = \cL V((a,l),z,g) ={}& -\rho V((a,l),z,g) + u(c^*((a,l),z,g)) + \textbf{1}_{a \leq a_{lb}}\psi(a) \\
    {}& + (\cL_x + \cL_z + \cL_g) V((a,l),z,g) \label{eq:Master-KS}
\end{align}
where the operators become:
\begin{align}
    \cL_x V((a,l),z,g) ={}& \partial_a V((a, l), z, g)s((a,l),c^\ast((a,l),z,g),r(z,g),w(z,g))  \\
    {}& + \lambda(l)(V((a,\check{l}),z,g) - V((a,l),z,g)) \\
    \cL_z V((a,l),z,g) ={}& \partial_z V((a,l),z,g)\eta(\overline{z} - z) + \frac{1}{2} \sigma^2 \partial_{zz} V((a,l),z,g)  \\
    \cL_g V((a,l),z,g) ={}& \sum_{j \in \{1,2\}} \int_{\bR} D_{g_j} V((a,l),z,g)(b) \mu_g((b,l_j) z, g) db
\end{align}
where $D_{g_j}$ is the Frechet derivative with respect to the marginal density $g(\cdot,l_j)$ and the KFE is:
\begin{align}
    \mu_g((a,l),z,g) ={}& -\partial_a\left[ s\left( (a,l),c^\ast((a,l),z,g),r(z,g),w(z,g) \right) g(a,l)\right]\\
    {}& + \lambda(\check{l})g(a,\check{l}) - \lambda(l)g(a,l),
\end{align}
where $\check{l}$ denotes the complement of $l$,
where $r(z,g)$ and $w(z,g)$ solve the system of equations \eqref{eq:Q}, and the optimal consumption policy satisfies the first order optimality condition:
\begin{align}
    \partial_a V((a,l),z,g) = u'(c^\ast((a,l),z,g)).
\end{align}

In the next sections, we solve this master equation numerically using Algorithm~\ref{alg:generic}.
Because the optimal control is a function of the $\partial_a V((a,l),z,g)$, it will turn out to be more convenient to solve the master equation for the partial derivative, which we denote by $W((a,l),z,g) := \partial_a V((a,l),z,g)$.
The parameters that we use in numerical experiments are in Appendix~\ref{asec:implementation_details:parameters}.

\subsection{Implementation Details} \label{subsec:ks:implementation}

The implementation details for all methods are summarized in Table \ref{tab:comparion_implementation}.
Here we discuss key features about: the neural network structure, the sampling, the loss function, and the training.
We provide a more detailed description of the implementation in Appendix \ref{asec:implementation_details}.

\begin{table}[htbp]
\centering
\begin{small}
\renewcommand{\arraystretch}{1.0}
\begin{tabularx}{\textwidth}{lXXX}
\hline
 & Finite Population & Discrete State & Projection \\
\hline\hline
Neural Network & &  &  \\
\hline
(i) Structure & 
Fully connected feed-forward 
& Recurrent with embedding & Recurrent with embedding\\
(ii) Activation, $(\phi^{(i)})_{i\le H}$ & tanh & tanh & tanh \\
(ii) Output, $\phi^{(H+1)}$ & soft-plus & elu activation and factor $(a_0 + a)^{-\tilde{\eta}}$  & elu activation and factor $(a_0 + a)^{-\tilde{\eta}}$ \\
(iii) Layers, $H$ & 5 & recurrent: 3 \newline embedding: 2  & recurrent: 3 \newline embedding: 2 \\
(iv) Neurons, $\#|h|$ & 64 & recurrent: 100 \newline embedding: 128 & recurrent: 100 \newline embedding: 64 \\
(v) Initialization & $W(a,\cdot) = e^{-a}$ & random & random \\
(vi) Auxiliary networks & none & consumption & consumption\\
\hline
Sampling & &  &  \\
\hline
(i) $(a,l)$ & Active sampling $[a_{min}, a_{max}] \times \{ y_1, y_2\}$ & Uniform sampling $[a_{min}, a_{max}] \times \{ y_1, y_2\}$ & Uniform sampling $[a_{min}, a_{max}] \times \{ y_1, y_2\}$ \\
(ii) $(\hat{\varphi}_i)_{i \le N}$ & $\bullet$ Moment sampling: sample $r$ then random distribution of agents to generate $r$ & $\bullet$ Mixed steady-state sampling\newline $\bullet$ Ergodic sampling & $\bullet$ Moment sampling: sample $K$ then other coefficients uniformly\newline $\bullet$ Ergodic sampling \\
(iii) $z$ & $U[z_{min}, z_{max}]$ & $U[z_{min}, z_{max}]$ & $U[z_{min}, z_{max}]$ \\
\hline
Loss Function & &  &  \\
\hline
(i) Master equation & $\cE^e$ & $\cE^e$ & $\cE^e$ \\
(ii) Constraints & $\partial_a W(a,\cdot) < 0$ and $\partial_z W(a,\cdot) < 0$ & $\partial_a W(a,\cdot) < 0$ and $\partial_z W(a,\cdot) < 0$ & $\partial_a W(a,\cdot) < 0$ and $\partial_z W(a,\cdot) < 0$ \\
(iii) Weights & $\kappa^e = 100$, $\kappa^s = 1$ & $\kappa^e = \kappa^s =1$ & $\kappa^e = \kappa^s =1$ \\
\hline
Training & & & \\
\hline
(i) Learning rate & $10^{-4}$ & Decaying from \newline $3 \times 10^{-4}$ to $10^{-6}$ & Decaying from \newline $3 \times 10^{-4}$ to $10^{-6}$ \\
\hline
(ii) Optimizer & ADAM & ADAM & ADAM \\
\hline \hline
\end{tabularx}
\caption{Key Implementation Details} \label{tab:comparion_implementation}
\end{small}
\end{table}

First, consider the neural network structure. 
For the finite agent approximation, we use a ``plain vanilla'' fully connected feed-forward neural network with 5 layers and 64 neurons per layer. 
We choose a $\tanh$ activation function in all hidden layers and a softplus activation in the output layer to ensure that the output is always positive. 
For the discrete state and projection methods, we use an architecture that
feeds a fully connected feed-forward network (referred to as ``embedding'') into  the one proposed by~\cite{Sirignano2018} (referred to as ``recurrent'').
The former network is used to preprocess the distribution state $\hat{\varphi}$ before the result is passed to the main (recurrent) network.
The embedding portion has an output dimension of 10, 2 layers, and 128 neurons per layer for the discrete state space method and 64 neurons per layer for the projection method.
The recurrent portion of the network has 3 layers and 100 neurons per layer. 
We use a tanh activation function in all hidden layers and apply an elu activation in the last layer of the recurrent network to ensure positivity of the output. 
In addition, we multiply the neural network output by the factor $(a_{0}+a)^{-\tilde{\eta}}$, where $a_0,\tilde{\eta}\geq 0$ are non-trainable shape parameters. 
The additional factor aids training by reducing the amount of curvature in the marginal value function that must be captured by the neural network. 
Finally, in both the discrete state and the projection method, we use the variant of our solution algorithm with staggered updating of the consumption function and parameterize the latter by a separate auxiliary neural network that has the same structure as the one for the marginal value function with the exception that we do not multiply the network output by $(a_{0}+a)^{-\tilde{\eta}}$ in the output layer.

Second, consider the sampling.
We find moment sampling is sufficient for the finite agent method but not for the other techniques.
For the discrete state method, our main approach is ergodic sampling. 
As discussed in Section~\ref{sec:solution:algorithm:sampling}, this sampling scheme is only meaningful once a sufficiently good guess of the value function exists. 
We therefore start with mixed steady state sampling in early training and gradually increase the fraction of the training set taken from the ergodic sample.
For the projection method, we use a variant of moment sampling to sample capital stock $K$ from a uniform distribution. 
To be able to do this, we rotate the basis, such that the first basis vector points in the direction of increasing first moments (in the $a$-dimension) and all other basis vectors are orthogonal to the first. 
This rotation leaves the space of approximate distributions unaffected but leads to a one-to-one relationship between the aggregate capital stock and the first component of the distribution state $\hat{\varphi}$. 
We explain this in detail in Appendix~\ref{asec:master_equations}.
For the remaining components of $\hat{\varphi}$ we use a combination of uniform and ergodic sampling and gradually increase the fraction of the training set taken from the ergodic sample.

Third, consider the loss function.
In all cases, we impose concavity constraints on $V$, which are equivalent to monotonicity constraints on $W$.
In addition, we find it is very helpful to impose a sign constraint on $\partial_{z} W$.
This is because the KS model has limited curvature with respect to $z$ and so the neural network tends to find approximate solutions where the $z$ component is ignored.

Finally, consider the neural network training. In all cases, we choose an ADAM optimizer to perform the gradient descent steps. We fix the learning rate at $10^{-4}$ throughout for the finite agent approach, whereas we use a learning rate schedule with decaying learning rate (from $3\times 10^{-4}$ to $10^{-6}$) for the remaining approaches.

\subsection{Results\label{subsec:ks:stochastic_z}}

We solve the Krusell-Smith model using all three methods.
We first discuss the accuracy of the results for the full model.
We then discuss the model without aggregate shocks and possible ways to use neural network training for calibration.

\subsubsection{Mean Reverting TFP Process\label{subsec:ks:stochastic_z}}

For each approach, the error in the neural network approximation to the master equation is shown in Table \ref{tab:ks_loss} below.
Evidently all approaches have master equation losses to the approximate order of $10^{-5}$, which we interpret as convergence. In Appendix~\ref{sec:robustness} we show that all three methods are robust across different training runs.

\begin{table}[H]
    \renewcommand{\arraystretch}{1.0}
    \centering
    \begin{tabular}{lc}
     & Master equation training loss \\
     \hline
     Finite Agent NN & $3.037\times 10^{-5}$ \\
     Discrete State Space  NN & $9.639\times 10^{-5}$ \\
     Projection  NN & $8.506\times 10^{-6}$ \\
    \end{tabular}
    \caption{
    \small
    Neural Networks' final losses for solving the KS master equation. 
    }
    \label{tab:ks_loss}
\end{table}

In order to sense check our solution, we compare output from our neural networks to output from traditional approaches.
Unfortunately, we do not have a clear benchmark solution because there is no traditional technique that provides an arbitrarily precise solution to the model with aggregate shocks.
We choose to compare to the recent approach suggested by \cite{fernandez2023financial}, which uses a neural network to approximate a statistical law of motion but solves the Master equation using a finite difference scheme.
It is understood that this technique is good approximation for the KS model.
We make our comparison by computing sample paths from all of our solution approaches.
For the discrete state space and projection methods, we can generate sample paths by iterating the approximate equilibrium KFE.
For the finite agent method, computing the sample path is more complicated because we need to average over the transition matrices generated by many draws from the finite population.
We describe this in detail in Algorithm \ref{alg:transition_KS} in the Appendix \ref{subsec:KS_simulation}.

Figure \ref{fig:ks:fa:transition} offers a visual inspection of the difference between our neural network solutions and \cite{fernandez2023financial} for a random path of productivity shocks.
The upper-left panel shows the draw from the Ornstein-Uhlenbeck process: $dz_t = \eta(\overline{z}-z)dt + \sigma d B^0_t$.
The upper-right compares the evolution of capital stock.
The second row compares the evolution of prices.
The third and fourth rows compare the population distribution at various times in the simulation.
Evidently we get a similar path for all variables across all the methods.

In Figure~\ref{fig:ks:fan_chart_main}, we generate multiple random TFP paths, $z_t$, and show the evolution of our neural network solutions and the \cite{fernandez2023financial} solution in a ``fan chart'' that displays percentiles for the evolution of the population.
In particular, we generate 1000 TFP paths starting from $z_0 = 0$ and calculate the corresponding aggregate capital evolution paths. 
At each time $t$, for each solution, we compute the pth-percentile of the capital stock across simulation paths.
We then plot the time series of the percentiles for p equal to 10\%, 30\%, 50\%, 70\%, and 90\%.
Evidently, the finite agent and projection methods are a close match to the \cite{fernandez2023financial}.
The discrete state space method does a good job at central percentiles but has more difficulty at the extremes.
This reflects our general experience that the discrete state space approximation is the most difficult to work with for the KS model and relies most heavily on ergodic sampling, which decrease it accuracy away from the ergodic mean.

\begin{figure}[hbtp]
    \centering
    \includegraphics[width=1.\textwidth]{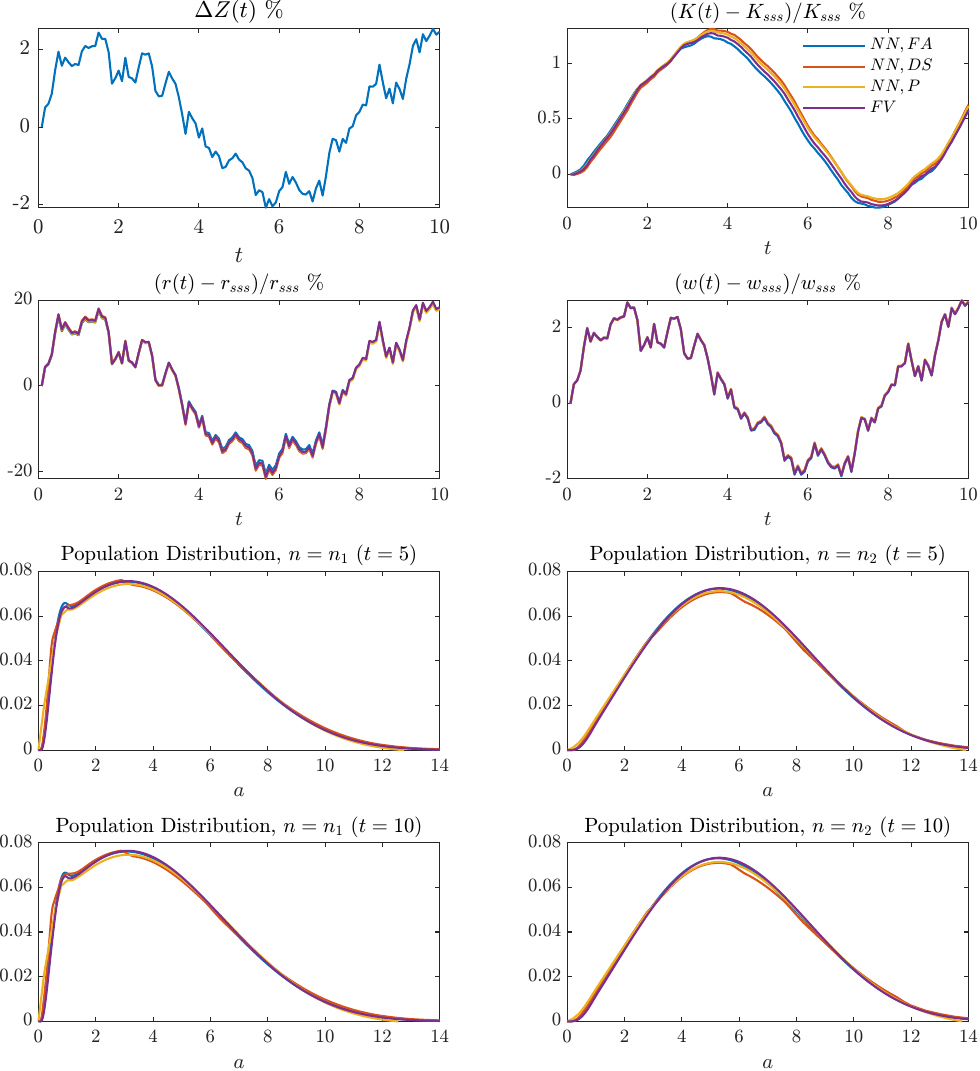}
    \caption{\small Simulations for the Krusell-Smith Model. The top left plot is the TFP shock path, the top right panel is the aggregate relative capital change.
    The second row left plot shows the relative change in the capital return and the second row right plot shows the relative change in the wage rate.
    The plots on rows three and four show the distribution at different times in the simulation.
    The labels \textit{``NN, FA''}, \textit{``NN, DS''}, and \textit{``NN, P''} refer to solutions from the finite agent, discrete state, and projection neural networks respectively. \emph{``FV''} refers to the solution from \cite{fernandez2023financial}. Subscript \textit{sss} refers to the stochastic steady state at $z=0$.}
    \label{fig:ks:fa:transition}
\end{figure}

\begin{figure}[ht]
    \centering
    \includegraphics[width = 1.\textwidth]{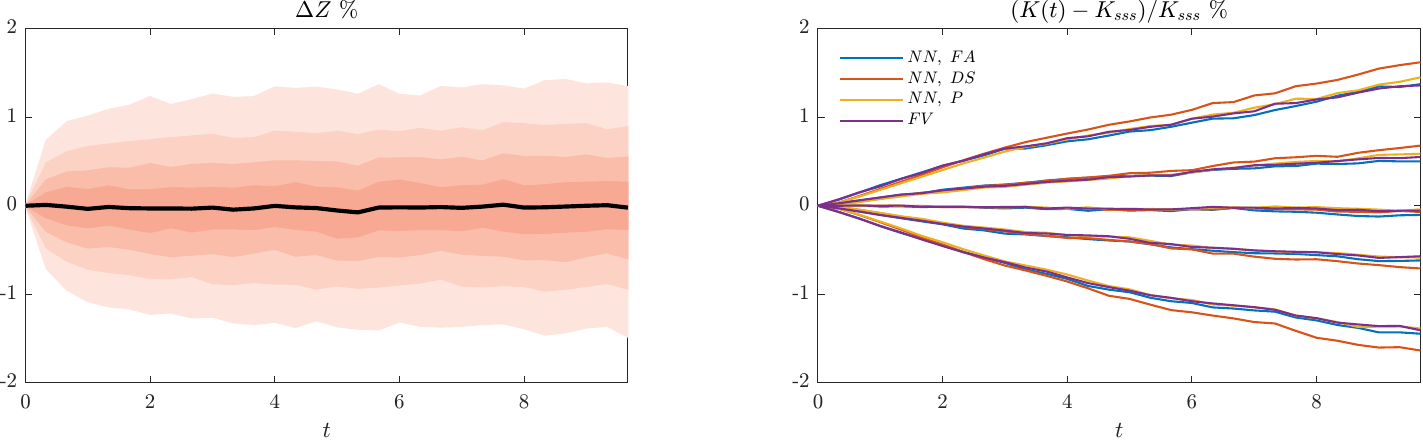}
    \caption{\small Forecasted aggregate capital dynamics starting from the stochastic steady state (\textit{sss}) for the Krusell-Smith Model. The left plot is the fan chart for the TFP shock path, generated from the OU process with initial condition $z_0 = 0$. The right panel is the time series plot for relative change in aggregate capital at percentiles 10\%, 30\%, 50\%, 70\%, 90\% (from the lowest to the highest). The labels \textit{``NN, FA''}, \textit{``NN, DS''}, and \textit{``NN, P''} refer to solutions from the finite agent, discrete state, and projection neural networks respectively. \emph{``FV''} refers to the solution from \cite{fernandez2023financial}.}
    \label{fig:ks:fan_chart_main}
\end{figure}

\subsubsection{Calibration}
As has been suggested by a number of papers (e.g. \cite{duarte2024machine}), a potential benefit of deep learning algorithms is that we can include the parameters, $\zeta$, as additional inputs into the neural network, $\hat{V}(\hat{X}, \zeta) \approx \bV(\hat{X}, \zeta; \Theta)$,
and then train the neural network using sampling from both $\hat{X}$ and $\zeta$.
In principle, this means that we can train the model once to get a solution across the parameter space and then use $\bV$ to calculate the moments for different parameters to calibrate the model.
In practice, this requires that the distribution approximation and sampling do not have high dependence on the solution.
This means that, in our example, the finite agent method is the natural candidate for testing this approach because it only requires moment sampling in the training.
In Appendix \ref{subsec:calibration}, we provide a simple example that calibrates the borrowing constraint to match a particular capital-to-labor ratio.
For the other methods this would be much more difficult because their training requires ergodic sampling.

\subsubsection{Fixed Aggregate Productivity} \label{subsec:ks:fixed_z}

For fixed aggregate productivity, $z_t = z$, our example model is the continuous time version of the Aiyagari-Bewley-Huggett (ABH) model discussed in \cite{achdou2022income}.
This model has a precise finite difference solution and so acts as a more detailed ``check'' for the accuracy of our solution technique.
Table \ref{tab:aiy_loss} plots the mean squared error (MSE) between our steady state solution and the finite difference solution.
Figure \ref{fig:aiy_fd_comp} plots the steady state consumption policy rule, value function derivative, probability density function (pdf), and cumulative distribution function (cdf) for the solutions from the finite agent, discrete state space, and finite difference methods.
It does not make sense to compare to the steady state for the projection method because the basis has been constructed from steady state solution.
Evidently, the neural network solutions align very closely to the finite difference solution.

\begin{table}[H]
    \centering
    \begin{tabular}{lcc}
     & Master equation loss & MSE(NN, FD)\\
     \hline
     Finite Agent NN & $3.135\times 10^{-5}$ & $4.758\times 10^{-5}$\\
     Discrete State Space NN &$9.303\times 10^{-6}$ &$6.591\times 10^{-5}$
    \end{tabular}
    \caption{\small Neural Networks' final losses for solving the ABH master equation. 
    Master equation loss is the mean squared error of residuals. 
    MSE(NN,FD) is the mean squared difference between steady state consumption from the neural network and finite difference solutions on a wealth grid.
    }
    \label{tab:aiy_loss}
\end{table}

\begin{figure}[h]
    \centering
    \includegraphics[width = 1.0\textwidth]{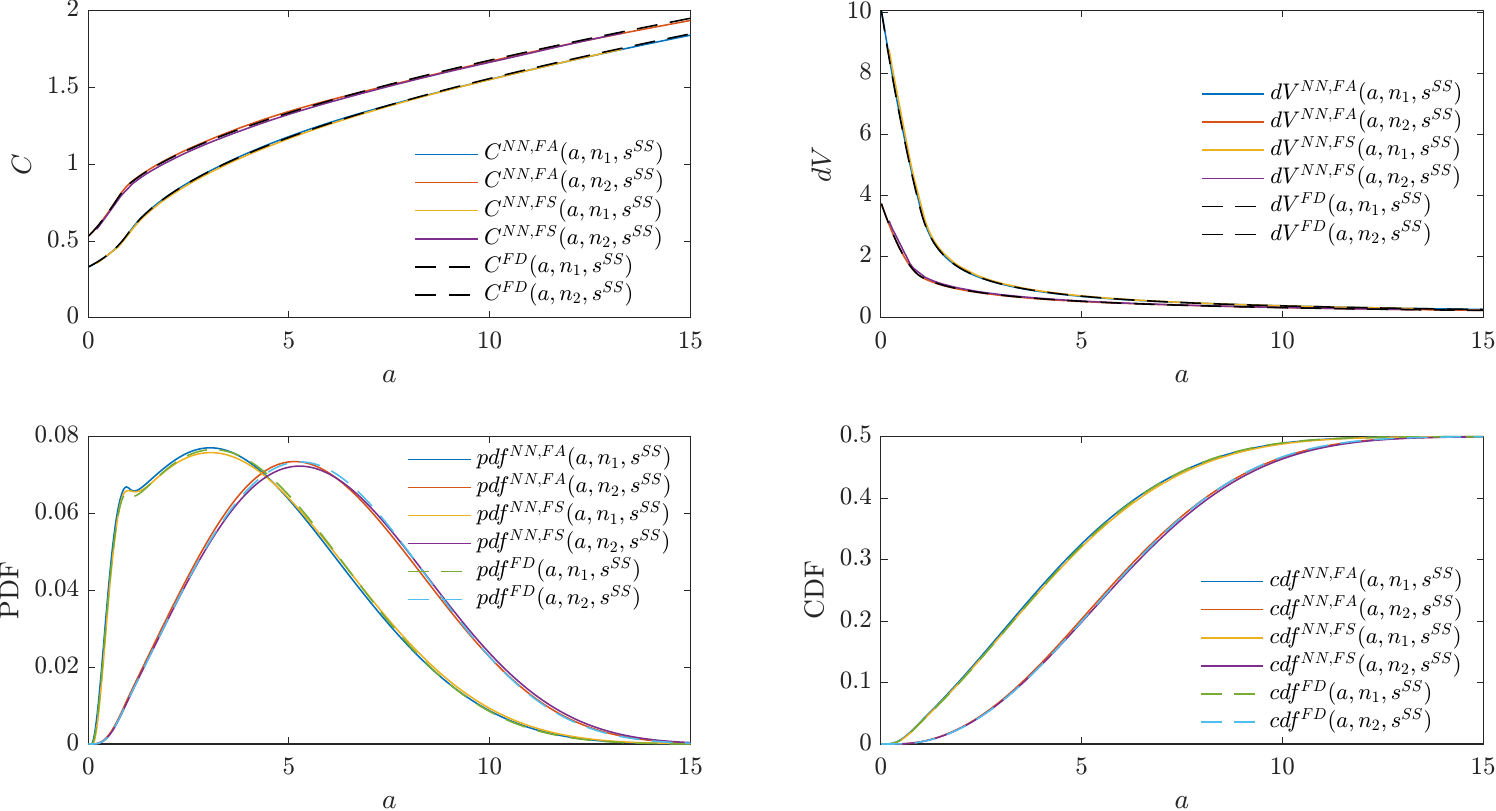}
    \caption{\small Comparison between the neural network and finite difference solutions for the Aiyagari model. The top left plot shows the consumption policy. The top right shows the derivative of the value function, the bottom left shows the pdf, and the bottom right shows the cdf. The labels \emph{``NN,FA''}, \emph{``NN,FS''}, and \emph{``FD''} refers to solutions from the finite agent neural network, the discrete state space neural network, and finite difference respectively.}
    \label{fig:aiy_fd_comp}
\end{figure}

We can also consider the transition path following an unexpected shock to aggregate
productivity (a so-called “MIT” shock). This makes little sense for the discrete state space approximation and the projection method because both rely on types of ergodic sampling and so have difficulty with unanticipated shocks.
However, we were able to train our finite agent approximation using moment sampling and so it makes sense to consider
how successfully the neural network approximation can handle transition paths.
We do the comparison in Appendix \ref{subsubsec:transition_dynamics_ABH} and find it is remarkably successful even for large shocks.

\subsection{Comparison of Techniques}

Although the different distribution approximation approaches can all be effective, we find they have different strengths and weaknesses for solving the KS model.
The finite-agent method is very robust in a number of ways: the neural network can be trained with the moment sampling procedure, the algorithm only requires a moderate number of agents (approximately 40), and we can successfully add parameters as auxiliary states so the model can be solved across the state and parameter space at the same time.

By contrast, we find the discrete-state method to be difficult to work with.
We believe many of the issues come from having to approximate the derivatives in the KFE on the discrete state space.
One challenge is that this requires a fine grid for agent wealth (approximately 200 grid points in our example).
A related challenge is that the training samples need to come from relatively smooth densities and so ergodic sampling is very important.
Ultimately, this makes training the KS model using the discrete state method slow and complicated.
In Section \ref{subsec:spatial}, we consider a spatial model with locations that are ex-ante different and agents that choose where to locate instead of having a consumption saving decision.
This means the model is effectively impossible to solve using the finite agent method but ends up being straightforward to solve when there are a discrete number of locations.
Follow up work in \cite{payne2024DeepSAM} extends our approach to search and matching models and also finds that the discrete state approximation is very effective for that class of models.
This offers suggestive evidence that the discrete state method is most helpful when the KFE does not contain complicated derivatives.

Finally, the projection method brings a different set of trade-offs.
Both the finite agent and discrete state methods feed relatively large state spaces into the neural network and then let the neural network work out how to structure the approximate value function.
By contrast, the projection method requires more choices ex-ante.
This allows us to work with a much lower dimensional approximation (approximately 5 basis functions) and to choose which statistics of the distribution we want to match most closely in our approximation.
For these reasons, we believe the projection method offers new advantages for the macroeconomics deep learning literature.

\section{Additional Examples} \label{sec:add_examples}

We close the paper by considering two additional examples: a model of firms with capital adjustment costs (similar to a continuous time version \cite{khan2007inventories}) and a dynamic spatial model (similar to the model solved using perturbation in \cite{bilal2021solving}).
We have chosen these examples to help understand the power of the different techniques.
The first model is well suited to a finite agent approximation while the second model can only be solved by using a discrete set of locations.

\subsection{Heterogeneous Firms With Adjustment Costs} \label{subsec:kahn_thomas}

\noindent\emph{Setting.} There is a perishable consumption good and durable capital stock.
The economy consists of a representative household and a continuum of firms $i \in [0,1]$ that invest in capital and hire labor.
There are competitive markets for goods, labor, firm equity and risk free bonds but capital is not tradable.
We use goods as the numeraire and denote the labor wage by $w_t$, the bond interest rate by $r_t$, and the price of equity in firm $i$ by $p_t^i$.
The representative household chooses consumption, $C_t$, labour supply, $L_t$, and the wealth invested in firm $i$ equity, $E_t^i$, to solve:
\begin{align}
    \max_{C_t, L_t, \{e^i\}} \mathbb E \int_0^\infty e^{-\rho t} U(C_t, L_t) dt, \quad \text{where} \quad U(C_t, L_t) = \frac{1}{1-\gamma} \left(C_t- \chi \frac{L_t^{1+\varphi}}{1+\varphi}\right)^{1-\gamma}
\end{align}
and where $\rho$ is the continuous time discount factor, $\gamma$ is the coefficient of relative risk aversion, $\chi$ determines the disutility of labor, $\varphi$ is the Frisch elasticity of labor supply, and the household is subject to the budget constraint:
\begin{align}
    {dA_t} = - C_t dt + w_tL_t dt + \left[\int_{i} E_{t}^i \left(\frac{\pi_{t}^idt + dp_{t}^i}{p_{t}^i}\right)di\right], \quad \text{where }\int_i E_t^i di =A_t.
\end{align}
where $\pi_t^i$ is firm profit.
So, following standard analysis, their stochastic discount factor (or ``state-price'') and labor supply are given by:
\begin{align}
    \Lambda_t ={}& \partial_C U(C_t, L_t) = \left( C_t - \chi \frac{L_t^{1+\varphi}}{1+\varphi} \right)^{-\gamma}, & L_t ={}& \left( \frac{w}{\chi} \right)^{1/\varphi}
\end{align}

\noindent\emph{Heterogeneous Firms:} Each firm $i\in[0,1]$ has a production function $e^{z_t} \epsilon_t^{i}(k_t^i)^\theta (l_t^i)^\nu$,
where $z_t$ is aggregate TFP following a mean reverting process $dz_t = \eta (\overline{z} - z_t) dt + \sigma dB_t^0$ and $\epsilon_t^i$ is the idiosyncratic shock taking values from $\{\epsilon_L,\epsilon_H\}$, with switching rate $\lambda_L,\lambda_H$ respectively. 
The firm can pay dividends or invest to accumulate capital but faces the adjustment cost $\psi(n,k) = \frac{\chi_1}{2}\frac{n^2}{k}$, where $n$ is investment.
So, each firm has idiosyncratic state $x_t^i = [k_t^i, \epsilon_t^i]$, which evolves according to:
\begin{align}
    dx_t^i ={}& d \begin{bmatrix} k_t^i \\ \epsilon_t^i \end{bmatrix} = \begin{bmatrix} n_t^i - \delta k_t^i \\ 0 \end{bmatrix} dt + \begin{bmatrix} 0 \\ \check{\epsilon}_t^i - \epsilon_t^i \end{bmatrix} dJ_t^i. \label{eq:firm:lomx}
\end{align}
where $J_t^i$ is the idiosyncratic Poisson process.
Taking the wage rate and the households SDF, $\Lambda_t$, as given, firm $i$ chooses labor, $l_t^i$, and investment, $n_t^i$, to maximize their price of equity:
\begin{align}
    \max_{l^i, n^i} \left\{ p_0^i = \bE_0 \int_0^{\infty} \frac{\Lambda_t}{\Lambda_0} e^{-\rho t} 
    \pi_t^i
    dt \right\} \quad s.t. \quad \eqref{eq:firm:lomx}
\end{align}
where $\pi_t^i := e^{z_t} \epsilon_t^{i}(k_t^i)^\theta (l_t^i)^\nu - w_t l_t^i - \left(n_t^i + \psi(n_t^i,k_t^i) \right)$ is firm profit.
So, for this model $g_t$ denotes the population density across $[k_t^i, \epsilon_t^i]$ at time t, given a filtration $\cF_t^0$ generated by the sequence of aggregate productivity shocks. \\

\noindent \emph{Market Clearing.} 
Market clearing for goods, labor, and firm equity implies: %
\begin{align}
    C_t ={}& \sum_{\epsilon=\epsilon_L,\epsilon_H}\int\left[
    e^{z_t} \epsilon_t(k_t)^\theta (l_t)^\nu - \left(n_t + \psi(n_t,k_t) \right)
    \right]g_t(k,\epsilon)dk \label{eq:firms:mc_c}  \\
    L_t ={}& \sum_{\epsilon=\epsilon_L,\epsilon_H}\int l_t(k,\epsilon)g_t(k,\epsilon)dk, 
    \quad E_t^i = p_t^i \label{eq:firms:mc_l}
\end{align}
So, referring back to our general notation, the price vector that can be expressed explicitly in terms of $(z,g)$ is $q_t = [w_t, \Lambda_t]$, which satisfies:
\begin{align}
    w_t ={}& \left( \sum_{\epsilon=\epsilon_L,\epsilon_H}\int\left(\frac{1}{\nu z_t\epsilon k^\theta}\right)^{\frac{\varphi}{\nu -1}}g_t(k,\epsilon)dk \right)^{\frac{1-\nu}{1+\varphi-\nu}}, \label{eq:firm:wage} \\
    \Lambda_t ={}& \left( \sum_{\epsilon=\epsilon_L,\epsilon_H}\int\left[
    e^{z} \epsilon(k)^\theta (l_t(k,\epsilon))^\nu - n_t(k,\epsilon) - \psi(n_t(k,\epsilon),k)
    \right]g_t(k,\epsilon)dk - \frac{ \chi\left(\frac{w_t}{\chi}\right)^{\frac{1+\varphi}{\varphi}}}{1+\varphi} \right)^{-\gamma} \label{eq:firm:lambda} 
\end{align}
The price of equity in firm $i$ is the value function of firm $i$, $p_t^i = V_t^i(k_t,\epsilon_t)$, which we need to solve for numerically. \\

\noindent \emph{Master equation.}
The aggregate states are $(z_t,g_t)$.
We let $V_t(k,\epsilon) = V(k,\epsilon, z_t,g_t)$ denote the value function in recursive form.

\begin{proposition}[Master Equation]\label{prop:master_eqn:firms}
Let $l^*((k,\epsilon),z,g)$ and $n^*((k,\epsilon),z,g)$ denote the optimal firm policy functions.
Then the master equation \eqref{eq:master} is given by:
\begin{align}
    0 = {}& -\rho V((k,\epsilon),z,g) + \Lambda(z,g) \pi_i(l^*((k,\epsilon),z,g),n^*((k,\epsilon),z,g),k,\epsilon,w(z,g)) \\
    {}& + (\cL_x + \cL_z + \cL_g) V((k,\epsilon),z,g) \label{eq:ME:firm_dynamics}
\end{align}
where the operators are defined as:
\begin{align}
    \cL_x V((k,\epsilon),z,g) ={}& \partial_k V((k,\epsilon),z,g)(n^*((k,\epsilon),z,g)-\delta k) \\
    {}& + \lambda(\epsilon) (V((k,\check{\epsilon}),z,g)-V((k,\epsilon),z,g)) \\
    \cL_z V((k,\epsilon),z,g) ={}& \partial_zV((k,\epsilon),z,g)\eta(\bar{z}-z)+\frac{1}{2} \partial_{zz} V ((k,\epsilon),z,g) \sigma_Z^2 \\
    \cL_g V((k,\epsilon),z,g) ={}& \sum_{j \in \{1,2\}} \int_{\bR} D_{g_j} V((k,\epsilon),z,g)(b) \mu_g((b,\epsilon_j) z, g) db
\end{align}
where $D_{g_j}$ is the Frechet derivative with respect to the marginal $g(\cdot,l_j)$
and the KFE is:
\begin{align}
    \mu_g((k,\epsilon),z,g) ={}& -\partial_k\left[ \left( n^*((k,\epsilon),z,g) - \delta k \right) g(k,\epsilon)\right] + \lambda(\check{\epsilon})g(k,\check{\epsilon}) - \lambda(\epsilon)g(k,\epsilon),
\end{align}
The optimal firm policies satisfy:
\begin{align}
    n^*((k,\epsilon),z,g) ={}&\frac{k}{\chi_1}(\partial_kV((k,\epsilon),z,g)-1), &
    l^*((k,\epsilon),z,g) ={}&\left(\frac{w}{\nu z \epsilon k^\theta}\right)^{\frac{1}{\nu -1}}
\end{align}

\end{proposition}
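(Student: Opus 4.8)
The plan is to derive the master equation \eqref{eq:ME:firm_dynamics} directly from the general master equation \eqref{eq:master} by specializing all the primitive objects to the firm model, and separately to verify the two first-order conditions characterizing $l^*$ and $n^*$. The key conceptual point is that, relative to the generic household model of Section~\ref{sec:genericModel}, the firm's ``flow payoff'' is not a standard utility $u(c,z,q)$ but the \emph{discounted} profit flow $\Lambda_t \pi_t^i$; however, because the representative household's SDF $\Lambda_t = \Lambda(z_t,g_t)$ is itself an explicit function of the aggregate state (by the market-clearing relations \eqref{eq:firm:lambda}), this fits the template once we identify the ``price'' vector as $q=[w,\Lambda]$ and the ``flow utility'' as $u(l,n,k,\epsilon,z,q) = \Lambda(z,g)\,\pi_i(l,n,k,\epsilon,w(z,g))$ with $\psi\equiv 0$ (there is no hard constraint here since the adjustment cost is already built into $\pi$).

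First I would write down the idiosyncratic state $x=(k,\epsilon)$ with law of motion \eqref{eq:firm:lomx}: the drift in the $k$-component is $\mu_k = n-\delta k$, there is no idiosyncratic Brownian term ($\sigma_x=0$, so $\Sigma_x=0$), and the Poisson jump sends $\epsilon \mapsto \check\epsilon$ at rate $\lambda(\epsilon)$ with $\varsigma_x$ acting only on the $\epsilon$-coordinate. Substituting these into the definitions of $\cL_x^{c;Q}$, $\cL_z$, $\cL_g^{\mu_g}$ from Section~\ref{subsec:master_eqn} immediately yields the three operators displayed in the statement: $\cL_x$ has only the first-derivative transport term in $k$ plus the two-state jump term, $\cL_z$ is the Ornstein--Uhlenbeck generator acting on $z$, and $\cL_g$ is the Frechet-derivative pairing, which splits into a sum over the two values of $\epsilon$ because the $\epsilon$-dimension is discrete. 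Second, specializing the generic KFE \eqref{eq:generic-KFE} with the same drift/jump data and zero diffusion gives $\mu_g((k,\epsilon),z,g) = -\partial_k[(n^*-\delta k)g(k,\epsilon)] + \lambda(\check\epsilon)g(k,\check\epsilon) - \lambda(\epsilon)g(k,\epsilon)$, exactly as claimed (the Jacobian term $|I - D_x\breve\varsigma|$ is trivial since the jump acts only on the discrete coordinate). Third, I would assemble these pieces into \eqref{eq:master}, replacing $u+\psi$ by $\Lambda(z,g)\pi_i(\cdot)$, to obtain \eqref{eq:ME:firm_dynamics}.

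For the policy functions: the labor choice $l^*$ is static — it maximizes $\pi_i = e^z\epsilon k^\theta l^\nu - w l - (n+\psi(n,k))$ pointwise in $l$, and the first-order condition $\nu e^z \epsilon k^\theta l^{\nu-1} = w$ rearranges to $l^* = (w/(\nu z\epsilon k^\theta))^{1/(\nu-1)}$. (I would note the slight notational shorthand in the statement, where $e^z$ is written $z$; this matches the market-clearing expressions \eqref{eq:firm:wage}--\eqref{eq:firm:lambda} used elsewhere.) The investment choice $n^*$ comes from the generic first-order condition \eqref{eq:chat}: here $\partial_n[u + \cL_x^{n;Q}V] = \Lambda(z,g)\,\partial_n\pi_i + \partial_k V \cdot 1 = \Lambda(z,g)(-1 - \partial_n\psi(n,k)) + \partial_k V$. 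Since the equilibrium value function is the \emph{un}normalized equity price $p = V$ (not $\Lambda V$), I would need to track the factor $\Lambda$ carefully; with $\psi(n,k) = \frac{\chi_1}{2}n^2/k$ we get $\partial_n\psi = \chi_1 n/k$, and setting the bracket to zero gives $\partial_k V = \Lambda^{-1}\Lambda(1 + \chi_1 n/k)$, i.e. $n^* = \frac{k}{\chi_1}(\partial_k V - 1)$ after cancelling the common $\Lambda$ factor. The main obstacle I anticipate is precisely this bookkeeping of the SDF $\Lambda$: one has to be consistent about whether the value being differentiated in the HJBE is the raw equity price or a $\Lambda$-discounted version of it, and the generic template in Section~\ref{subsec:master_eqn} was written for a plain utility flow. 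Verifying that the $\Lambda$ factors enter the objective multiplicatively (so they cancel in the FOC for $n$ but survive as $\Lambda(z,g)\pi_i$ in the master equation) is the one place where a careless derivation would go wrong; everything else is routine substitution into the formulas already established in Sections~\ref{subsec:master_eqn} and~\ref{sec:finite_master}.
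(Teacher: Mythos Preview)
Your route differs from the paper's. Rather than fitting the firm problem into the generic template of Section~\ref{sec:genericModel}, the paper derives the firm HJBE from first principles: it first solves the representative household's problem to obtain the SDF $\Lambda_t$ and the pricing relation $p^i_t=V^i_t=\mathbb{E}_t\int_t^\infty e^{-\rho(s-t)}\tfrac{\Lambda_s}{\Lambda_t}\pi^i_s\,ds$, and then runs a dynamic-programming argument by expanding $d(e^{-\rho t}\Lambda_t V_t)$ via It\^o's lemma, dividing by $\Lambda_t$, and taking the time increment to zero. This first-principles derivation produces the extra terms $\tfrac{\mu_\Lambda}{\Lambda}V$ and $\tfrac{1}{\Lambda}\partial_z\Lambda\,\partial_z V\,\sigma_z^2$ (from the drift and quadratic covariation of $\Lambda$) in one stroke; these are exactly the terms that appear in the computational master equation for $\breve V=V/\Lambda$ discussed immediately after the proposition.

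Your template approach with $u=\Lambda(z,g)\pi$ is essentially correct for the value $W:=\Lambda\cdot(\text{equity price})$, and does yield the clean equation displayed in the proposition. The first-order conditions also survive the $V$-versus-$W$ distinction: $\Lambda$ is independent of $k$, so $\partial_kW=\Lambda\,\partial_kV$ and the common factor cancels in the FOC for $n^*$, while $l^*$ is purely static. What your argument does not do is close the gap you yourself identify: passing from the template equation for $W$ to the equation actually used for computation (for the equity price $V=W/\Lambda$) requires one more It\^o step on the quotient, and that is where the $\mu_\Lambda/\Lambda$ and cross-variation terms enter. The paper's approach handles this in a single derivation; yours would need that additional change-of-variables computation to match what the paper ultimately takes to the computer.
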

\begin{proof}
    See Appendix \ref{asubsec:firms}.
\end{proof}

For computation, it is convenient to define the scaled value function $\breve{V} := V/\Lambda$.
Then, the new Master equation that we take to the computer is:
\begin{align}
    \breve{\cL}^h \breve{V} (k,\epsilon,z,g) = \cL^h \breve{V} (k,\epsilon,z,g) + \frac{\mu_{\Lambda}(z,g)}{\Lambda(z,g)} \breve{V}(k,\epsilon,z,g) + \partial_{z}\breve{V}(k,\epsilon,z,g)\partial_{z}\Lambda(z,g) \sigma_z^2
\end{align}
The main technical difference compared to the KS master equation is that the drift of the household SDF appears in the effective discount rate.
This introduces additional feedback which makes the master equation harder train. As a result, we start the training without $\mu^{\Lambda}$ and then introduce the term once the value function has started to converge.
We discuss the details in Appendix \ref{asubsec:firms}.
We train the model and get master equation loss of $7.408\times10^{-6}$.
In Figure \ref{fig:Kahn-Thomas-results}, we show two plots from the solution: the investment policy rule and the ergodic distribution for firms in the low and high state.

\begin{figure}[H]
    \centering
    \includegraphics[width = \textwidth]{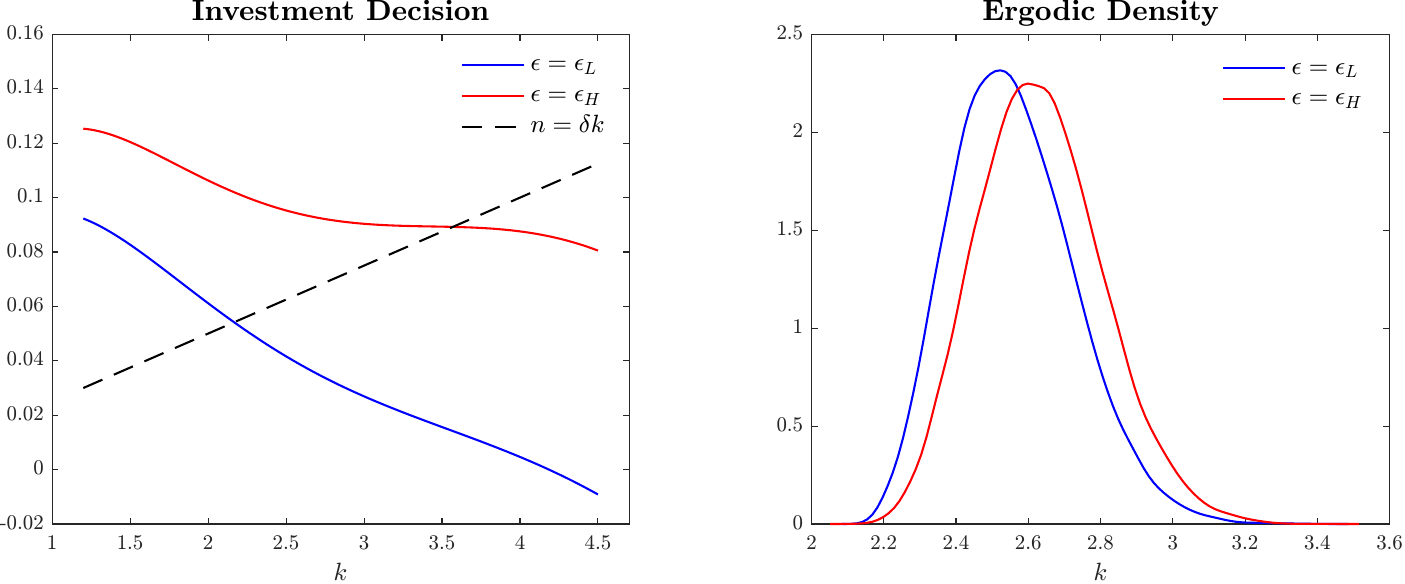}
    \caption{\small Illustration of model solution for the firm dynamics model. The left panel depicts the investment policy in the stochastic steady state and the depreciation line (dashed) as a function of firm's own capital level. The right panel depicts the (marginal) ergodic density for high and low productivity firms.%
    }
    \label{fig:Kahn-Thomas-results}
\end{figure}

\subsection{Dynamic Spatial Model} \label{subsec:spatial}

\noindent\emph{Setting.}\footnote{
    We describe here a streamlined version of the model specified in \cite{bilal2021solving} that does not include housing.
    These changes are without loss of generality in the sense that, up to a change of parameters, our resulting master equation is isomorphic to \cite{bilal2021solving}.} 
The economy consists of a finite set of locations $j\in \{1,...,J\}$, a continuum of workers $i \in [0,1]$, and a representative competitive firm at each location $j$ with an owner who consumes their profits.
At any given date $t$, each worker $i$ resides at a specific location $j$. 
Workers receive infrequent moving opportunities. There is a perishable consumption good, tradable across locations. 
The representative firm at location $j$ produces consumption goods according to the production function $Y_{j,t} = \exp(\beta_j  + \chi_j z_t) L_{j,t}^{1-\alpha}$, where $L_{j,t}$ is the labor hired from workers residing at location $j$. As in the KS model, $z_t$ is the aggregate productivity, which follows the same process as in Section~\ref{subsec:ks:model}. $\beta_j>0$ is a time-invariant location-specific productivity shifter and $\chi_j>0$ is a parameter that governs the sensitivity of production in location $j$ to variation in aggregate productivity $z_t$.\\

\noindent\emph{Heterogeneous workers:}
Each worker $i\in [0,1]$ has discount rate $\rho$ and gets flow utility $u(c_{t}^i) = (c_t^i)^{1-\gamma}/(1-\gamma)$ from consuming $c_{t}^i$ consumption goods at time $t$. 
The worker is endowed with one unit of labor, which they supply inelastically in the labor market at location $j_t^i$ where they currently reside. 
The worker has no access to financial markets and therefore consumes wage income each period, $c_t^i = w_{j_t^i,t}$, where $w_{j,t}$ denotes the wage at location $j$ and time $t$.
The worker's idiosycratic state $x_t^i = j_t^i$ consists solely of the location of residence. 
With arrival rate $\mu>0$, the worker receives an idiosyncratic opportunity to move location. 
Upon receiving such an opportunity, the worker draws idiosyncratic i.i.d.\ additive preference shocks for each potential destination $j^\prime$ according to a Gumbel distribution with mean $0$ and inverse scale parameter $\nu$ and then chooses one location $j^\prime$ as their new residence.
When moving from $j$ to $j^\prime$, the worker also incurs a moving disutility of $\tau_{j,j^\prime}\geq0$. The structure of this location choice problem implies that, after optimization, $j_t^i$ follows a, possibly time-inhomogeneous, continuous-time Markov chain with transition rate $\mu \pi_{j,j^\prime,t}$ from $j$ to $j^\prime$ where
$$\pi_{j,j^\prime,t} = \pi_{j,j^\prime}(V_t) := \frac{e^{\nu (V_{t}(j^\prime)-\tau_{j,j^\prime})}}{\sum_{k=1}^{J}e^{\nu (V_{t}(k)-\tau_{j,k})}}.$$
Here, $V_t(j)$ denotes the value function of a worker with idiosyncratic state $j$. The distribution $g_t$ is the population density across $\{j_t^i\}$ at time $t$, given a filtration $\cF_t^0$ generated by the sequence of aggregate productivity shocks.\\

\noindent\emph{Assets, markets, and financial frictions:}
Each period, there are competitive markets for goods and, in each location, for labor. We use goods as the numeraire. The price vector $q_t= [w_{j,t}: j=1,...,J]$ is given by the collection of all local wages. Given $g_t$ and $z_t$, firm optimization and market clearing imply that the wages $w_{j,t}$ solve: 
\begin{align}
     w_{j,t} {}&= w_j(z_t,g_t) := (1-\alpha)\exp(\beta_j  + \chi_j z_t)(g_t(j))^{-\alpha}.
\end{align}

\noindent \emph{Master equation.}
The aggregate states are $(z_t,g_t)$.
We let $V_t(j) = V(j,z_t,g_t)$ denote the value function in recursive form.

\begin{proposition}[Master Equation]\label{prop:spatial_model:master_equation}
The master equation \eqref{eq:master} is given by:\begin{align}
    0 = - \rho V(j,z,g) + u(w_j(z,g)) + (\cL_x + \cL_z + \cL_g) V(j,z,g) \label{eq:ME:spatial_model}
\end{align}
where the operators are defined as:
\begin{align*}
    \cL_x V(j,z,g) &= \mu\left(\frac{1}{\nu}\log \left(\sum_{j^\prime=1}^{J}e^{\nu(V(j^\prime,z,g)-\tau_{j,j^\prime})}\right)-V(j,z,g)\right)\\
    \cL_z V(j,z,g) &= \partial_z V(j,z,g)\eta(\overline{z} - z) + \frac{1}{2} \sigma^2 \partial_{zz} V(j,z,g)\\
    \cL_g V(j,z,g) &= \sum_{j^{\prime}=1}^J\partial_{g(j^{\prime})}V(j,z,g)\mu_{g}(j^\prime,z,g),
\end{align*}
the KFE is:
\begin{align}
   \mu_g(j, z, g) = \mu\left(\sum_{k=1}^J\pi_{k,j}(V(\cdot,z,g))g(k)-g(j)\right),
\end{align} 
and the conditional moving probabilities $\pi_{j^\prime\prime,j^\prime}$ are as defined previously. Up to a transformation of model parameters, this master equation is equivalent to the one in \citet{bilal2021solving}.
\end{proposition}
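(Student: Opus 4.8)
The plan is to specialize the general derivation of Section~\ref{subsec:master_eqn} to the spatial environment, in which the only idiosyncratic state is the discrete location $j$ and the only ``control'' is the destination choice exercised at Poisson moving times. First I would write down the worker's recursive problem: a worker currently residing at $j$ consumes $c=w_j(z,g)$, which is pinned down by the budget constraint and market clearing so that there is no interior optimization in the flow term and no need for the penalty $\psi$; the worker receives a moving opportunity at rate $\mu$, and upon receiving one she draws i.i.d.\ preference shocks $\{\epsilon_{j'}\}_{j'=1}^J$ and relocates to the destination $j'$ that maximizes $V(j',z,g)-\tau_{j,j'}+\epsilon_{j'}$. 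Collecting flow payoff, the jump term for moving opportunities, and the aggregate-state evolution over an interval $dt$ yields the Bellman relation
\begin{align*}
\rho V(j,z,g) = u(w_j(z,g)) + \mu\Big(\mathbb{E}\big[\textstyle\max_{j'}\{V(j',z,g)-\tau_{j,j'}+\epsilon_{j'}\}\big] - V(j,z,g)\Big) + \cL_z V(j,z,g) + \cL_g V(j,z,g),
\end{align*}
where $\cL_z$ is exactly the generic operator of \eqref{eq:HJBE-generic} evaluated for $dz_t=\eta(\bar z - z_t)dt + \sigma dB_t^0$, so that $\cL_z V = \partial_z V\,\eta(\bar z - z) + \tfrac12\sigma^2\partial_{zz}V$.

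The key computation is the expected maximum over Gumbel-perturbed continuation values. Using the standard discrete-choice identities for i.i.d.\ Gumbel shocks with inverse scale $\nu$ normalized to mean zero (so the location parameter equals $-\gamma_E/\nu$, which exactly cancels the Euler--Mascheroni term in the closed form for the expected maximum), I would show
\begin{align*}
\mathbb{E}\big[\textstyle\max_{j'}\{V(j',z,g)-\tau_{j,j'}+\epsilon_{j'}\}\big] = \frac{1}{\nu}\log\Big(\textstyle\sum_{j'=1}^J e^{\nu(V(j',z,g)-\tau_{j,j'})}\Big),
\qquad \mathbb{P}(\text{choose }j') = \pi_{j,j'}(V(\cdot,z,g)).
\end{align*}
The first identity inserted into the Bellman relation produces the stated form of $\cL_x$, and the second identifies the conditional moving probabilities with the $\pi_{j,j'}$ defined in the text. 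The master equation \eqref{eq:ME:spatial_model} then follows by imposing belief consistency exactly as in \eqref{eq:master}: prices $q_t=[w_{j,t}]$ are explicit in $(z_t,g_t)$, so beliefs reduce to beliefs about the distributional drift $\tilde\mu_g$ inside $\cL_g$, which in equilibrium is replaced by the true $\mu_g$.

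It remains to derive $\mu_g$ and hence $\cL_g$. Because the idiosyncratic state is a pure jump process with no drift or diffusion, the generic KFE~\eqref{eq:generic-KFE} collapses---under the discrete-state convention of the accompanying footnote, where spatial derivatives vanish and integrals become sums---to the Kolmogorov forward equation of the continuous-time Markov chain with transition rate $\mu\pi_{k,j}(V(\cdot,z,g))$ from $k$ to $j$. The inflow into location $j$ is $\sum_{k=1}^J \mu\pi_{k,j}(V(\cdot,z,g))\,g(k)$ and the total outflow is $\mu g(j)\sum_{j'}\pi_{j,j'} = \mu g(j)$, so $\mu_g(j,z,g) = \mu\big(\sum_{k=1}^J \pi_{k,j}(V(\cdot,z,g))\,g(k) - g(j)\big)$, which is the claimed form. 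Since the distribution now lives on the finitely many coordinates $g(1),\dots,g(J)$, the Fréchet derivative $D_g V$ acts coordinatewise and $\cL_g V(j,z,g)=\sum_{j'=1}^J \partial_{g(j')}V(j,z,g)\,\mu_g(j',z,g)$, completing the proof of the stated form.

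Finally, the equivalence with \cite{bilal2021solving} would be established by exhibiting the explicit reparametrization that removes the housing sector from their model: the housing block enters only through a location-specific wedge in the effective consumption index and in the local price level, which can be absorbed into the productivity shifters $\beta_j$, the moving costs $\tau_{j,j'}$, and a rescaling of $\nu$; after this substitution the two master equations coincide term by term. I expect the Gumbel expected-maximum computation---keeping careful track of the mean-zero normalization and of $\nu$ as an inverse scale parameter---to be the only genuinely delicate step; the reduction of the generic KFE to a Markov-chain forward equation and the equivalence check are essentially bookkeeping, with the latter's only subtlety being to state precisely which parameter map is used.
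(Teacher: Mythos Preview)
Your proposal is correct and follows essentially the same route as the paper: set up the HJBE with the moving-opportunity jump term, evaluate the Gumbel expected maximum to obtain the log-sum-exp form of $\cL_x$ and the logit choice probabilities, reduce the generic KFE to the forward equation of the induced continuous-time Markov chain, and impose belief consistency. The only correction concerns the equivalence with \cite{bilal2021solving}: the paper's parameter map runs through $\alpha$, $\chi_j$, and $\beta_j$ (matching the flow-utility term $U(j,z,g)=u(C_{0,j}e^{\zeta\chi_j^B z}g(j)^{-\xi})$ to $u(w_j(z,g))$ via $\alpha=\xi$, $\chi_j=\zeta\chi_j^B$, $(1-\alpha)e^{\beta_j}=C_{0,j}$), whereas $\tau_{j,j'}$ and $\nu$ are unchanged between the two specifications.
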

\begin{proof}
    See Appendix~\ref{asubsec:spatial:model_solution}.
\end{proof}

\noindent \emph{Model solution.} Because the model's idiosyncratic state space is discrete and all individual density values $g_t(j)$ matter directly for prices, the discrete state space method is the most appropriate for this model. We solve the model with $J=50$ locations.
We provide further details on the parameterization and numerical implementation in Appendices~\ref{asubsec:spatial:parameters} and \ref{asubsec:spatial:implementation}, respectively.
Here, we only note that, with regard to moving costs $\tau_{jj^\prime}$, we create a simple example based on a ``cluster structure'' with a central cluster and several periphery clusters that makes is more costly to move from/to some locations (the periphery) than others.  The terminal loss after training is $8.403\times 10^{-05}$, similar in magnitude to the losses for our KS example. Further details on the loss decay are provided in Appendix~\ref{asubsec:spatial:loss_decay}.

\begin{figure}[hbtp]
    \centering
    \includegraphics[width=1.\textwidth]{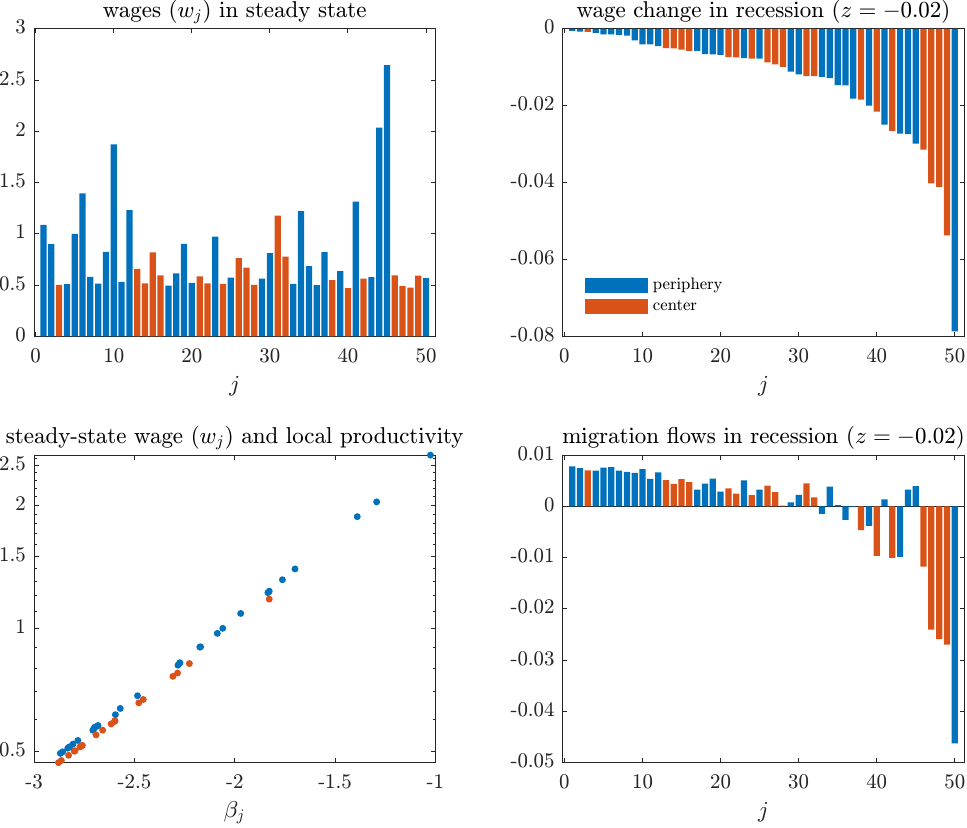}
    \caption{\small Illustration of model solution for the dynamic spatial model. The left panels depict the wage distribution in the stochastic steady state, $g=g^{sss}$, $z=0$, as a function of location $j$ (top) and location-specific productivity $\beta_{j}$ (bottom). The right panels depict the impact effects of a hypothetical ``recession shock'' that moves the aggregate state to $(z,g)=(-0.02,g^{sss})$. The top right panel shows the relative change in wages in a recession and the bottom right panel the resulting net migration flows as a proportion of the population ($\mu_{g,j,t}/g_t(j)$). In all panels but the bottom left, locations $j$ are sorted by their sensitivity to aggregate productivity ($\chi_j$) in ascending order. Blue bars/dots depict periphery locations and red bars/dots depict central locations.}
    \label{fig:spatial:result_illustration}
\end{figure}

Figure~\ref{fig:spatial:result_illustration} depicts some aspects of the computed model solution.
The figure illustrates wages by location in the stochastic steady state (top left panel) and how wages depend on the local productivity (bottom left panel) as well as how a recession (reduction in $z_t$) affects wages and population flows.
Except for the scatter plot, locations in the figure are sorted by their sensitivity to the aggregate productivity shock $\chi_j$. Note that there are several dimensions of ex-ante heterogeneity across locations, so that the non-monotonic variation in the plots does not indicate training errors. The overall picture that emerges appears to be economically meaningful. Wages in harder-hit locations fall more in a recession (top right panel). As a result, migration flows in the recession are mostly directed from high-$\chi_j$ to low-$\chi_j$ regions (bottom right). The exception are primarily very productive regions in which wages are high to begin with. The cluster structure of moving costs matters mainly for the baseline level of wages which tends to be lower in high-productivity regions (bottom left): these regions attract more workers due to a higher option value of moving.

\section{Conclusion}

This paper proposes a new algorithm that uses deep learning to globally characterize numerical solutions to continuous time heterogeneous agent economies with aggregate shocks.
We demonstrate our algorithm by solving canonical models in the macroeconomics literature.
Although deep learning algorithms are straightforward to describe, we find that implementing them successfully requires careful consideration of the training details.
We close by collecting some practical lessons from our experiences:
\begin{enumerate}
    \item Working out the correct sampling approach is very important.
    \item Neural networks find it easier to work with smooth constraints.
    \item Enforcing shape constraints helps with speed and stability.
    \item Starting with a simple, solvable model is helpful for tuning hyperparameters.
\end{enumerate}
Ultimately, we believe this is only the beginning what these techniques can achieve.
We have demonstrated that we can now globally solve continuous time heterogeneous agent economies with aggregate shocks, which opens up new and exciting areas of research.

\singlespacing
\spacing{0.97}

\bibliographystyle{apalike}

\bibliography{aux/library.bib}

\begin{thebibliography}{}

\bibitem[Achdou et~al., 2022a]{achdou2022income}
Achdou, Y., Han, J., Lasry, J.-M., Lions, P.-L., and Moll, B. (2022a).
\newblock Income and wealth distribution in macroeconomics: A continuous-time
  approach.
\newblock {\em The Review of Economic Studies}, 89(1):45--86.

\bibitem[Achdou et~al., 2022b]{achdou2022simulating}
Achdou, Y., Lasry, J.-M., and Lions, P.~L. (2022b).
\newblock Simulating numerically the {K}rusell-{S}mith model with neural
  networks.
\newblock {\em arXiv preprint arXiv:2211.07698}.

\bibitem[Ahn et~al., 2018]{Ahn2018}
Ahn, S., Kaplan, G., Moll, B., Winberry, T., and Wolf, C. (2018).
\newblock When inequality matters for macro and macro matters for inequality.
\newblock {\em NBER macroeconomics annual}, 32(1):1--75.

\bibitem[Aiyagari, 1994]{Aiyagari1994}
Aiyagari, S.~R. (1994).
\newblock Uninsured idiosyncratic risk and aggregate saving.
\newblock {\em The Quarterly Journal of Economics}, 109(3):659--684.

\bibitem[Al-Aradi et~al., 2022]{al2022extensions}
Al-Aradi, A., Correia, A., Jardim, G., de~Freitas~Naiff, D., and Saporito, Y.
  (2022).
\newblock Extensions of the deep {G}alerkin method.
\newblock {\em Applied Mathematics and Computation}, 430:127287.

\bibitem[Alvarez and Lippi, 2022]{alvarez2022analytic}
Alvarez, F. and Lippi, F. (2022).
\newblock The analytic theory of a monetary shock.
\newblock {\em Econometrica}, 90(4):1655--1680.

\bibitem[Alvarez et~al., 2023]{alvarez2023price}
Alvarez, F., Lippi, F., and Souganidis, P. (2023).
\newblock Price setting with strategic complementarities as a mean field game.
\newblock {\em Econometrica}, 91(6):2005--2039.

\bibitem[Auclert et~al., 2021]{auclert2021using}
Auclert, A., Bard{\'o}czy, B., Rognlie, M., and Straub, L. (2021).
\newblock Using the sequence-space {J}acobian to solve and estimate
  heterogeneous-agent models.
\newblock {\em Econometrica}, 89(5):2375--2408.

\bibitem[Azinovic et~al., 2022]{azinovic2022deep}
Azinovic, M., Gaegauf, L., and Scheidegger, S. (2022).
\newblock Deep equilibrium nets.
\newblock {\em International Economic Review}, 63(4):1471--1525.

\bibitem[Azinovic and {\v{Z}}emli{\v{c}}ka, 2023]{azinovic2023economics}
Azinovic, M. and {\v{Z}}emli{\v{c}}ka, J. (2023).
\newblock Economics-inspired neural networks with stabilizing homotopies.
\newblock {\em arXiv preprint arXiv:2303.14802}.

\bibitem[Barnett et~al., 2023]{barnett2023deep}
Barnett, M., Brock, W., Hansen, L.~P., Hu, R., and Huang, J. (2023).
\newblock A deep learning analysis of climate change, innovation, and
  uncertainty.
\newblock {\em arXiv preprint arXiv:2310.13200}.

\bibitem[Bayraktar et~al., 2018]{bayraktar2018numerical}
Bayraktar, E., Budhiraja, A., and Cohen, A. (2018).
\newblock A numerical scheme for a mean field game in some queueing systems
  based on {M}arkov chain approximation method.
\newblock {\em SIAM Journal on Control and Optimization}, 56(6):4017--4044.

\bibitem[Bensoussan et~al., 2015]{bensoussan2015master}
Bensoussan, A., Frehse, J., and Yam, S. C.~P. (2015).
\newblock The master equation in mean field theory.
\newblock {\em Journal de Math{\'e}matiques Pures et Appliqu{\'e}es},
  103(6):1441--1474.

\bibitem[Bertucci and Cecchin, 2022]{bertucci2022mean}
Bertucci, C. and Cecchin, A. (2022).
\newblock Mean field games master equations: from discrete to continuous state
  space.
\newblock {\em arXiv preprint arXiv:2207.03191}.

\bibitem[Bhandari et~al., 2023]{bhandari2023perturbational}
Bhandari, A., Bourany, T., Evans, D., and Golosov, M. (2023).
\newblock A perturbational approach for approximating heterogeneous-agent
  models.

\bibitem[Bilal, 2023]{bilal2021solving}
Bilal, A. (2023).
\newblock Solving heterogeneous agent models with the master equation.
\newblock Technical report, National Bureau of Economic Research.

\bibitem[Bilal and Rossi-Hansberg, 2023]{bilal2023anticipating}
Bilal, A. and Rossi-Hansberg, E. (2023).
\newblock Anticipating climate change across the united states.
\newblock Technical report, National Bureau of Economic Research.

\bibitem[Bretscher et~al., 2022]{bretscher2022ricardian}
Bretscher, L., Fern{\'a}ndez-Villaverde, J., and Scheidegger, S. (2022).
\newblock Ricardian business cycles.
\newblock {\em Available at SSRN}.

\bibitem[Brzoza-Brzezina et~al., 2015]{brzoza2015penalty}
Brzoza-Brzezina, M., Kolasa, M., and Makarski, K. (2015).
\newblock A penalty function approach to occasionally binding credit
  constraints.
\newblock {\em Economic Modelling}, 51:315--327.

\bibitem[Cardaliaguet et~al., 2019]{Cardaliaguet2015}
Cardaliaguet, P., Delarue, F., Lasry, J.-M., and Lions, P.-L. (2019).
\newblock {\em The master equation and the convergence problem in mean field
  games:(ams-201)}.
\newblock Princeton University Press.

\bibitem[Carmona and Lauri{\`e}re, 2021]{Carmona2021}
Carmona, R. and Lauri{\`e}re, M. (2021).
\newblock Convergence analysis of machine learning algorithms for the numerical
  solution of mean field control and games {I}: the ergodic case.
\newblock {\em SIAM Journal on Numerical Analysis}, 59(3):1455--1485.

\bibitem[Carmona and Lauri{\`e}re, 2022]{Carmona2022}
Carmona, R. and Lauri{\`e}re, M. (2022).
\newblock Convergence analysis of machine learning algorithms for the numerical
  solution of mean field control and games: {II}—the finite horizon case.
\newblock {\em The Annals of Applied Probability}, 32(6):4065--4105.

\bibitem[Cohen et~al., 2024]{cohen2024deep}
Cohen, A., Lauri{\`e}re, M., and Zell, E. (2024).
\newblock Deep backward and galerkin methods for the finite state master
  equation.
\newblock {\em arXiv preprint arXiv:2403.04975}.

\bibitem[Delarue et~al., 2020]{delarue2020master}
Delarue, F., Lacker, D., and Ramanan, K. (2020).
\newblock From the master equation to mean field game limit theory: {L}arge
  deviations and concentration of measure.
\newblock {\em Annals of Probability}, 48(1):211--263.

\bibitem[{Den Haan}, 1997]{DenHaan1997}
{Den Haan}, W. (1997).
\newblock {Solving Dynamic Models with Aggregrate Shocks and Heterogeneous
  Agents}.
\newblock {\em Macroeconomic Dynamics}, 1(2):355--386.

\bibitem[Duarte et~al., 2024]{duarte2024machine}
Duarte, V., Duarte, D., and Silva, D. (2024).
\newblock Machine learning for continuous-time finance.

\bibitem[Fern{\'{a}}ndez-Villaverde et~al., 2018]{Fernandez-Villaverde2018}
Fern{\'{a}}ndez-Villaverde, J., Hurtado, S., and Nu{\~{n}}o, G. (2018).
\newblock {Financial Frictions and the Wealth Distribution}.
\newblock {\em Working Paper}, pages 1--51.

\bibitem[Fern{\'a}ndez-Villaverde et~al., 2023]{fernandez2023financial}
Fern{\'a}ndez-Villaverde, J., Hurtado, S., and Nuno, G. (2023).
\newblock Financial frictions and the wealth distribution.
\newblock {\em Econometrica}, 91(3):869--901.

\bibitem[Fouque and Zhang, 2020]{fouque2020deep}
Fouque, J.-P. and Zhang, Z. (2020).
\newblock Deep learning methods for mean field control problems with delay.
\newblock {\em Frontiers in Applied Mathematics and Statistics}, 6:11.

\bibitem[Germain et~al., 2022a]{germain2022deepsets}
Germain, M., Lauri{\`e}re, M., Pham, H., and Warin, X. (2022a).
\newblock Deep{S}ets and their derivative networks for solving symmetric
  {PDE}s.
\newblock {\em Journal of Scientific Computing}, 91(2):63.

\bibitem[Germain et~al., 2022b]{germain2022numerical}
Germain, M., Mikael, J., and Warin, X. (2022b).
\newblock Numerical resolution of mckean-vlasov fbsdes using neural networks.
\newblock {\em Methodology and Computing in Applied Probability}, pages 1--30.

\bibitem[Glorot and Bengio, 2010]{GlorotBengio2010}
Glorot, X. and Bengio, Y. (2010).
\newblock Understanding the difficulty of training deep feedforward neural
  networks.
\newblock In {\em Proceedings of the thirteenth international conference on
  artificial intelligence and statistics}, pages 249--256. JMLR Workshop and
  Conference Proceedings.

\bibitem[Goodfellow et~al., 2016]{goodfellow2016deep}
Goodfellow, I., Bengio, Y., and Courville, A. (2016).
\newblock {\em Deep learning}.
\newblock MIT press.

\bibitem[Gopalakrishna, 2021]{Gopalakrishna2021}
Gopalakrishna, G. (2021).
\newblock Aliens and continuous time economies.
\newblock {\em Swiss Finance Institute Research Paper}, (21-34).

\bibitem[Gopalakrishna et~al., 2024]{GopalakrishnaGuPayne2024}
Gopalakrishna, G., Gu, Z., and Payne, J. (2024).
\newblock Asset pricing, participation constraints, and inequality.
\newblock {\em Princeton Working Paper}.

\bibitem[Hadikhanloo and Silva, 2019]{hadikhanloo2019finite}
Hadikhanloo, S. and Silva, F.~J. (2019).
\newblock Finite mean field games: fictitious play and convergence to a first
  order continuous mean field game.
\newblock {\em Journal de Math{\'e}matiques Pures et Appliqu{\'e}es},
  132:369--397.

\bibitem[Han et~al., 2018]{han2018solving}
Han, J., Jentzen, A., and E, W. (2018).
\newblock Solving high-dimensional partial differential equations using deep
  learning.
\newblock {\em Proceedings of the National Academy of Sciences},
  115(34):8505--8510.

\bibitem[Han et~al., 2021]{han2021}
Han, J., Yang, Y., and E, W. (2021).
\newblock {DeepHAM}: A global solution method for heterogeneous agent models
  with aggregate shocks.
\newblock {\em arXiv preprint arXiv:2112.14377}.

\bibitem[Hu and Lauriere, 2022]{hu2022recent}
Hu, R. and Lauriere, M. (2022).
\newblock Recent developments in machine learning methods for stochastic
  control and games.
\newblock {\em ssrn.4096569}.

\bibitem[Huang, 2023a]{huang2023breaking}
Huang, J. (2023a).
\newblock Breaking the curse of dimensionality in heterogeneous-agent models: A
  deep learning-based probabilistic approach.
\newblock {\em Available at SSRN 4649043}.

\bibitem[Huang, 2023b]{huang2023probabilistic}
Huang, J. (2023b).
\newblock A probabilistic solution to high-dimensional continuous-time macro
  and finance models.

\bibitem[Huang and Yu, 2024]{huang2024applications}
Huang, J. and Yu, J. (2024).
\newblock Applications of deep learning-based probabilistic approach to
  “combinatorial” problems in economics.

\bibitem[Kahou et~al., 2021]{kahou2021}
Kahou, M.~E., Fern{\'a}ndez-Villaverde, J., Perla, J., and Sood, A. (2021).
\newblock Exploiting symmetry in high-dimensional dynamic programming.
\newblock Technical report, National Bureau of Economic Research.

\bibitem[Kaplan et~al., 2018]{kaplan2018monetary}
Kaplan, G., Moll, B., and Violante, G.~L. (2018).
\newblock Monetary policy according to hank.
\newblock {\em American Economic Review}, 108(3):697--743.

\bibitem[Khan and Thomas, 2007]{khan2007inventories}
Khan, A. and Thomas, J.~K. (2007).
\newblock Inventories and the business cycle: An equilibrium analysis of (s, s)
  policies.
\newblock {\em American Economic Review}, 97(4):1165--1188.

\bibitem[Khan and Thomas, 2008]{Khan2008}
Khan, A. and Thomas, J.~K. (2008).
\newblock {Idiosyncratic shocks and the role of nonconvexities in plant and
  aggregate investment dynamics}.
\newblock {\em Econometrica}, 76(2):395--436.

\bibitem[Krusell and Smith, 1998]{Krusell1998}
Krusell, P. and Smith, A.~A. (1998).
\newblock {Income and Wealth Heterogeneity in the Macroeconomy}.
\newblock {\em Journal of Political Economy}, 106(5):867--896.

\bibitem[Lacker, 2020]{lacker2020convergence}
Lacker, D. (2020).
\newblock On the convergence of closed-loop {N}ash equilibria to the mean field
  game limit.
\newblock {\em Annals of applied probability: an official journal of the
  Institute of Mathematical Statistics}, 30(4):1693--1761.

\bibitem[Li et~al., 2022]{li2022}
Li, J., Yue, J., Zhang, W., and Duan, W. (2022).
\newblock The deep learning {G}alerkin method for the general stokes equations.
\newblock {\em Journal of Scientific Computing}, 93(1):1--20.

\bibitem[Lions, 2011]{LionsCDF}
Lions, P.-L. (2007-2011).
\newblock Lectures at {C}ollege de {F}rance.

\bibitem[Lu et~al., 2021]{lu2021deepxde}
Lu, L., Meng, X., Mao, Z., and Karniadakis, G.~E. (2021).
\newblock {DeepXDE}: A deep learning library for solving differential
  equations.
\newblock {\em SIAM review}, 63(1):208--228.

\bibitem[Maliar et~al., 2021]{Maliar2021}
Maliar, L., Maliar, S., and Winant, P. (2021).
\newblock Deep learning for solving dynamic economic models.
\newblock {\em Journal of Monetary Economics}, 122:76--101.

\bibitem[Min and Hu, 2021]{min2021signatured}
Min, M. and Hu, R. (2021).
\newblock Signatured deep fictitious play for mean field games with common
  noise.
\newblock In {\em International Conference on Machine Learning}, pages
  7736--7747. PMLR.

\bibitem[Payne et~al., 2024]{payne2024DeepSAM}
Payne, J., Rebei, A., and Yang, Y. (2024).
\newblock Deep learning for search and matching models.
\newblock {\em Princeton Working Paper}.

\bibitem[Perrin et~al., 2022]{perrin2022generalization}
Perrin, S., Lauri{\`e}re, M., P{\'e}rolat, J., {\'E}lie, R., Geist, M., and
  Pietquin, O. (2022).
\newblock Generalization in mean field games by learning master policies.
\newblock In {\em Proceedings of the AAAI Conference on Artificial
  Intelligence}, volume~36, pages 9413--9421.

\bibitem[Prohl, 2017]{Prohl2017}
Prohl, E. (2017).
\newblock {Discetizing the Infinite-Dimensional Space of Distributions to
  Approximate Markov Equilibria with Ex-Post Heterogeneity and Aggregate Risk}.

\bibitem[Raissi et~al., 2017]{raissi2017physics}
Raissi, M., Perdikaris, P., and Karniadakis, G.~E. (2017).
\newblock Physics informed deep learning (part i): Data-driven solutions of
  nonlinear partial differential equations.
\newblock {\em arXiv preprint arXiv:1711.10561}.

\bibitem[Reiter, 2002]{Reiter2002}
Reiter, M. (2002).
\newblock {Recursive computation of heterogeneous agent models}.
\newblock {\em manuscript, Universitat Pompeu Fabra}, (July):25--27.

\bibitem[Reiter, 2008]{Reiter2008}
Reiter, M. (2008).
\newblock {Solving heterogeneous-agent models by projection and perturbation}.
\newblock {\em Journal of Economic Dynamics and Control}, 33:649--665 Contents.

\bibitem[Reiter, 2009]{reiter2009solving}
Reiter, M. (2009).
\newblock Solving heterogeneous-agent models by projection and perturbation.
\newblock {\em Journal of Economic Dynamics and Control}, 33(3):649--665.

\bibitem[Reiter, 2010]{Reiter2010}
Reiter, M. (2010).
\newblock {Approximate and Almost-Exact Aggregation in Dynamic Stochastic
  Heterogeneous-Agent Models}.
\newblock Technical report, Vienna Institute for Advanced Studies.

\bibitem[Sauzet, 2021]{sauzet2021projection}
Sauzet, M. (2021).
\newblock Projection methods via neural networks for continuous-time models.
\newblock {\em Available at SSRN 3981838}.

\bibitem[Schaab, 2020]{Schaab2020}
Schaab, A. (2020).
\newblock Micro and macro uncertainty.
\newblock {\em Available at SSRN 4099000}.

\bibitem[Sirignano and Spiliopoulos, 2018]{Sirignano2018}
Sirignano, J. and Spiliopoulos, K. (2018).
\newblock {DGM}: A deep learning algorithm for solving partial differential
  equations.
\newblock {\em Journal of computational physics}, 375:1339--1364.

\bibitem[Sznitman, 1991]{sznitman1991topics}
Sznitman, A.-S. (1991).
\newblock Topics in propagation of chaos.
\newblock {\em Lecture notes in mathematics}, pages 165--251.

\bibitem[Winberry, 2018]{Winberry2018}
Winberry, T. (2018).
\newblock A method for solving and estimating heterogeneous agent macro models.
\newblock {\em Quantitative Economics}, 9(3):1123--1151.

\end{thebibliography}

\newpage
\appendix

\singlespacing
\onehalfspacing
\spacing{1.1}
\section{Supplementary Proofs For Section \ref{sec:genericModel} (Online Appendix)} \label{asec:genericModel}

\begin{proof}[Proof of the Kolmogorov Forward Equation]
We derive the KFE by studying the dynamics in a finite agent population and then taking the limit as the number of agents goes to infinite (the so called ``propogation of chaos'' technique).
For notational convenience, in our working, we assume that all variables are continuous.
The end formula extends naturally to discrete variables if the integrals in the discrete dimensions are interpreted as sums and all derivatives in that dimension are set to zero.

\underline{Step 1: Set up problem with a finite number of agents:}
Suppose that there are $N < \infty$ agents. Then, we define the following empirical density and inner product:
\begin{align*}
  \hat{g}_t^N :=& \frac{1}{N} \sum_{i=1}^N \delta_{x_t^i} &
  \langle \phi(\cdot), \hat{g}_t^N \rangle :=& \frac{1}{N} \sum_{i=1}^N \phi(x_t^i),
\end{align*}
where test functions $\phi(\cdot)$ are bounded, twice differentiable and $\phi(x),D_x\phi(x)$ vanishes at the boundary of $\cX$, which we denote by $\partial \cX$.
We define the finite agent equilibrium price as $\hat{q}_t^N$.
We define the limiting cases by:
\begin{align*}
  g_t :=& \lim_{N\rightarrow \infty} \frac{1}{N} \sum_{i=1}^N \delta_{x_t^i}, 
  &
  \langle \phi(\cdot), g_t \rangle&{}  = \int_{\cX} \phi(x)  g_t(x) dx.
\end{align*}

\underline{Step 2: Apply It\^o's Lemma:} Taking It\^o's Lemma we get that the following (since the idiosyncratic state $x_t^i$ is not directly exposed to aggregate shocks): 
\begin{align*}
  d\langle \phi(\cdot), \hat{g}_t^N \rangle =  \frac{1}{N} \sum_{i=1}^N d\phi(x_t^i)
  ={}& \frac{1}{N} \sum_{i=1}^N D_x \phi(x_t^i) \dotp \mu_x(c_t^i,x_t^i,z_t,\hat{q}_t^N) dt \\
  {}&+ \frac{1}{2N} \sum_{j=1}^N \tr \Big\{ \Sigma_x (c_t^i,x_t^i,z_t,\hat{q}_t^N) D_x^2 \phi(x_t^i) \Big\} dt \\
  {}& + \frac{1}{N} \sum_{i=1}^N (\phi(x_t^i + \varsigma_x(c_t^i,x_t^i,z_t,\hat{q}_t^N)) - \phi(x_t^i)) dJ_{t}^i\\
  {}&+ \frac{1}{N} \sum_{i=1}^N D_x \phi(x_t^i) \dotp \sigma_x(c_t^i,x_t^i,z_t,\hat{q}_t^N) dB_t^i
\end{align*}
where we have used the vector and trace notation (rather than the summation notation used in the main text) to streamline the expressions.

\underline{Step 3: Take the limit as $N \rightarrow \infty$:} 
We assume sufficient regulatory that the law or large numbers applies and so the limits as we take $N \rightarrow \infty$ becomes the cross sectional averages:
\begin{align}
    \frac{1}{N} \sum_{i=1}^N D_x \phi(x_t^i) \dotp \mu_x(c_t^i,x_t^i,z_t,\hat{q}_t^N)dt \rightarrow{}& \int_{\cX} D_x \phi(x) \dotp \mu_x(c^\ast(x,z_t,g_t),x,z_t,q_t) g_t(x) dx dt \\
    \frac{1}{2N} \sum_{j=1}^N \tr \Big\{ \Sigma_x (c_t^i,x_t^i,z_t,\hat{q}_t^N) D_x^2 \phi(x_t^i) \Big\} dt {}& \\
    {}& \hspace{-4.0cm} \rightarrow \frac{1}{2} \int_{\cX} \tr \Big\{ \Sigma_x(c^\ast(x,z_t,g_t),x,z_t,q_t) D_x^2 \phi(x) \Big\} g_t(x)dx dt \\
    \frac{1}{N} \sum_{i=1}^N (\phi(x_t^i + \varsigma_x(c_t^i,x_t^i,z_t,\hat{q}_t^N)) - \phi(x_t^i)) dJ_{t}^i  {}& \\ {}& \hspace{-4.0cm} \rightarrow \int_{\cX}\lambda(x,z_t)(\phi(x+ \varsigma_x(c^\ast(x,z_t,g_t),x,z_t,q_t)) - \phi(x))g_t(x) dx dt \\
    \frac{1}{N} \sum_{i=1}^N D_x \phi(x_t^i) \dotp \sigma_x(c_t^i,x_t^i,z_t,\hat{q}_t^N) dB_t^i \rightarrow{}& 0
\end{align}
and so we get:
\begin{align}
  d\langle \phi(\cdot), g_t \rangle ={}&  
  \left[\int_{\cX} D_x \phi(x) \dotp \mu_x(c^\ast(x,z_t,g_t),x,z_t,q_t) g_t(x) dx \right] dt \\
    {}&+ \frac{1}{2}\left[ \int_{\cX} \tr \Big\{ \Sigma_x(c^\ast(x,z_t,g_t),x,z_t,q_t) D_x^2 \phi(x) \Big\} g_t(x)dx\right]dt \\
    {}& +\left[\int_{\cX}\lambda(x,z_t)(\phi(x+ \varsigma_x(c^\ast(x,z_t,g_t),x,z_t,q_t)) - \phi(x))g_t(x) dx \right]  dt \label{eq:step3_end}
\end{align}

\underline{Step 4: ``Intregration by Parts'':}
By the multidimensional integration by parts (formally the Stokes-Cartan theorem) and the assumptions on $\phi(\cdot)$, the first term on the RHS of \eqref{eq:step3_end} can be expressed as:
\begin{align*}
  \Bigg[\int_{\cX} {}& D_x \phi(x) \dotp \mu_x(c^\ast(x,z_t,g_t),x,z_t,q_t) g_t(x) dx \Bigg] dt \\
    ={}& \Bigg[ \int_{\partial \cX} \phi(x) \mu_x(c^\ast(x,z_t,g_t),x,z_t,q_t)g_t(x)dx - \int_{\cX} \diverge [ \mu_x(c^\ast(x,z_t,g_t),x,z_t,q_t) g_t(x) ] \phi(x) dx \Bigg]dt \\
    ={}& - \Bigg[ \int_{\cX} \diverge [ \mu_x(c^\ast(x,z_t,g_t),x,z_t,q_t) g_t(x) ] \phi(x) dx \Bigg]dt
\end{align*}
Applying Stokes-Cartan theorem twice, the second term on the RHS of \eqref{eq:step3_end} can be simplified as:
\begin{align}
    \frac{1}{2} \left( \int_{\cX} \tr \Big\{ D_x^2 \Big(\Sigma_x(c^\ast(x,z_t,g_t),x,z_t,q_t) g_t(x) \Big)\Big\} \phi(x) dx\right) dt
\end{align}
Finally, the third term on the RHS of \eqref{eq:step3_end} can be expressed as:
\begin{align*}
  &\left[\int_{\cX}\lambda(x,z_t)(\phi(x+ \varsigma_x(c^\ast(x,z_t,g_t),x,z_t,q_t)) - \phi(x))g_t(x) dx \right]  dt  \\
  =&\left[\int_{\cX}\lambda(x,z_t)\phi(x+ \varsigma_x(c^\ast(x,z_t,g_t),x,z_t,q_t))g_t(x)dx - \int_{\cX}\lambda(x,z_t)\phi(x)g_t(x) dx \right]  dt.
\end{align*}
Let $\breve{\varsigma}$ be defined to be such that (i.e. $\breve{\varsigma}$ is part of the inverse satisfying the following relationship):
\begin{align*}
  x+ \varsigma_x(c^\ast(x,z,g),x,z,Q(z,g)) = y \quad \Leftrightarrow \quad x = y - \breve{\varsigma}(y,z,g),
\end{align*}
where we have plugged in $q=Q(z,g)$ to streamline notation.
Now, make the change of variables. We have that:
\begin{align*}
  {}&\left[\int_{\cX}\lambda(y,z_t)\phi(y)g_t(y - \breve{\varsigma}(y,z_t,g_t))|I-D_y\breve{\varsigma}(y,z_t,g_t)|dy - \int_{\cX}\lambda(x,z)\phi(x)g_t(x) dx \right] dt \\
  {}& \hspace{1cm} = \left[\int_{\cX} \lambda(x,z_t)\Big( g_t(x - \breve{\varsigma}(x,z_t,g_t))|I-D_x\breve{\varsigma}(x,z_t,g_t)| - g_t(x) \Big) \phi(x) dx \right]  dt,
\end{align*}
where $|I-D_x\breve{\varsigma}(x,z_t,g_t)|$ is the determinant of matrix $I-D_x\breve{\varsigma}(x,z_t,g_t)$.
Putting everything back together, we get that:
\begin{align*}
  {}& \int_{\cX} (\partial_t g_t(x)) \phi(x) dx \\
  {}& \hspace{1cm} =\int_{\cX} \Big(- \diverge [ \mu_x(c^\ast(x,z_t,g_t),x,z_t,Q(z_t,g_t)) g_t(x) ] \Big) \phi(x) dx \\
   {}& \hspace{1cm}  + \frac{1}{2}\int_{\cX} \tr \Big\{ D_x^2 \Big(\Sigma_x(c^\ast(x,z_t,g_t),x,z_t,Q(z_t,g_t)) g_t(x) \Big)\Big\} \phi(x) dx \\
   {}& \hspace{1cm} + \int_{\cX} \Big( \lambda(x,z_t) \big( g_t(x - \breve{\varsigma}(x,z_t,g_t))|I-D_x\breve{\varsigma}(x,z_t,g_t)| - g_t(x) \big) \Big) \phi(x) dx.
\end{align*}
Because this equation must hold for all test functions $\phi$, it follows that $g_t$ must satisfy the KFE~\eqref{eq:generic-KFE} stated in the main text for almost all $x\in\cX$.

\end{proof}

\section{Details on Section \ref{sec:krusell_smith} (Online Appendix)} \label{asec:implementation_details}

\subsection{Parameters} \label{asec:implementation_details:parameters}

\renewcommand{\arraystretch}{1.0}
\begin{table}[H]
\centering
\begin{tabular}{lcc}
\hline
Parameter & Symbol & Value  \\
\hline
Capital share & $\alpha$ & $1/3$  \\
Depreciation & $\delta$ & $0.1$ \\
Risk aversion & $\gamma$ & $2.1$ \\
Discount rate & $\rho$ & $0.05$ \\
Mean TFP & $\overline{Z}$ & $0.00$ \\
Reversion rate & $\eta$ & $0.50$ \\
Volatility of TFP & $\sigma$ & $0.01$ \\
Transition rate (1 to 2) & $\lambda_1$ & $0.4$ \\
Transition rate (2 to 1) & $\lambda_2$ & $0.4$ \\
Low labor productivity & $l_1$ & 0.3 \\
High labor productivity & $l_2$ & $1+\lambda_2/\lambda_1 (1-n_1)$ \\
Penalty Function & $\psi(a)$ & $-\frac{1}{2}\kappa(a-a_{lb})^2$ \\
Penalty parameters  & $a_{lb}$ & $1.0$ \\
Penalty parameters  & $\kappa$ & $3.0$ \\
Borrowing constraint & $\underline{a}$ & $0.0$ \\
Minimum of assets (for sampling) & $a_{min}$ & $10^{-6}$ \\
Maximum of assets (for sampling) & $a_{max}$ & $20.0$ \\
Minimum TFP (for sampling)  & $z_{min}$ & $-0.04$ \\
Maximum TFP (for sampling)  & $z_{max}$ & $0.04$ \\
\hline
\end{tabular}
\caption{Parameters for Krusell-Smith Model from Section \ref{subsec:ks:stochastic_z}}
\end{table}

\onehalfspacing

\subsection{Detail on the Master Equations} \label{asec:master_equations}

In this subsection of the Appendix, we describe the precise master equations that we use to train the neural network for each approach.
Let $\hat{W}((a,l),z,\hat{\varphi}) := \partial_a \hat{V}((a,l),z,\hat{\varphi})$ denote the derivative of the value function with respect to $a$.\\

\noindent\emph{Finite agent approximation:}
In this case, we replace the distribution by the positions of the agents $\hat{\varphi} = \{(a^i,l^i)\}_{i\le I}$
where $I = 41$ agents.
We use the envelope theorem to take the derivative of the HJBE.
The resulting finite dimensional master equation is given by:
\begin{align}
    0 = \cL \hat{W}((a^i,l^i), z, \hat{\varphi}) ={}& (r(z,\hat{\varphi}^{-i})-\rho) \hat{W}((a^i,l^i), z, \hat{\varphi}) + \textbf{1}_{a \leq a_{lb}} \partial_a \psi(a^i) \\
    {}& + (\cL_x + \cL_z + \hat{\cL}_g) \hat{W}((a^i,l^i),z,\hat{\varphi}) \label{eq:Master-KS}
\end{align}
where the operators become:
\begin{align}
    \cL_x \hat{W}((a^i,l^i), z, \hat{\varphi}) ={}& 
    s((a^i,l^i),\hat{c}^\ast((a^i,l^i),z, \hat{\varphi}),r(z,\hat{\varphi}^{-i}),w(z,\hat{\varphi}^{-i})) \partial_{a^i} \hat{W}((a^i, l^i), z, \hat{\varphi}) \\
    {}& +\lambda(l^i)\left(\hat{W}((a^i, \check{l}^i), z, \hat{\varphi})-\hat{W}((a^i, l^i), z, \hat{\varphi})\right)  \\
    \cL_z \hat{W}((a^i,l^i), z, \hat{\varphi}) ={}& \partial_z \hat{W}((a^i,l^i),z,\hat{\varphi})\eta(\overline{z} - z) + \frac{1}{2} \sigma^2 \partial_{zz} \hat{W}((a^i,l^i),z,\hat{\varphi})  \\
    \hat{\cL}_g \hat{W}((a^i,l^i), z, \hat{\varphi}) ={}& \sum_{j\neq i} s((a^j,l^j),\hat{c}^\ast((a^j,l^j),z, \hat{\varphi}),r(z,\hat{\varphi}^{-j}),w(z,\hat{\varphi}^{-j})) \partial_{a^j} \hat{W}((a^i,l^i),z,\hat{\varphi}) \\
    {}& +\lambda(l^j)\left( \hat{W}((a^i,l^i), \{(a^j, \check{l}^j), z, \hat{\varphi}^{-j} \}) -\hat{W}((a^i,l^i),z,\hat{\varphi}) \right)
\end{align}

\medskip

\noindent\emph{Discrete state space approximation:}
In this case, we replace the distribution by its values at the points $\xi_1,\dots,\xi_N$ on the grid. More specifically, we take $N = 186$ points, and we consider a discretization $a_1< \dots < a_{93}$ of the $a$-axis. We then denote $\xi_1 = (a_1, l_1), \dots, \xi_{93} = (a_{93}, l_1)$, $\xi_{94} = (a_1, l_2), \dots, \xi_{186} = (a_{93}, l_2)\in \mathbb{R}^2$. We use a uniform grid and denote $\Delta a = a_{2} - a_{1}$. For ease of presentation, we write $\hat\varphi_{m,j}$ with a two-dimensional index $(m,j)$ in place of $\hat\varphi_{n}$ with a linear index $n=93\cdot (j-1) + m$.

Recall that the dynamics of the discrete distribution take the generic form~\eqref{eq:generic-KFE-finite-g}. In our implementation, we use the finite difference scheme proposed by~\cite{achdou2022income}. The KFE is replaced by the following finite difference equation:
\begin{equation}
    \label{eq:KFE-finite-g-achdouetal}
    d\hat{\varphi}_{m,j,t} = \mu_{\hat{\varphi},m,j}(z_t, \hat{\varphi}_t)dt, \qquad m=1,\dots,93, j=1,2,
\end{equation}
where the drift at point $(m,j)$ is defined by
\begin{align}
\label{eq:kfe-finite-diff-achdouetal}
    \mu_{\hat{\varphi},m,j}(z,\hat\varphi ) :={}& -(\hat{\partial}_a[  \boldsymbol{s}(z,\hat{\varphi}))\odot \hat\varphi])_{m,j} 
    + \lambda(l_{\check{j}})\hat\varphi_{m,\check{j}} - \lambda(l_j) \hat\varphi_{m,j}.
\end{align}
Here, $\check{j} = 2$ for $j=1$ and $\check{j}=1$ for $j=2$, the vector of saving flows $\boldsymbol{s}(z,\hat{\varphi})\in \mathbb{R}^{93\times 2}$ is
$$
    \boldsymbol{s}_{m,j}(z,\hat{\varphi}) := s((a_m,l_j),\hat{c}^\ast((a_m,l_j),z,\hat{\varphi}),r(z,\hat{\varphi}),w(z,\hat{\varphi})),
$$
and the first order derivative is approximated using an upwind scheme:
\[
    (\hat{\partial}_a[\boldsymbol{s}\odot \hat{\varphi}])_{m,i} := \frac{\boldsymbol{s}_{m-1,j}^{+}\hat{\varphi}_{m-1,j}-\boldsymbol{s}_{m,j}^{+}\hat{\varphi}_{m,j}}{\Delta a}+\frac{\boldsymbol{s}_{m+1,j}^{-}\hat{\varphi}_{m+1,j}-\boldsymbol{s}_{m,j}^{-}\hat{\varphi}_{m,j}}{\Delta a}
\]
where $(\boldsymbol{s}_{m,j})^+$ and $(\boldsymbol{s}_{m,j})^-$ denote the positive and negative part of $\boldsymbol{s}_{m,j}$, respectively, and we use the convention that the component of a vector is zero if its index is outside the boundaries of the original vector (which can happen for $m=1$ or $m=93$).

Relative to the generic finite difference scheme presented in Section~\ref{subsec:discrete-state-approx-master}, the variant here is simpler in two regards. First, there is no need to define a second-order finite difference operator $\hat{\partial}_{aa}$ because there are no idiosyncratic Brownian shocks. Second, we have added here the jump term directly into equation~\eqref{eq:kfe-finite-diff-achdouetal} without defining an interpolation operator $\Delta_{\breve{\boldsymbol{\varsigma}}(z,\hat{\varphi})}$ for evaluating the density at shifted inputs. This is possible here because jumps only switch the the $l$-value, so that the post-jump state will be on the grid whenever the pre-jump state was (and vice versa). There is therefore no need to interpolate off the grid.

Similarly to the finite agent approximation, the finite dimensional master equation is given by:
\begin{align}
    0 = \cL \hat{W}((a,l), z, \hat{\varphi}) ={}& (r(z,\hat{\varphi})-\rho) \hat{W}((a,l), z, \hat{\varphi}) + \textbf{1}_{a \leq a_{lb}} \partial_a \psi(a) \\
    {}& + (\cL_x + \cL_z + \hat{\cL}_g) \hat{W}((a,l),z,\hat{\varphi}) \label{eq:Master-KS}
\end{align}
where the operators become:
\begin{align}
    \cL_x \hat{W}((a,l), z, \hat{\varphi}) ={}& 
    s((a,l),\hat{c}^\ast((a,l),z, \hat{\varphi}),r(z,\hat{\varphi}),w(z,\hat{\varphi})) \partial_{a} \hat{W}((a, l), z, \hat{\varphi}) \\
    {}& +\lambda(l)\left(\hat{W}((a, \check{l}), z, \hat{\varphi})-\hat{W}((a, l), z, \hat{\varphi})\right)  \\
    \cL_z \hat{W}((a,l), z, \hat{\varphi}) ={}& \partial_z \hat{W}((a,l),z,\hat{\varphi})\eta(\overline{z} - z) + \frac{1}{2} \sigma^2 \partial_{zz} \hat{W}((a,l),z,\hat{\varphi})  \\
    \hat{\cL}_g \hat{W}((a,l),z,\hat{\varphi})
        ={}& \sum_{j=1,2}\sum_{m=1}^{93} \mu_{\hat{\varphi},m,j}(z,\hat{\varphi}) \, \partial_{\hat\varphi_{m,j}} \hat{W}((a,l),z,\hat{\varphi})
\end{align}
where $\partial_{\hat\varphi_{m,j}}\hat{W}$ denotes the partial derivative of $\hat{W}$ with respect to the coordinate $(m,j)$ of $\hat{\varphi}$, using two-dimensional indexing of $\hat{\varphi}$ as above, and where $\mu_{\hat{\varphi},m,j}(z,\hat{\varphi})$ is as defined in equation~\eqref{eq:kfe-finite-diff-achdouetal}. \\

\noindent\emph{Projection approximation:}
In this case, we replace the distribution by projection coefficients $\hat{\varphi}\in \mathbb{R}^5$ onto a basis $b_0,b_1,...,b_5$ of $6$ basis functions. We choose the basis functions as follows:
\begin{enumerate}[label=(\roman*)]
    \item We start out by solving the steady-state model with finite difference methods on a grid of $101$ equally spaced grid points in the $a$-dimension. This finite-difference solution yields a $202\times 202$-matrix that serves as a finite-dimensional approximation to the steady-state KFE operator $\mathcal{L}^{KF,ss}$. In a first step, we construct the basis of eigenfunctions described in the main text and discussed in more detail in Appendix~\ref{sec:eigenfunction-basis} (using $\overline{\cL}^{KF} = \mathcal{L}^{KF,ss}$). Practically, we approximate the eigenfunctions by eigenvectors of the finite difference KFE matrix. We pick a total of $7$ eigenvectors, which results in a preliminary basis $\tilde{b}_0^0,\tilde{b}_1^0,\dots,\tilde{b}_6^0$.

    \item In a second step, we impose the restriction that the marginal distributions in the $l$-dimension are always in line with the ergodic distribution of the $l_t^i$ process. This restriction ensures that effective labor is constant over time. It also reduces the dimension of the basis by $1$. After imposing the restriction, we are thus left with a reduced basis $\tilde{b}_0,\tilde{b}_1,\dots,\tilde{b}_5$, where $\tilde{b}_0=\tilde{b}_0^0$ and $\tilde{b}_1,\dots,\tilde{b}_5$ are each linear combinations of the original eigenfunctions $\tilde{b}_1^0,\dots,\tilde{b}_6^0$.

    \item In a third step, we make a change of variables that rotates the basis $\tilde{b}_0,\tilde{b}_1,\dots,\tilde{b}_5$ but leaves the set of densities that can be approximated unaffected. Specifically, we first find the representing vector in $\operatorname{span}\{\tilde{b}_1,...,\tilde{b}_5\}$ for the linear functional
    $$K(g):=\int{ag(a,l_1)da} + \int{ag(a,l_2)da},$$
    call it $b_1$ (existence is ensured by the Riesz representation theorem). We then select four more vectors out of $\tilde{b}_1,...,\tilde{b}_5$ such that together with $b_1$ we obtain again a basis of the space and then project those four vectors onto the orthogonal complement of $b_1$. Call the resulting vectors $b_2,...,b_5$. Also define $b_0:=\tilde{b}_0$. For our projection of the distribution, we work with the resulting basis $b_0,...,b_5$. This rotation helps us in sampling because the coefficient $\hat{\varphi}_1$ on $b_1$ fully controls the aggregate capital stock implied by a given distribution approximation vector $\hat{\varphi}$,\footnote{By construction, the basis vectors $b_2,...,b_5$ are orthogonal to $b_1$. Because $b_1$ is the representing vector for the $K$-functional, $K(b_2)=\cdots=K(b_5)=0$, so these components do not contribute to the mean of the distribution.} so that it is relatively straightforward to implement a variant of moment sampling.
\end{enumerate}

For the law of motion of the distribution approximation $\hat{\varphi}$, we adapt the generic approach outlined in the main text as follows for the KS model. We choose a single ``informative'' statistic whose evolution we seek to match perfectly, the aggregate capital stock. This is motivated by the well-known fact that this is the most important aspect of the distribution in the KS model. Given our particular basis, matching this dimension of the distribution evolution effectively determines the evolution of $\hat{\varphi}_1$. To determine the evolution of the remaining components $\hat{\varphi}_2,...,\hat{\varphi}_5$, we use an ``uninformative'' uniform discretization of the individual state space by choosing a 101-point grid in the $a$-dimension and require that the KFE is well-approximated (in a least-squares sense) on the resulting 101$\times$2-point grid of $(a,l)$-pairs. In total, this means we choose 203 ``test functions''. The first is the ``informative'' test function
$$\phi_1(a,l)=a,$$
which selects the first moment in the $a$-dimension, i.e. $K(g)$. The remaining test functions are the Dirac $\delta$-distributions
$$\phi_m=\begin{cases}\delta_{(a_{m-2},l_1)},&m\leq 102\\\delta_{(a_{m-103},l_2)},&m\geq 103\end{cases},\qquad m=2,3,\dots,203,$$
where $a_0<a_1<\cdots < a_{100}$ denotes the chosen grid points. Relative to the generic procedure outlined in the main text, we also choose to use a weighted least-squares procedure when minimizing the residuals that puts a very large -- in fact, infinite -- weight on the $\phi_1$-residual so as to match the capital evolution perfectly. The weight on all other test function residuals is chosen to be the same.

While the previous description implies a precise definition of $\mu_{\hat{\varphi},1},...,\mu_{\hat{\varphi},5}$ (up to the approximation of integrals on the discrete grid), the specific nature of our approximation allows us to compute many of the test function integrals analytically. In practice, we therefore compute $\mu_{\hat{\varphi},1},...,\mu_{\hat{\varphi},5}$ in a two-step procedure as follows:

First, we determine the law of motion of the coefficient $\hat{\varphi}_1$ that governs the evolution of aggregate capital $K_t$, so that we match the law of motion of $K_t$ exactly. Specifically, we start by computing the forward evolution
$$\mu_K(z,\hat{\varphi}) = \left.\frac{dK_t}{dt}\right|_{z_t=z,g_t=\hat{G}(\hat{\varphi})} = Y(\hat{\varphi},z) - \sum_{i=1,2}{\int{\hat{c}^\ast(a,l_i)\hat{G}(\hat{\varphi})(a,l_i)da}}-\delta K(\hat{G}(\hat{\varphi})),$$
where the integrals $\int{\hat{c}^\ast(a,l_i)\hat{G}(\hat{\varphi})(a,l_i)da}$ are approximated using quadrature over the 101-point grid in the $a$-dimension on which the basis functions are known. We then determine the law of motion for $\hat{\varphi}_1$ as follows:
$$\mu_{\hat{\varphi},1}(z,\hat{\varphi}) = \frac{\mu_K(z,\hat{\varphi})}{K(b_1)}$$

For the remaining components $\hat{\varphi}_2,...,\hat{\varphi}_5$, the $\delta$-test function choice implies that vector on the left-hand side of the regression that determines $\mu_{\hat{\varphi},2},...,\mu_{\hat{\varphi},5}$ is given by the vector of KFE drifts at the 202 $(a,l)$-grid points. Because we know the basis functions, and hence the density $\hat{g}$, only on the grid, we inevitably have to approximate the derivatives in those KFE drifts somehow. As in the discrete state space method, we choose a finite difference method to do so. Specifically, we first compute the finite difference approximation of the KFE precisely as in the discrete state space method on the 101$\times$2-point grid on which the basis functions are known. Call the resulting 202-dimensional vector $\mu_g^{DS}(z,\hat{\boldsymbol{g}})$, where $\hat{\boldsymbol{g}}=\hat{\boldsymbol{G}}(\hat{\varphi})$ is the density $\hat{g}=\hat{G}(\hat{\varphi})$ on the 101$\times$2-point grid (which corresponds to the distribution approximation in the discrete state method).\footnote{
    Specifically, this means that if $\mu_{\hat{\varphi}}^{DS}(z,\hat{\varphi})$ denotes the drift expression for the distribution approximation in the discrete state space method as defined in equation~\eqref{eq:kfe-finite-diff-achdouetal} (but for the different grid used here), then we define $\mu_g^{DS}(z,\hat{\boldsymbol{g}}):=\mu_{\hat{\varphi}}^{DS}(z,\hat{\boldsymbol{g}})$. Note that, in the context of the discrete state method, the vector $\hat{\varphi}$ describes the values of the density on the 101$\times$2-point grid, so plugging in $\hat{\boldsymbol{g}}$ for it is a well-defined operation. However, in the present context, $\hat{\varphi}$ has a different meaning (coefficients in the projection), so that we use the notation $\mu_g^{DS}(z,\hat{\boldsymbol{g}})$ instead of $\mu_{\hat{\varphi}}^{DS}(z,\hat{\boldsymbol{g}})$ to avoid any confusion.
} We then determine the drifts $\mu_{\hat{\varphi},2}(z,\hat{\varphi}),...,\mu_{\hat{\varphi},5}(z,\hat{\varphi})$ as the coefficients of a linear regression (orthogonal projection) of $\mu_g^{DS}(z,\hat{\boldsymbol{G}}(\hat{\varphi}))$ on the basis vectors $b_2,...,b_5$.

With these choices, the master equation can be written as:
\begin{align}
    0 = \cL \hat{W}((a,l), z, \hat{\varphi}) ={}& (r(z,\hat{\varphi})-\rho) \hat{W}((a,l), z, \hat{\varphi}) + \textbf{1}_{a \leq a_{lb}} \partial_a \psi(a) \\
    {}& + (\cL_x + \cL_z + \hat{\cL}_g) \hat{W}((a,l),z,\hat{\varphi}) \label{eq:Master-KS}
\end{align}
where the operators become:
\begin{align}
    \cL_x \hat{W}((a,l), z, \hat{\varphi}) ={}& 
    s((a,l),\hat{c}^\ast((a,l),z, \hat{\varphi}),r(z,\hat{\varphi}),w(z,\hat{\varphi})) \partial_{a} \hat{W}((a, l), z, \hat{\varphi}) \\
    {}& +\lambda(l)\left(\hat{W}((a, \check{l}), z, \hat{\varphi})-\hat{W}((a, l), z, \hat{\varphi})\right)  \\
    \cL_z \hat{W}((a,l), z, \hat{\varphi}) ={}& \partial_z \hat{W}((a,l),z,\hat{\varphi})\eta(\overline{z} - z) + \frac{1}{2} \sigma^2 \partial_{zz} \hat{W}((a,l),z,\hat{\varphi})  \\
    \hat{\cL}_g \hat{W}((a,l),z,\hat{\varphi})
        ={}& \sum_{n=1}^{5} \mu_{\hat{\varphi},n}(z,\hat{\varphi}) \,\partial_{\hat{\varphi}_n} \hat{W}((a,l),z,\hat{\varphi})
\end{align}

\medskip

\subsection{Implementation Details} \label{asec:implementation_details}

In this section, we go through the implementation details for each of the different approaches.

\subsubsection{Network Structure\label{asec:implementation_details:network_structure}}

\noindent\emph{Finite Agent Agent Approximation:} We use a fully connected feed-forward neural network with $5$ layers and $64$ neurons per layer.
We use a $\tanh$ activation function between layers and a soft-plus activation at the output level. 
We initialize the neural network so that $\bW(a, \cdot)$ has an exponential shape with negative exponent. This is done through a pre-training phase. 
\\

\noindent\emph{Discrete State Space Approximation:} The neural network for approximating $\hat{W}=\partial_a \hat{V}$ combines three steps to map the input data $\hat{X}:=\{x, z, \hat{\varphi}\}$ into an output $\bW(\hat{X};\Theta_W)$. We describe these steps separately:

In a first step, an ``embedding'' network transforms the component $\hat{\varphi}$ into a 10-dimensional output $\hat{\varphi}^\prime$ by feeding it through a fully connected feed-forward network as described in Section~\ref{sec:solution:network}. This embedding network has 2 layers and 128 neurons per layer. We use a tanh activation function in the hidden layers and the identity function in the output layer. Denote by $\Theta_W^e$ the collection of parameters for this network.

In a second step, we apply a recurrent network as proposed by \cite{Sirignano2018} to the modified input data $\hat{X}^\prime := \{x, z, \hat{\varphi}^\prime\}$, which results from $\hat{X}$ by replacing the distribution approximation $\hat{\varphi}$ with the output $\hat{\varphi}^\prime$ of the embedding network. Specifically, the structure of the \cite{Sirignano2018} network is as follows:
\begin{align}
\begin{aligned} \label{nn:structure:dgm}
    h^{(0)} ={}& \phi^{(0)}( W^{(0)} \hat{X}^\prime + b^{(0)}) &&& \ldots \text{Hidden layer 0}\\
    f^{(1)} ={}& \phi^{(1)}\left(W^{f,(1)}h^{(0)}+U^{f,(1)}\hat{X}^\prime+b^{f,(1)}\right) &&& \ldots \text{Hidden layer 1}\\
    g^{(1)} ={}& \phi^{(1)}\left(W^{g,(1)}h^{(0)}+U^{g,(1)}\hat{X}^\prime+b^{g,(1)}\right) &&& \ldots \text{Hidden layer 1}\\
    r^{(1)} ={}& \phi^{(1)}\left(W^{r,(1)}h^{(0)}+U^{r,(1)}\hat{X}^\prime+b^{r,(1)}\right) &&& \ldots \text{Hidden layer 1}\\
    s^{(1)} ={}& \phi^{(1)}\left(W^{s,(1)}(r^{(1)}\odot s^{(0)})+U^{s,(1)}\hat{X}^\prime+b^{s,(1)}\right) &&& \ldots \text{Hidden layer 1}\\
    h^{(1)} ={}& (1-g^{(1)})\odot s^{(1)}+f^{(1)}\cdot h^{(0)} &&& \ldots \text{Hidden layer 1}\\
    \vdots \\
    f^{(H)} ={}& \phi^{(H)}\left(W^{f,(H)}h^{(H-1)}+U^{f,(H)}\hat{X}^\prime+b^{f,(H)}\right) &&& \ldots \text{Hidden layer $H$}\\
    g^{(H)} ={}& \phi^{(H)}\left(W^{g,(H)}h^{(H-1)}+U^{g,(H)}\hat{X}^\prime+b^{g,(H)}\right) &&& \ldots \text{Hidden layer $H$}\\
    r^{(H)} ={}& \phi^{(H)}\left(W^{r,(H)}h^{(H-1)}+U^{r,(H)}\hat{X}^\prime+b^{r,(H)}\right) &&& \ldots \text{Hidden layer $H$}\\
    s^{(H)} ={}& \phi^{(H)}\left(W^{s,(H)}(r^{(H)}\odot s^{(H-1)})+U^{s,(H)}\hat{X}^\prime+b^{s,(H)}\right) &&& \ldots \text{Hidden layer $H$}\\
    h^{(H)} ={}& (1-g^{(H)})\odot s^{(H)}+f^{(H)}\cdot h^{(H-1)} &&& \ldots \text{Hidden layer $H$}\\
    o ={}& W^{(H+1)} h^{(H)} + b^{(H+1)} &&& \ldots \text{Output layer} \\
    \hat{Y} ={}& \phi^{(H+1)}(o) &&& \ldots \text{Output}
\end{aligned}
\end{align}
Here, $\{h^{(i)}\}_{0\leq i\leq H}$ denote the $H+1$ hidden layers and $\{f^{(i)},g^{(i)},r^{(i)},s^{(i)}\}_{1\leq i\leq H}$ are auxiliary variables required to compute the neuron value in each layer. $W^{(0)}$, $\{W^{f,(i)},W^{g,(i)},W^{r,(i)},W^{s,(i)}\}_{1\leq i\leq H}$, and $\{U^{f,(i)},U^{g,(i)},U^{r,(i)},U^{s,(i)}\}_{1\leq i\leq H}$ are the weight matrices of the network, whereas $b^{(0)}$ and $\{b^{f,(i)},b^{g,(i)},b^{r,(i)},b^{s,(i)}\}_{1\leq i \leq H}$ are the biases. The operator $\odot$ denotes the element-wise product (Hadamard product) of vectors. We choose $H=3$ and $100$ neurons per layer, which means that the dimension of each of the vectors $h^{(i)}$, $f^{(i)}$, $g^{(i)}$, $r^{(i)}$, and $s^{(i)}$ is 100. We use a tanh activation function in the hidden layers and an elu activation function in the output layer to ensure positivity of the output. The trainable parameters $\Theta_W^r$ of this network consists of the collection of all weights and biases of the network. 

In a final third step, we transform the output $\hat{Y}$ from the recurrent network into the approximate value for $W$ as follows:
$$\bW = \hat{Y} (a_{0}+a)^{-\tilde{\eta}}.$$
Here $a_0$ and $\tilde{\eta}$ are non-trainable parameters. The additional factor $(a_0+a)^{-\tilde{\eta}}$ is motivated by the hyperbolic shape of the marginal value function. Its inclusion helps improving the overall accuracy of the approximation for a given neural network size. We make the selection $a_0=10$ and $\tilde{\eta}=0.5$.

The trainable parameters of the network $bW$ consists of the collection $\Theta_W=\{\Theta_W^e,\Theta_W^r\}$ of all trainable parameters from the first and second step. We initialize all bias parameters to zero and all weight parameters randomly according to a uniform Xavier initialization \citep{GlorotBengio2010}.

In addition to the neural network for $\hat{W}$, we also introduce an auxiliary network for the consumption function $\hat{c}^\ast$, $\bC(\hat{X};\Theta_c)$. The structure of $\bC$ precisely mirrors that of the network for $\bW$ (including the number of layers and neurons in each sub-network), except that we omit the third step. For initialization of the parameters $\Theta_c$, we follow the same approach as for $\Theta_W$.\\

\noindent\emph{Projection Approximation:}
We use the same network structure and parameter initialization as for the discrete state space approximation. The only difference is that we choose a smaller embedding network, both for approximating $\hat{W}$ and for approximating $\hat{c}^\ast$: instead of 128 neurons per layer, the embedding network for this approximation has only 64 neurons per layer.

\subsubsection{Sampling\label{asec:implementation_details:sampling}}

\noindent\emph{Finite Agent Approximation:} 
We sample points of the form $\{(a^i,n^i), (a^j,n^j)_{j \in (I-1)}\}$ on the interior of the state space.
For the idiosyncratic variable, $a^i$, we sample using an active sampling technique similar to those developed by \cite{Gopalakrishna2021} and \cite{lu2021deepxde}.
We start active sampling after 2,000 epochs to balance speed with accuracy.\footnote{
    The loss has a steeper drop after we start active learning in the 2000th epoch, as shown in Figure \ref{fig:loss_path_FA_Robustness}.}
Once we start active sampling, the interval $[a_{min}, a_{max}]$ is evenly partitioned into $2^{4}$ subintervals.  
We calculate the residual error in each subinterval then add $2^{4}$ points to the subinterval with the largest residual, $2^3$ points to the neighboring subinterval with the largest error, and $2^2$ to the other neighboring subinterval.
We sample the aggregate variable $z$ uniformly from interval $[z_{min},z_{max}]$.
For sampling the population of agents, $(a^j,n^j)_{j \in (I-1)}$, we first generate a random interest rate from a uniform distribution on $[r_{lb},r_{rb}],$ with $r_{lb}=0.01,r_{rb}=0.05$.
We then generate a random distribution of agents from a Latin hypercube on $[a_{min}, a_{max}]$ and scale their individual wealth so the equilibrium interest rate is the randomly drawn interest rate for the drawn aggregate TPF $z$.
\\

\noindent\emph{Discrete State Space Approximation:}
We first sample points of the form $(a,\hat{\varphi},z)$ randomly and then construct the full sample for points of the form $((a, l), \hat{\varphi}, z)$ by using each originally sampled point twice, once in combination with $l=l_1$ and once in combination with $l=l_2$. We sample the three dimensions of $(a,\hat{\varphi},z)$ independently as follows:
\begin{itemize}
  \item We sample $a$ from a uniform distribution over the domain $[a_{min},a_{max}]$ (without active sampling).
  \item We sample $z$ from a uniform distribution over the domain $[z_{min},z_{max}]$ as for the finite agent method.
  \item For the sampling of $\hat{\varphi}$, we use a mixture of two sampling schemes, (i) a variant of mixed steady state sampling and (ii) ergodic sampling based on the current approximation for $\hat{W}$. For sampling scheme (i), we use a ``degenerate'' mixture based on just a single steady-state solution (for $z=0$). Denote by $\hat{g}^{ss}$ the vector of steady-state density values on the discrete state grid. We take as sample points $\hat{\varphi}(a_i,l_i) = \frac{\omega_{i} \hat{g}^{ss}(a_i,l_i)}{\sum_{j=1}^{N_x} \omega_{j} \hat{g}^{ss}(a_j,l_j)}$, where the $\omega_{i}$ are i.i.d. with uniform distribution over an interval of the form $[1-d_g, 1+d_g]$, with $d_g \in (0,1)$. We gradually increase the proportion of the sample that is according to (ii) from 0\% to 90\% during training.\\
\end{itemize}

\noindent\emph{Projection Approximation}
We follow precisely the same sampling approach as in the discrete state space approximation, except for the distribution dimension $\hat{\varphi}$. We therefore only discuss the latter here. We sample the first component of the distribution, $\hat{\varphi}_1$, separately as this component exclusively controls the aggregate level of the capital stock (compare Appendix~\ref{asec:master_equations}). We sample aggregate capital $K$ from a uniform distribution over $[0.9K^{ss},1.1K^{ss}]$, where $K^{ss}$ is the steady-state level of capital in the model without common noise and $z=0$, and we adjust $\hat{\varphi}_1$ to match the sampled capital stock values. We sample the remaining four components $\hat{\varphi}_2,...,\hat{\varphi}_5$ by combining uniform sampling from a hypercube centered around zero and ergodic sampling. We gradually increase the proportion of ergodic sampling from 0\% to 80\% during training.

\subsubsection{Loss Function\label{asec:implementation_details:loss_function}}

For all three approximation method we impose penalties to impose the shape constraints $\partial_a W < 0$ and $\partial_z W < 0$ by choosing the shape error as follows:
$$\cE^s(\Theta^n, S^{n}) := \frac{1}{|S^n|} \sum_{(a,l,z,\hat{\varphi})\in S^n} \left(|\max\{\partial_a\bW(a,l,z,\hat{\varphi};\Theta^n),0\} |^2 + |\max\{\partial_z\bW(a,l,z,\hat{\varphi};\Theta^n),0\} |^2\right)$$
We combine this with the equation residual error $\cE^e(\Theta^n, S^{n})$ for the total error $\cE(\Theta^n, S^{n})$ as described in Algorithm~\ref{alg:generic}. We choose as weights $\kappa^e=100,\kappa^s=1$ for the finite agent method and $\kappa^e=\kappa^s=1$ for the other two methods. Ultimately, we find that the relative weights of the loss components only matter in early training when the shape constraints are occasionally violated. In late training, shape constraints are typically always satisfied, so that the weights $\kappa^e,\kappa^s$ have little relevance for training.

\subsection{Calibration} \label{subsec:calibration}

As was discussed in the main text, a potential benefit of deep learning algorithms is that we can include the parameters, $\zeta$, as additional inputs into the neural network:
\begin{align}
    \hat{V}(\hat{X}, \zeta) \approx \bV(\hat{X}, \zeta; \theta)
\end{align}
and then train the neural network using sampling from both $\hat{X}$ and $\zeta$.
We can then use $\bV$ to calculate the moments for different parameters and calibrate the model.
We illustrate this technique for the finite agent method by calibrating the \cite{Krusell1998} model to match a stochastic steady state capital-to-labor ratio of 5.0.
We do this by including the borrowing constraint $a_{lb}$ as an input into the neural network, training the model randomly sampling $a_{lb}$ from $[0, 1.5]$, and then using the trained model to solve for the $a_{lb}$ that generates a stochastic steady state capital-to-labor ratio of 5.0.
We show the results in Table~\ref{tab:ks_calibration}.

\begin{table}[H]
    \centering
    \begin{tabular}{ccc}
     & $K/L$& \\     Target&Model&$a_{lb}$\\\hline
     5.0&5.0& $1.082$ \\
    \end{tabular}
    \caption{Neural Networks' results for Calibration}
    \label{tab:ks_calibration}
\end{table}

\subsection{Transition Dynamics for MIT Shocks} \label{subsubsec:transition_dynamics_ABH}

We now consider the transition path following an unexpected shock to aggregate productivity (a so-called ``MIT'' shock).
In principle, an important advantage of having a global solution, i.e., $c^*$ as a function of the distribution, is that we can solve for the transition path without a ``shooting algorithm'', as is commonly done in the finite difference literature.
This is an advantage because shooting algorithms can be unstable, particularly for systems with a large number of prices that require complicated guesses for the price path.
However, in practice, attempting this only makes sense for the finite agent approximation since the other methods require ergodic sampling during training and so have difficulty with unanticipated shocks.
For this reason, we focus on the finite agent method in this section.

In order to compute the response to an MIT shock we need to compute the evolution of the distribution.
This is complicated for the finite agent method because the neural network policy rules are functions of the positions of the $N$ other agents rather than a continuous density.
To overcome this difficulty, we deploy the ``hybrid'' approach described in Algorithm \ref{alg:transition} that uses the neural network solution to approximate a finite difference approximation to the KFE.
Let $\underline{\boldsymbol{a}}= (a_m : m \le M)$ denote the grid in the $a$-dimension.
Let $\underline{\boldsymbol{g}}_{t,j}= (g_{m,j,t} : m \le M)$ denote the marginal density at labor $l_j$ on the $a$-grid and $\underline{\boldsymbol{g}}_{t}$ denote the density.
At each time step, our method draws $N_{sim}$ different samples of $N$ agents from from the current density $g_t$.
For each draw $k \le N_{sim}$, denoted by $\hat\varphi_t^k = ((a_i, l_i) : 1 \le i \le N)$, the KFE is replaced by the following finite difference equation:
\begin{equation}
    \label{eq:KFE-finite-g-achdouetal-transition}
    dg_{m,j,t} = \mu_{g,m,j}(\hat{\varphi}_t^k)dt, \qquad m \le M, j=1,2,
\end{equation}
where the drift at point $(m,j)$ is defined by
\begin{align}
    \mu_{g,m,j}(\hat\varphi^k ) :={}& -(\hat{\partial}_a[  \boldsymbol{s}(\hat{\varphi}^k)) \odot \underline{\boldsymbol{g}} ])_{m,j} 
    + \lambda(l_{\check{j}})g_{m,\check{j}} - \lambda(l_j) g_{m,j}.
\end{align}
Here, $\check{j} = 2$ for $j=1$ and $\check{j}=1$ for $j=2$, the vector of saving flows $\boldsymbol{s}(\hat{\varphi}^k)\in \mathbb{R}^{M\times 2}$ is
$$
    \boldsymbol{s}_{m,j}(\hat{\varphi}^k) := s((a_m,l_j),\hat{c}^\ast((a_m,l_j),\hat\varphi^k),r(\hat{\varphi}^k),w(\hat{\varphi}^k)),
$$
and the first order derivative is approximated using an upwind scheme:
\[
    (\hat{\partial}_a[\boldsymbol{s}\odot \underline{\boldsymbol{g}}])_{m,i} := \frac{\boldsymbol{s}_{m-1,j}^{+}g_{m-1,j}-\boldsymbol{s}_{m,j}^{+}g_{m,j}}{\Delta a}+\frac{\boldsymbol{s}_{m+1,j}^{-}g_{m+1,j}-\boldsymbol{s}_{m,j}^{-}g_{m,j}}{\Delta a}
\]
where $(\boldsymbol{s}_{m,j})^+$ and $(\boldsymbol{s}_{m,j})^-$ denote the positive and negative part of $\boldsymbol{s}_{m,j}$, respectively, and we use the convention that the component of a vector is zero if its index is outside the boundaries of the original vector (which can happen for $m=1$ or $m=M$).
From this approximation we can calculate the transition matrix $\cA_{t,k}$ for the finite difference approximation at the draw $\varphi^k$.

\begin{algorithm}[h]
\caption{Finding Transition Path by Neural Network: Aiyagari case} \label{alg:transition}
    \SetKwInOut{Input}{Input}
    \SetKwInOut{Output}{Output}
    \Input{Initial distribution $g_0$, neural network approximations for consumption and prices $(\hat{c}, \hat{r}, \hat{w})$, number of agents $N$, time step size $\Delta t$, number of time steps $N_{T}$, number of simulations $N_{sim}$, grid $\underline{\boldsymbol{a}}= \{a_m : m \le M\}$ for the finite difference approximation.} 
    \Output{A transition path $g = \{g_t : t = 0, \Delta t, \dots, N_T \Delta t\}$}

    \setstretch{1.05}
    \vspace{0.25cm}
    
    \For{$n = 0,\dots, N_T-1$}{
        \For{$k = 1, \dots, N_{sim}$}{
            Draw states for $N$ agents $\{\varphi_i^k : i=1,\dots,N\}$ from density $g_t$ at $t = n \Delta t$.
            
            Given state $\varphi^k$, compute equilibrium return $\hat{r}(\varphi^k)$ and wage $\hat{w}(\varphi^k)$.

            At each grid point $a_m \in \underline{\boldsymbol{a}}$, calculate the consumption $\hat{c}((a_m,l),\varphi^k)$, $\forall l$.
            
            Construct the transition matrix $\mathcal{A}_{t,k}$ using finite difference on the grid $\underline{\boldsymbol{a}}$, as described by \eqref{eq:KFE-finite-g-achdouetal-transition} and the subsequent equations.
        }
    Take the average: $\Bar{\mathcal{A}}_t=\frac{1}{N_{sim}}\sum_{k=1}^{N_{sim}}\mathcal{A}_{t,k}$.
        
    Update $g_t$ by implicit method: $g_{t+ \Delta t} = (I-\Bar{\mathcal{A}}_t^\top \Delta t)^{-1}g_{t}$.
    }
\end{algorithm}

This approach is very different to the finite difference approach in \cite{achdou2022income}, which is summarized in Algorithm \ref{alg:finite_diff}.
This is because our neural network transition paths do not require an outer ``guess-verify'' loop, as in the \cite{achdou2022income} shooting algorithm.

\vspace{6pt}

\begin{algorithm}[H]
\caption{Finding Transition Path by Finite Difference} \label{alg:finite_diff}
\begin{enumerate}
    \item Guess the path of equilibrium interest rate $r^o_t$, then solve HJB, with terminal condition: $v(a,l,T) = \Bar{v}(a,l)$
    \vspace{-0.2cm}
    \item Solve the policy function $c_t(a,l)$.
    \vspace{-0.2cm}
    \item Solve the forward equation, with initial condition $g_0(a,l)$.
    \vspace{-0.2cm}
    \item Calculate the capital held by the whole economy: $\sum_{j \in \{0,1\}}\int_{\underline{a}}^{\infty}ag_t(a,l_J) = K_t$, then calculate the implied interest rate by $r_t^n=\partial_KF(K_t,L)-\delta$, for every $t\in[0,T]$.
    \vspace{-0.2cm}
    \item Update the path to be $\lambda r^o_t + (1-\lambda) r^n_t$, repeat 1-4 until $||\mathsf{r}^o - \mathsf{r}^n||_{\infty}<\epsilon$.
\end{enumerate}
\end{algorithm}

\vspace{6pt}

We compare the neural network and finite difference transition paths in Figure \ref{fig:aiy:fa:transition} below.
In this numerical experiment, we train our neural network at $z = 0$ and we start from an economy in its steady state with productivity $z_t=-0.10$ for $t=0$. 
At $t = 0^{+}$, an unexpected positive productivity shock brings $z$ from $-0.10$ to $z_t = 0$ permanently. 
We solve for distributional dynamics using Algorithm \ref{alg:transition} and Algorithm \ref{alg:finite_diff} using the steady state at $z=-0.10$ as the initial condition. We plot the percentage change of capital, capital return and wage evolution respectively in the first row and the second row of Figure \ref{fig:aiy:fa:transition}. The difference between the neural net transition paths and finite difference transition paths are less than 0.1\%. 
The lower panels of Figure \ref{fig:aiy:fa:transition} compare the neural network and finite difference probability densities at time $t=15$ and $t= 30$, which are also very similar.

\begin{figure}[hbtp]
    \centering
    \includegraphics[width=\textwidth]{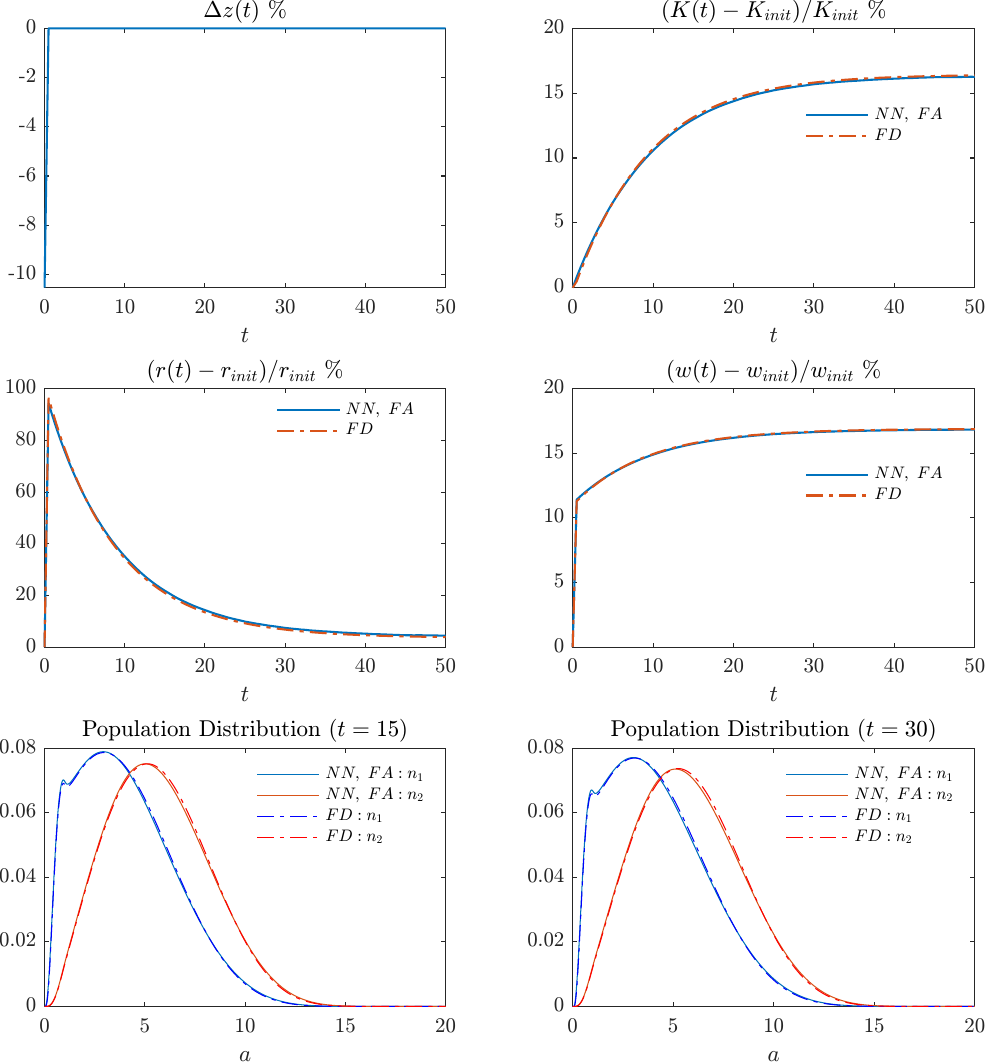}
    \caption{\small Comparison between neural network and finite impulse response for the Aiyagari model. The top left plot is the TFP shock path, the top right panel is the aggregate relative capital change, the middle left panel plots the relative capital return change, and the middle right panel plots the relative wage change. The bottom left and bottom right are snapshots of probability density at $t=15$ and $t=30$. \textit{``NN, FA''} refers to the finite agent neural network code, and the \textit{``FD''} refers to the finite difference code. Subscript \textit{init} is the initial value at the steady state $z = z(0)$.}
    \label{fig:aiy:fa:transition}
\end{figure}

\subsection{Additional Details on Simulating the KS Model} \label{subsec:KS_simulation}

We simulate from the KS model using an extended version of Algorithm \ref{alg:transition} that incorporates aggregate shocks.
We describe this in Algorithm \ref{alg:transition_KS}.

\begin{algorithm}[h]
\caption{Finding Transition Path by Neural Network: Krusell Smith case} \label{alg:transition_KS}
    \SetKwInOut{Input}{Input}
    \SetKwInOut{Output}{Output}
    \Input{Initial distribution, neural network approximations to the consumption rule $\hat{c}$, and pricing functions $(\hat{r}, \hat{w})$,  number of agents $N$, time step size $\Delta t$, number of time steps $N_{T}$, number of simulations $N_{sim}$, grid $\underline{\boldsymbol{a}}= \{a_m : m \le M\}$ for the finite difference approximation.}
    \Output{A transition path $g = \{g_t : t = 0, \Delta t, \dots, N_T \Delta t\}$}

    \setstretch{1.05}
    \vspace{0.25cm}
    
    \For{$n = 0,\dots, N_T-1$}{
        \For{$k = 1, \dots, N_{sim}$}{
        Sample $\Delta B^0_t$ from normal distribution ${N}(0, \Delta t)$, construct TFP shock path by: $z_{t+\Delta t} = z_{t} + \eta(\bar{z}-z_{t}) + \sigma  \Delta  B^0_t$.
    
        Draw states for $N$ agents $\{\varphi_i^k : i=1,\dots,N\}$ from density $g_t$ at $t = n \Delta t$.

        Given state $(z_{t+\Delta t}, \varphi_t^k)$, compute equilibrium return $\hat{r}(z_{t+\Delta t}, \varphi_t^k)$ and wage $\hat{w}(z_{t+\Delta t}, \varphi_t^k)$.

        At each grid point $a_m \in \underline{\boldsymbol{a}}$, calculate the consumption $\hat{c}((a_m,l),z_{t+\Delta t}, \varphi_t^k)$, $\forall l$.
    
        Construct the transition matrix $\mathcal{A}_{t,k}$ using finite difference on the grid $\underline{\boldsymbol{a}}$, as described \eqref{eq:KFE-finite-g-achdouetal-transition} and subsequent equations but with the $z_t$ state added.
        
    }
    Take the average: $\Bar{\mathcal{A}}_t=\frac{1}{N_{sim}}\sum_{k=1}^{N_{sim}}\mathcal{A}_{t,k}$
    
    Update $g_t$ by implicit method: $g_{t+ \Delta t} = (I-\Bar{\mathcal{A}}_t^\top  \Delta t)^{-1}g_{t}$
}
\end{algorithm}

\section{Additional Details on the Eigenfunction Basis for the Projection Technique (Online Appendix)}
\label{sec:eigenfunction-basis}

We would like to choose a basis such that just a few basis functions are enough to provide a good approximation. The key idea to achieve this is to track the slow-moving or persistent dimensions of $g_t$ while neglecting those dimensions that mean-revert fast. To understand why, recall that the only reason the distribution appears in the state space is because it helps the agent forecast future prices $q_t$. Components that mean-revert fast carry only little information about future prices beyond a short time horizon. Neglecting them induces a comparably small error into the agent's forecasts.

The persistent dimensions of the distribution are related to certain eigenfunctions of the differential operator characterizing the KFE~\eqref{eq:generic-KFE}. We can rewrite this equation as
\begin{align}%
    dg_t(x) ={}& (\cL^{KF}_t g_t)(x) dt
\end{align}
where, for generic distribution $f$ and point $x$,
\begin{align*}
\mathcal{L}^{KF}_tf(x) ={}& -\sum_{j=1}^{N_x} \partial_{x_j}\left[\mu_{x,j}(c^*(x,z_t,g_t), x, z_t, Q(z_t, g_t)) f(x)\right]\\
    {}& +  \frac{1}{2} \sum_{j,k=1}^{N_x} \partial_{x_j,x_k} \left[\Sigma_{x,jk}(c^*(x,z_t,g_t),x,z_t,Q(z_t,g_t)) f(x)\right] \\
    {}& + \lambda (f(x-\breve{\varsigma}(x,z_t,g_t))|I-D_x \breve{\varsigma}(x,z_t,g_t)| - f(x)).
\end{align*}
Notice that $\mathcal{L}^{KF}_t$ is a linear differential operator that generally depends on time and is stochastic due to the implicit dependence on $z_t$ and $g_t$. Constructing a (fixed) basis based on the eigenfunctions of $\mathcal{L}^{KF}_t$ is therefore not directly possible because the set of eigenfunctions would itself be time-dependent and stochastic.

Instead, our construction is based on a related time-invariant operator $\overline{\mathcal{L}}^{KF}$. $\overline{\mathcal{L}}^{KF}$ could take many forms. The simplest approach and the one we follow in our algorithm is to use $\overline{\mathcal{L}}^{KF} = \mathcal{L}^{KF,ss}$, where we define $\mathcal{L}^{KF,ss}$ in analogy to $\mathcal{L}^{KF}_t$ but for a simplified model with common noise set to zero, $\sigma^z\equiv 0$, and under the assumption that the aggregate states $z_t=\bar{z}$ and $g_t = g^{ss}$ have reached a steady-state. Another natural option would be to let $\overline{\mathcal{L}}^{KF}$ be the KFE operator that results from a linear perturbation of the model, but this would require solving an auxiliary problem first, in addition to the steady state problem, in order to obtain $\overline{\mathcal{L}}^{KF}$. In any case, we assume that $\overline{\mathcal{L}}^{KF}g^{ss} = 0$.\footnote{
    This is satisfied for both options for $\overline{\mathcal{L}}^{KF}$ discussed here. More generally, we can simply define $g^{ss}$ to be the steady state with respect to $\overline{\mathcal{L}}^{KF}$.
}

Irrespective of the precise choice of $\overline{\mathcal{L}}^{KF}$, there is a heuristic element in our basis construction that lies in the presumption that broadly the same dimensions of the distribution are relevant for the dynamics described by $\overline{\mathcal{L}}^{KF}$ and the true dynamics described by $\mathcal{L}^{KF}_t$. While generally plausible for the choices of $\overline{\mathcal{L}}^{KF}$ discussed above, if this requirement is not satisfied, then our proposed basis may not track the persistent dimensions of the true KFE dynamics well making the basis choice ``less efficient'', so that we may need a larger number of basis functions $N$ to achieve a good overall approximation quality.

Let us consider the eigenfunctions $\{b_i\,:\, i \ge 0\}$ of $\overline{\mathcal{L}}^{KF}$ with corresponding eigenvalues denoted by $\{\lambda_i \in \mathbb{C} \,:\, i \ge 0\}$. They satisfy:
\begin{equation}
	\overline{\mathcal{L}}^{KF} b_i = \lambda_i b_i, \qquad i \ge 0.
\end{equation}
Suppose $g^{ss}$ is the unique stationary distribution and the dynamics described by the KFE are locally stable around $g^{ss}$. Then the eigenvalue $\lambda_0=0$ exists and its associated eigenfunction is, up to scaling, $b_0 = g^{ss}$. All remaining eigenvalues satisfy $\Re \lambda_i<0$, so that these components of the distribution mean-revert to zero over time. Furthermore, a smaller $\Re\lambda_i$ is associated with a faster speed of mean-reversion. To be precise, suppose $g_t = g^{ss} + \sum_{i=1}^{\infty}{\varphi_{i,t}b_i}$ is the time-$t$ distribution expressed as a linear combination of all the eigenfunctions and suppose the time evolution of $g_t$ is described by the differential operator $\overline{\mathcal{L}}^{KF}$. Then,
$$g^{ss} + \sum_{i=1}^{\infty}{\frac{d\varphi_{i,t}}{dt}b_i} = 
\frac{dg_t}{dt} = \overline{\mathcal{L}}^{KF}g_t = \overline{\mathcal{L}}^{KF}g^{ss}  + \sum_{i=1}^{\infty}{\varphi_{i,t}\overline{\mathcal{L}}^{KF}b_i} = g^{ss}  + \sum_{i=1}^{\infty}{\varphi_{i,t}\lambda_ib_i}$$
Since the $b_i$ form a basis, the previous equation implies a system of ordinary differential equations for the coefficient functions $\varphi_{i,t}$, $i \ge 1$:
$$\frac{d\varphi_{i,t}}{dt} = \lambda_i\varphi_{i,t}.$$
The solution is given by $\varphi_{i,t} = \varphi_{i,0} e^{\lambda_i t}$, and therefore
\begin{align}
g_t = g^{ss} + \sum_{i=1}^{\infty}{\varphi_{i,0} e^{\lambda_i t}b_i}, \qquad t \ge 0.\label{eq:distribution_dynamics_eigenfunctions_and_linear_kfe}
\end{align}
Because $\Re \lambda_i<0$ for $i\geq 1$, all terms in the series on the right decay to zero as $t\rightarrow \infty$. Components corresponding to eigenvalues $\lambda_i$ with very negative real parts decay at a faster rate. As such, deviations from the stationary distribution $g^{ss}$ have a greater persistence if they are in the direction of eigenfunctions corresponding to eigenvalues that are (negative but) close to $0$. We therefore expect that a finite basis of eigenfunctions will provide a good approximation of the infinite sum if there are only a few ``significant'' eigenvalues that are close to $0$.

The previous considerations motivate our basis choice in the main text. To be precise, that basis choice means the following: we order with descending real parts, $\lambda_0=0>\Re\lambda_1>\Re\lambda_2>\dots$, and choose $b_0,b_1,...,b_N$ as the eigenfunctions corresponding to the first $N+1$ eigenvalues in this ordering as our basis. We remark here also that this choice of $b_0,b_1,...,b_N$ satisfies the required properties stated in equation~\eqref{eq:basisProperties} of Section~\ref{sec:finite_master:projection}. This is clear for $n=0$. For $n>0$, it follows form the fact that the operator $\overline{\mathcal{L}}^{KF}$ describes a mass-preserving evolution, which is only consistent with asymptotic decay over time if the integral is zero.

\begin{proof}[Proof of Proposition~\ref{prop:linear_kfe_eigenfunction_basis}]
The ``only if'' direction of the proposition essentially follows from equation~\eqref{eq:distribution_dynamics_eigenfunctions_and_linear_kfe} derived in this appendix if we set $\overline{\mathcal{L}}^{KF} := \mathcal{L}^{KF}$. Note that the additional assumptions made in the text, specifically that (i) $g^{ss}$ is the unique steady state of the operator and (ii) eigenvalues have negative real part, have been mentioned in the text only to make statements about the decay of certain components in the formula but are not necessary for the derivation of the formula itself. Therefore, the formula holds also under the assumptions of Proposition~\ref{prop:linear_kfe_eigenfunction_basis} if $g_t$ satisfies the KFE for all $t\geq0$. We now use this fact to prove the first direction in the equivalence statement. If $g_t$ satisfies the KFE, then, as just observed, it must also satisfy equation~\eqref{eq:distribution_dynamics_eigenfunctions_and_linear_kfe}. Furthermore, because $g_t-g^{ss}\in \operatorname{span}\{b_1,...,b_N\}$, it must be that $\varphi_{i,0}=0$ for all $i>N$. Hence, the infinite sum in  equation~\eqref{eq:distribution_dynamics_eigenfunctions_and_linear_kfe} collapses to the finite sum in equation~\eqref{eq:linear_kfe_eigenfunction_representation} stated in the proposition.

Conversely, if $g_t$ satisfied equation~\eqref{eq:linear_kfe_eigenfunction_representation} for all $t$, then taking the time derivative yields
$$\frac{dg_t}{dt} = \sum_{i=1}^{N}{\varphi_{i,0} e^{\lambda_i t}\lambda_i b_i} = \sum_{i=1}^{N}{\varphi_{i,0} e^{\lambda_i t} \mathcal{L}^{KF} b_i} = \mathcal{L}^{KF}\underbrace{\left(\sum_{i=1}^{N}{\varphi_{i,0} e^{\lambda_i t} b_i}\right)}_{=g_t - g^{ss}\text{ by \eqref{eq:linear_kfe_eigenfunction_representation}}} = \mathcal{L}^{KF}g_t.$$
Consequently, $g_t$ also satisfied the KFE for all $t\geq 0$.
\end{proof}

\section{Robustness (Online Appendix)}\label{sec:robustness}

\begin{figure}[h]
    \centering
    \includegraphics[width = .5\textwidth]{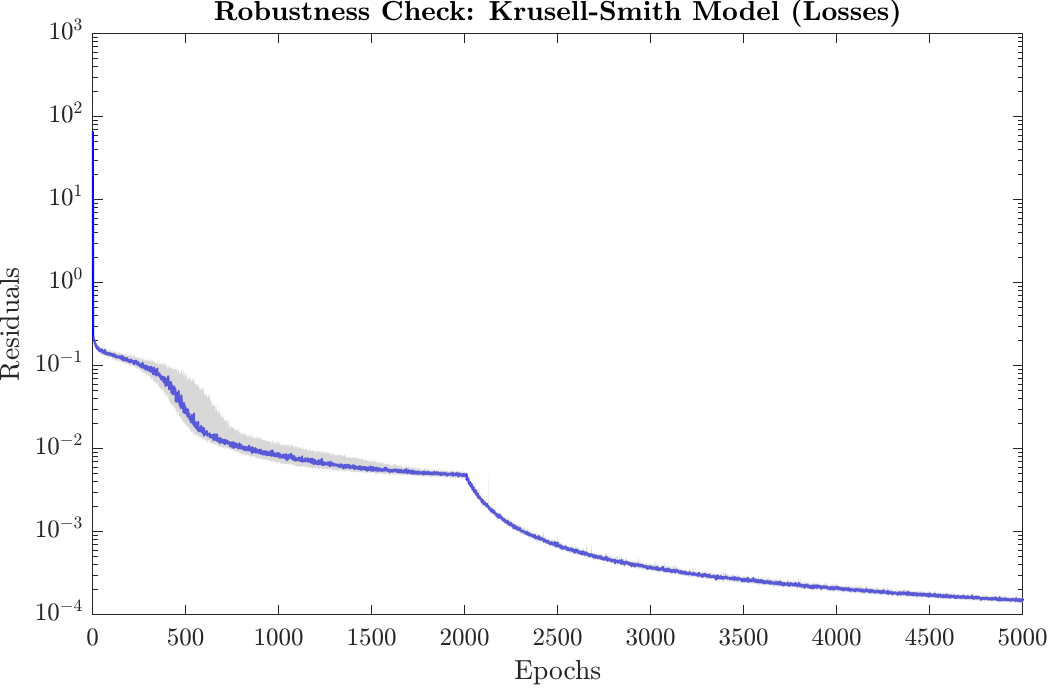}
    \caption{Training Loss vs Epochs Plots (Finite Agent Method). Shaded areas are 80\% confidence interval.}
    \label{fig:loss_path_FA_Robustness}
\end{figure}

\begin{figure}[h]
    \centering
    \includegraphics[width = .5\textwidth]{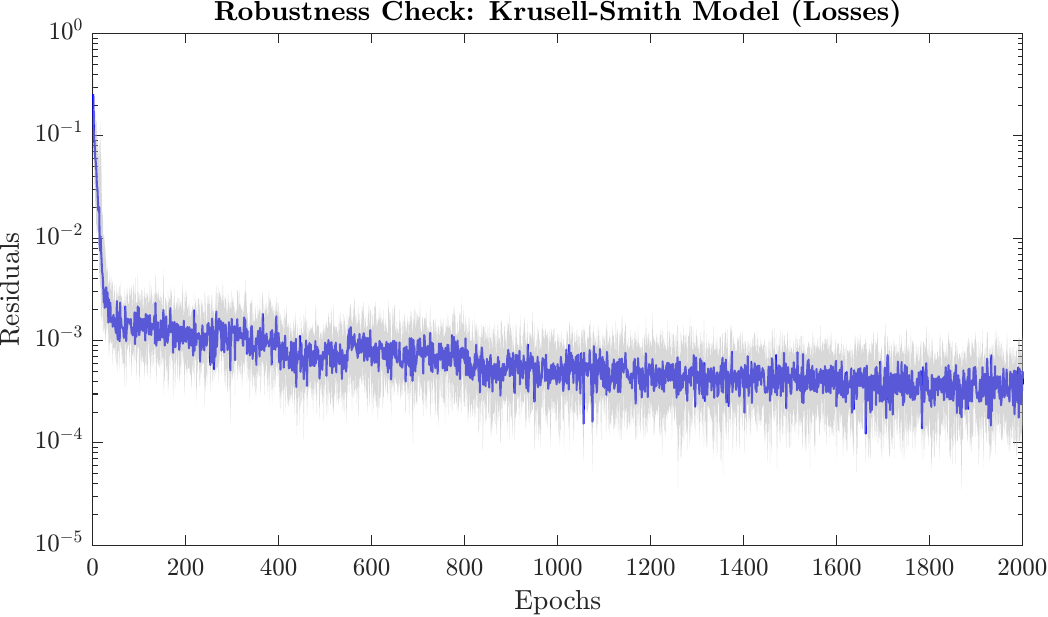}
    \caption{Training Loss vs Epochs Plots (Finite State Method). Shaded areas are 80\% confidence interval.}
    \label{fig:loss_path_FS_Robustness}
\end{figure}

\begin{figure}
    \centering
    \includegraphics[width = .5\textwidth]{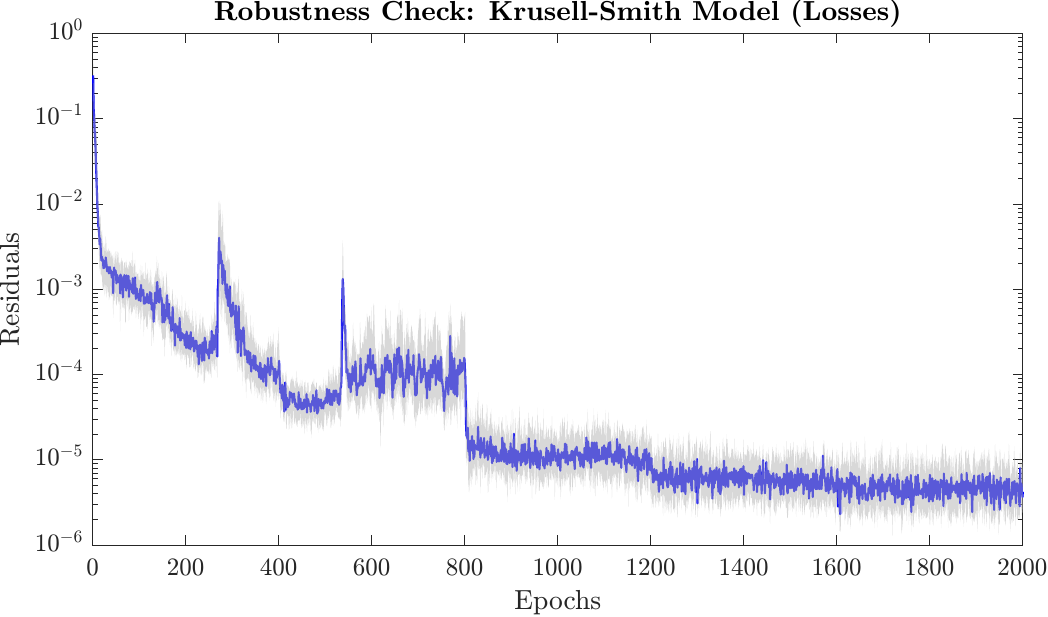}
    \caption{Training Loss vs Epochs Plots (Projection Method). Shaded areas are 80\% confidence interval.}
    \label{fig:loss_path_PROJ_Robustness}
\end{figure}

This section provides additional robustness checks for our solution techniques. 
For each method, we run the code 20 times with different random seeds and compare similarity of the neural nets at the end of the training. \\

\noindent\textit{Finite Agent Approach}. Figure \ref{fig:loss_path_FA_Robustness} shows the training losses (i.e., the residual of the PDE over distributions sampled during training) as a function of the training epoch. Each epoch corresponds to 16 steps of stochastic gradient descent. Figure \ref{fig:rstd_policy_FA_Robustness} shows the relative standard deviation for $dV$ and consumption as a function of $a$, at each of the two possible values of labor $l$.
We generate the ergodic distribution of $(z,g)$ by simulation.
We then draw 1000 samples of 40 agents from the ergodic distribution conditioned on $z = 0$.  
We use each trained neural network to evaluate $dV$ and $C$ (on a grid of 21 points between $a = 0$ and $a = 10$) for each 40 agent sample and then take the average. \\

\noindent\textit{Discrete State Approach}. Figure~\ref{fig:loss_path_FS_Robustness} shows the training loss (i.e., the residual of the PDE over distributions sampled during training) as a function of the training epoch. Here, one epoch corresponds to 20 steps of SGD. Figure~\ref{fig:rstd_policy_FS_Robustness} shows the relative standard deviation for $dV$ and the consumption as functions of $a$, in each of the two possible values of $y$. Again, we used a grid of $21$ points between $a=0$ and $a=10$. We evaluate $dV$ and consumption on the stochastic steady state for $g$ based on the finite difference code of \cite{Fernandez-Villaverde2018}.\\

\noindent\textit{Projection Approach}.
Figure~\ref{fig:loss_path_PROJ_Robustness} shows the training loss as a function of the training epoch. Here, one epoch corresponds to 20 steps of SGD. Figure~\ref{fig:rstd_policy_PROJ_Robustness} shows the relative standard deviation for $dV$ and the consumption as functions of $a$, in each of the two possible values of $y$. We used a grid of $21$ points between $a=0$ and $a=10$. We evaluate $dV$ and consumption on the same distribution $g$ as for the discrete state approach, except that we first project it onto the basis functions before we feed the coefficients into the neural network.

\begin{figure}
    \centering
    \includegraphics[width =\textwidth]{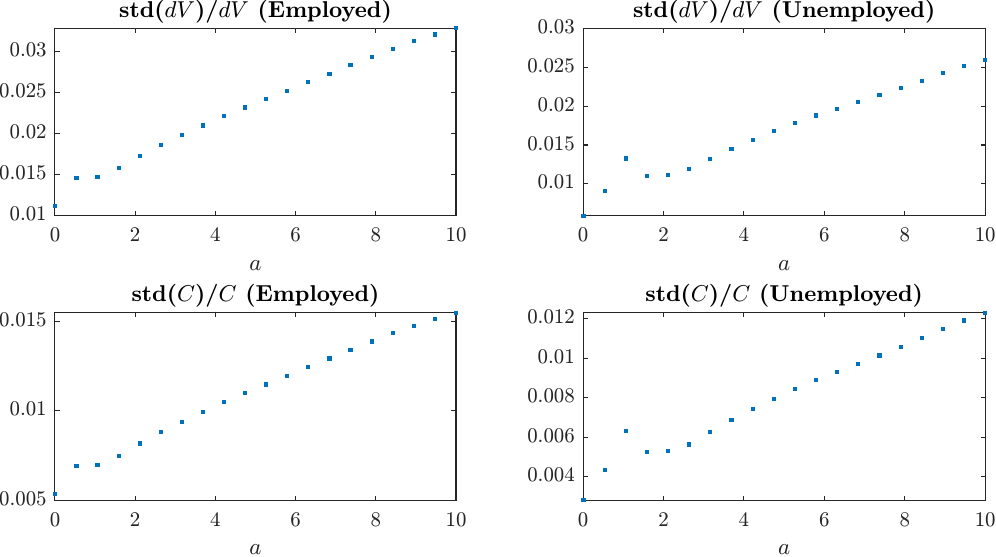}
    \caption{%
    Relative error (one standard deviation divided by mean) for value function and policy function for the finite agent approach. }
    \label{fig:rstd_policy_FA_Robustness}
\end{figure}

\begin{figure}
    \centering
    \includegraphics[width =\textwidth]{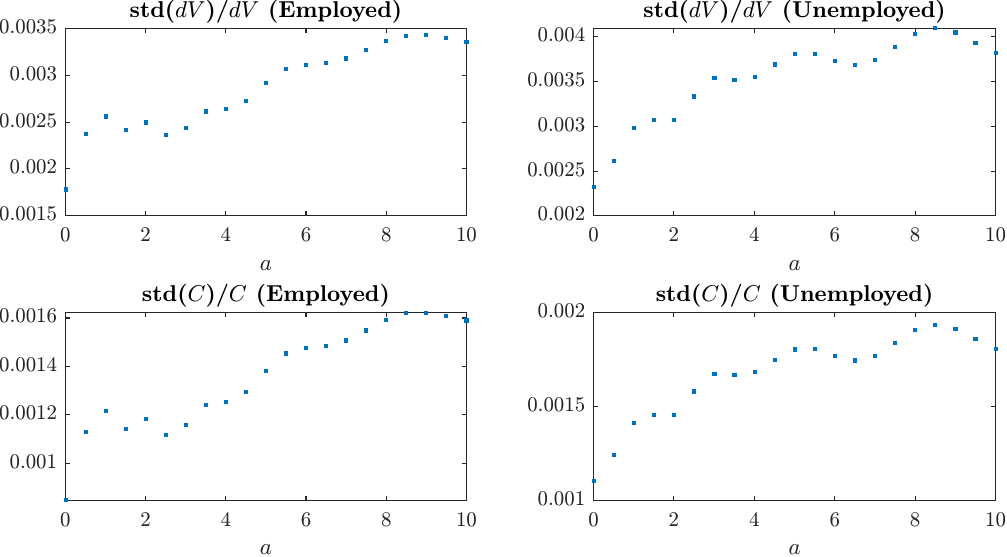}
    \caption{%
    Relative error (one standard deviation divided by mean) for value function and policy function for the finite state approach. }
    \label{fig:rstd_policy_FS_Robustness}
\end{figure}

\begin{figure}
    \centering
    \includegraphics[width =\textwidth]{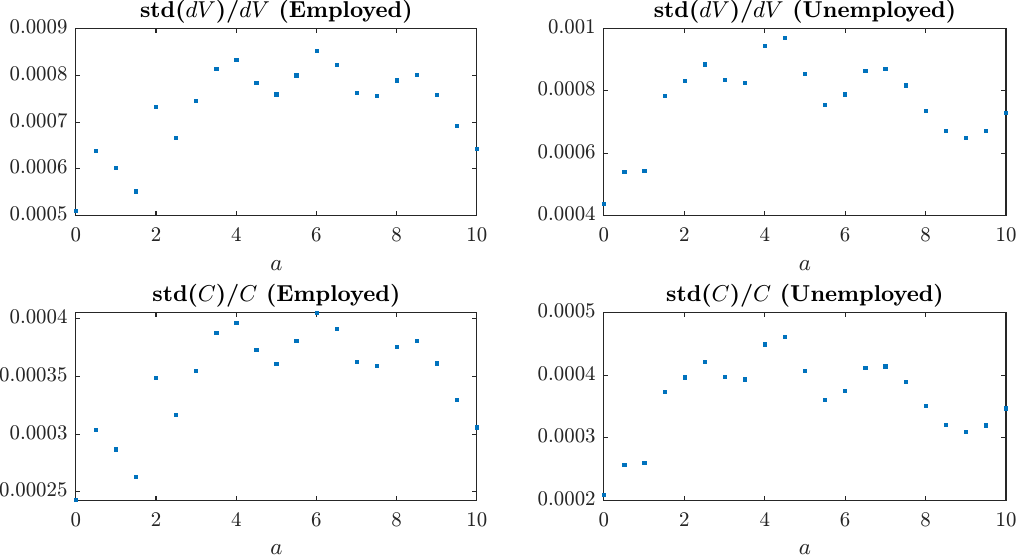}
    \caption{%
    Relative error (one standard deviation divided by mean) for value function and policy function for the projection approach.
    }
    \label{fig:rstd_policy_PROJ_Robustness}
\end{figure}

\section{Additional Working For Section \ref{sec:add_examples} (Online Appendix)}

\subsection{Firm Heterogeneity and Capital Adjustment Costs} \label{asubsec:firms}

\begin{proof}[Proof of Proposition \ref{prop:master_eqn:firms}]
    \noindent\underline{Household optimization:} The representative household chooses consumption, $C_t$, labour supply, $L_t$, and the wealth invested in firm $i$ equity, $E_t^i$, to solve:
    \begin{align}
        \max_{C_t, L_t, \{E^i\}} \mathbb E \int_0^\infty e^{-\rho t} U(C_t, L_t) dt, \quad \text{where} \quad U(C_t, L_t) = \frac{1}{1-\gamma} \left(C_t- \chi \frac{L_t^{1+\varphi}}{1+\varphi}\right)^{1-\gamma}
    \end{align}
    and where $\rho$ is the continuous time discount factor, $\sigma$ is the coefficient of relative risk aversion, $\chi$ determines the disutility of labor, $\varphi$ is the Frisch elasticity of labor supply, and the household is subject to the budget constraint:
    \begin{align}
        {dA_t} = - C_t dt + w_tL_t dt + \left[\int_{i}E_{t}^i\left(\frac{\pi_{t}^idt + dp_{t}^i}{p_{t}^i}\right)di\right], \quad \text{where }\int_iE_t^i di =A_t.
    \end{align}
    The representative household's HJB equation can be written as:
    \begin{align}
        \rho V(A,z,g) =&\max_{C,L,E_i}\Big\{U(C,L) + \partial_AV(A,z,g)\mu_A + \frac{1}{2}\partial_{AA}V(A,z,g)(\sigma_A)^2\\
        &+\partial_zV(A,z,g)\mu_z + \frac{1}{2}\partial_{zz}V(A,z,g)(\sigma_z)^2 + \partial_{Az}V(A,z,g)\sigma_z\sigma_A\\
        &+\sum_{\epsilon=\epsilon_L,\epsilon_H}\int \partial_g V(A,z,g) \mu_g(k,\epsilon)dk\Big\},
    \end{align}
    where:
    \begin{align}
        \mu_A = &- C + wL +  \left[\int_{i} E^i\left(\frac{\pi^i+ \mu_{p_i}}{p^i}\right)di\right],\\
        \sigma_A = & \left[\int_{i}E^i\left(\frac{ \sigma_{p_i}}{p^i}\right)di\right].
    \end{align}
    Household optimal consumption decision, labor supply decision and portfolio choice give:
    \begin{align}
        \partial_C U(C,L) & = \partial_A V(A,z,g) = \Lambda,\\
        \partial_L U(C,L)& = - \partial_A V(A,z,g) w,\\
        \partial_A V(A,z,g) \left[(\pi^i+\mu_{p_i})-(\pi^j+\mu_{p_j})\right] &= -(\sigma_{p_i}-\sigma_{p_j})\left(\partial_{AA} V(A,z,g)\sigma_A + \partial_{Az}V(A,z,g) \sigma_z\right).
    \end{align}
    
    The third equation implies the standard asset evaluation condition:
    \begin{align}
        p^{i}_{t} = \int_t^{\infty}e^{-\rho (s-t)}\frac{\Lambda_s}{\Lambda_t}\pi^{i}_{s} ds \equiv V^i_t.
    \end{align}
    where $V^i_t$ is the value function of the firm at time $t$.\\
    
    \noindent\underline{Firm optimization:} Each firm takes the household stochastic discount factor $\Lambda_t$ and wage as given and chooses labor hiring and investment to maximize its market value $V^i_t$. 
    Let $x_t:= [k_t,\epsilon_t]$ be the idiosyncratic state variables, $c_t$ be an arbitrary control $\{l_t, n_t\}$ and assume that optimal control $\hat{c}$ exists. Suppose the firm chooses:
    \begin{align}
        c_s^* = \left\{
        \begin{aligned}
            c_s,&\ s\in[0,t+h]\\
            \hat{c}_s,&\ s\in(t+h,\infty)
        \end{aligned}
        \right.
    \end{align}
    Then we have:
    \begin{align}
        V_t(x) \geq& \mathbb{E}_t \left[\int_t^{\infty} e^{-\rho(s-t)}\frac{\Lambda_s}{\Lambda_t}\pi(x,c_s^*,z,g)ds\right]\\
        =&\mathbb{E}_{t}\left[\int_t^{t+h}e^{-\rho (s-t)}\frac{\Lambda_s}{\Lambda_t}\pi(x,c_s,z,g)ds + \frac{\Lambda_{t+h}}{\Lambda_t}e^{-\rho h}V_{t+h}(X_{t+h})\right].
    \end{align}
    where $\pi(x,c_s^*,z,g) := e^{z_t} \epsilon_t(k_t)^\theta (l_t)^\nu - w_t l_t - \left(n_t + \psi(n_t,k_t) \right)$ is firm profit.
    Rearranging gives:
    \begin{align}\label{eq:HJB_Lam_deriv}
        0\geq \mathbb{E}_t\left[\int_t^{t+h}e^{-\rho (s-t)}\frac{\Lambda_s}{\Lambda_t}\pi(c_s)ds\right] + \mathbb{E}_t\left[\frac{\Lambda_{t+h}}{\Lambda_t}e^{-\rho h}V_{t+h}(X_{t+h})-V_t(x)\right].
    \end{align}
    By It\^o's Lemma, we have:
    \begin{align}
        d\Lambda_t &= \left[\partial_z\Lambda\mu_z(z) + \frac{1}{2}\partial_{zz}\Lambda \sigma_z(z)^2+\sum_{\epsilon=\epsilon_L,\epsilon_H}\int_{k}\frac{\partial \Lambda}{\partial g}\mu_g(k,\epsilon)dk \right]dt + \partial_z\Lambda\sigma_z(z) dB_t^0\\
        &\equiv \mu_{\Lambda}(z,g)dt + \sigma_{\Lambda}(z,g)dB_t^0\\
        dV_{t} &= \left[\partial_kV\mu_k(i,k) + \frac{1}{2}\partial_{zz}V \sigma_z(z)^2+\sum_{\epsilon=\epsilon_L,\epsilon_H}\int_{k}\frac{\partial \Lambda}{\partial g}\mu_g(k,\epsilon)dk\right]dt + \partial_z V\sigma_z(z)dB_t^0\\
        & \quad + (V(k,\tilde{\epsilon},z,g) - V(k,\tilde{\epsilon},z,g)dJ_t^i\\
        & \equiv \mu_{V}(k,\epsilon,z,g)dt + \sigma_{V}(k,\epsilon,z,g)dB_t^0 + j_V(k,\epsilon,z,g)dJ_t^i
    \end{align}
    and so:
    \begin{align}
        d(e^{-\rho t}\Lambda_t V_t) = e^{-\rho t}[(-\rho \Lambda_tV_t+\mu_\Lambda V_t + \mu_V \Lambda_t + \sigma_\Lambda \sigma_V) dt + (\sigma_\Lambda V_t +\sigma_V\Lambda_t)dB_t^0 +  j_V\Lambda_tdJ_t^i].
    \end{align}
    So the expected difference is:
    \begin{align}
        \mathbb{E}_t{}&\left[\frac{\Lambda_{t+h}}{\Lambda_t}e^{-\rho h}V_{t+h}(X_{t+h})-V_t(x)\right] \\
        =& \frac{1}{\Lambda_t}\int_t^{t+h} e^{-\rho (s-t)}\Big[-\rho \Lambda_s V_s + \mu_\Lambda V +  \partial_z V\partial_z \Lambda(\sigma_z(z))^2 + \lambda_i (V(k,\Tilde{\epsilon},z,g)-V(k,\epsilon,z,g))\\
        &+\left(\partial_kV\mu_k(k,\epsilon,z,g) +\partial_zV\mu_z(z)+ \frac{1}{2}\partial_{zz}V \sigma_z(z)^2+\sum_{\epsilon=\epsilon_L,\epsilon_H}\int_{k}\frac{\partial \Lambda}{\partial g}\mu_g(k,\epsilon)dk\right)\Lambda_s\Big]ds
    \end{align}
    Substitute this expression into \eqref{eq:HJB_Lam_deriv}, and devide by $h$ to get:
    \begin{align}
        0\geq \mathbb{E}_t{}&\Big[\frac{1}{h}\int_{t}^{t+h}e^{-\rho(s-t)} \frac{\Lambda_s}{\Lambda_t}\pi(x,c_s,z,g)ds \\
        {}&+ \frac{1}{h\Lambda_t}\int_t^{t+h} e^{-\rho (s-t)}\Big[-\rho \Lambda_s V_s + \mu_\Lambda V +  \partial_z V\partial_z \Lambda(\sigma_z(z))^2\\
        &+\lambda_i (V(k,\Tilde{\epsilon},z,g)-V(k,\epsilon,z,g))\\
        & + \left(\partial_kV\mu_k(k,\epsilon,z,g) +\partial_zV\mu_z(z)+ \frac{1}{2}\partial_{zz}V \sigma_z(z)^2+\sum_{\epsilon=\epsilon_L,\epsilon_H}\int_{k}\frac{\partial \Lambda}{\partial g}\mu_g(k,\epsilon)dk\right)\Lambda_s\Big]ds\Big]
    \end{align}
    Taking the limit $h\rightarrow0$ and noting that the inequality becomes equality for optimal control $c=(n,l)$ gives the following:
    \begin{align}
        0 = &\max_{n,l}\Big\{- \left(\rho -{\mu_{\Lambda}(z,g)}/{\Lambda(z,g)}\right)V(k,\epsilon,z,g)+ \pi(n,k,\epsilon,l
        ) \\
        &+ \partial_kV\mu_k(n,k)+\partial_zV\mu_z(z)+ \frac{1}{2}\partial_{zz}V \sigma_z(z)^2 
        +\lambda_i (V(k,\Tilde{\epsilon},z,g)-V(k,\epsilon,z,g))\\
        &
        + \frac{1}{\Lambda} \partial_z \Lambda\partial_z V(\sigma_z(z))^2
        +\sum_{\epsilon=\epsilon_L,\epsilon_H}\int\frac{\partial V}{\partial g}\mu_g(k,\epsilon)dk\Big\}.
    \end{align}
    Taking first order conditions we have:
    \begin{align}
        n^* ={}& \frac{k}{\chi_1}\left(\frac{\partial V}{\partial k} - 1\right) &
        l^* ={}& \left(\frac{w}{\nu z \epsilon k^{\theta}}\right)^{\frac{1}{\nu -1}}
    \end{align}
    Substituting in the optimal labor hiring decision $l^*$ and investment $n^*$, and all equilibrium conditions will give the stated master equation.\\
    
    \noindent\underline{Derivation for $\mu_{\Lambda}(z,g)$}. 
    In order to take the master equation to the computer, we need to derive an expression for $\mu_{\Lambda}$ through repeated application of It\^o's Lemma.
    We start by defining: 
    \begin{align}
        f(z,g) := C(z,g) - \chi \frac{L(z,g)^{1+\varphi}}{1+\varphi},
    \end{align}
    so that $\Lambda = f^{-\gamma}$.
    Applying It\^o's Lemma to $\Lambda = f^{-\gamma}$ gives:
    \begin{align}
        \mu_{\Lambda}(z,g) &=-\gamma f(z,g)^{-\gamma-1}\mu_f(z,g)+ \frac{1}{2}\gamma(\gamma+1)f(z,g)^{-\gamma-2}(\sigma_f(z,g))^2,\\
        \sigma_\Lambda(z,g) &= -\gamma f(z,g)^{-\gamma-1}\sigma_f(z,g)
    \end{align}
    Now, we can derive the expression for $\mu_f$. (For notational convenience, we drop the explicit dependence on $(z,g)$ for the remainder of the section.)
    From the goods market clearing condition, we know the consumption level is:
    \begin{align}
        C = \int_i \left[ z\epsilon^i (k^i)^\theta (l^i)^\nu - \left(n^i + \frac{\chi_1}{2}\frac{(n^i)^2}{k^i}\right) \right]di := \int_i c^i di.
    \end{align}
    So, by It\^o's lemma we have:
    \begin{align}
        \mu_f = \int_i \mu_{c_i} di- \chi L^{\varphi} \mu_{L} - \frac{\varphi\chi}{2} L^{\varphi-1}(\sigma_L)^2= \int_i\left(\mu_{c_i} - \chi L^{\varphi} \mu_{l_i}\right)di- \frac{\varphi\chi}{2} L^{\varphi-1}(\sigma_L)^2,
    \end{align}
    where we used the labor market clearing condition that $L = \int_i l^idi, \text{ and } \mu_L = \int_i \mu_{l_i}di$.
    The drift $\mu_{c_i}$ is given by:
    \begin{align}
        \mu_{c_i} =
        {}& \epsilon^i (k^i)^\theta (l^i)^\nu\mu_z +  \theta z\epsilon^i (k^i)^{\theta-1} (l^i)^\nu\mu_{k_i} + \nu z\epsilon^i (k^i)^\theta (l^i)^{\nu-1} \mu_{l_i} \\
        {}& +\frac{\nu(\nu-1)}{2}z\epsilon^i (k^i)^\theta (l^i)^{\nu-2}(\sigma_{l_i})^2 - \left[\mu_{n_i}(1+\frac{\chi_1 n^i}{k^i}) + \frac{\chi_1}{2k^i}(\sigma_{n_i})^2- \frac{\chi_1}{2}\frac{(n^i)^2}{(k^i)^2}\mu_{k_i}\right].
    \end{align}
    We can simplify this expression by applying the labor market clearing condition $w = \chi L^\varphi = \nu z\epsilon^i (k^i)^\theta (l^i)^{\nu-1}$
    to cancel out terms $\nu z\epsilon^i(k^i)^\theta (l^i)^{\nu-1}  \mu_{l_i}\text{ and } \chi L^\varphi\mu_{l_i}$, for every $i$. Additionally, we can merge terms $\frac{\nu(\nu-1)}{2}(\sigma_{n_i})^2$ with $\frac{\varphi\chi}{2}N^{\varphi+1}(\sigma_L)^2$. We can also apply It\^o's lemma to the labor market clearing condition to get $\frac{\sigma_{l_i}}{l^i} = \frac{\sigma_{L}}{L} = \frac{\sigma_z/z}{\varphi+1-\nu}$,
    which means that:
    \begin{align}
        &\int_i z\epsilon^i(k^i)^\theta (l^i)^\nu\frac{\nu(\nu-1)}{2}(\sigma_{l_i}/l^i)^2di - \frac{\varphi\chi}{2}L^{\varphi+1}(\sigma_L)^2=- \frac{Lw (\sigma_z)^2}{2(1+\varphi-\nu)z^2}.
    \end{align}
    Combining these simplifications gives that:
    \begin{align}
        \mu_f =& \int_i\left((\epsilon^i (k^i)^\theta (l^i)^\nu\mu_{z}+ z\epsilon^i (k^i)^{\theta-1} (l^i)^\nu \mu_{k_i}) - \left[\mu_{n_i}\frac{\partial V^i}{\partial k^i}+ \frac{\chi_1}{2k^i}(\sigma_{n_i})^2- \frac{\chi_1}{2}\frac{(n^i)^2}{(k^i)^2}\mu_{k}\right]\right)di\\
        &- \frac{Lw (\sigma_z)^2}{2(1+\varphi-\nu)z^2}.
    \end{align}
    The remaining terms to be specified are $\mu_{n_i}$ and $\sigma_{n_i}$. 
    To get these expression we apply It\^o's lemma to the optimal investment decision $n^i= \frac{k_i}{\chi_1}(\partial_kV^i -1)$, which implies:
    \begin{align}
        {\mu}_{n_i} &= \mu_{k_i}\left[\frac{1}{\chi_1} \left(\frac{\partial V^i}{\partial k^i}-1\right) + \frac{k_i}{\chi_1}\frac{\partial^2 V^i}{\partial (k^i)^2}\right] + \frac{k^i}{\chi_1}\frac{\partial^2 V^i}{\partial k^i\partial z}\mu_z + \frac{1}{2}\frac{k^i}{\chi_1}\frac{\partial^3V^i}{\partial k^i \partial z^2}(\sigma_z)^2\\
        \sigma_{n_i} &= \frac{k^i}{\chi_1}\frac{\partial^2V^i}{\partial k^i\partial z}\sigma_z
    \end{align}
    Finally, the expression for $\sigma_f$ is:
    \begin{align}
        \sigma_f =  \epsilon^i (k^i)^\theta (l^i)^\nu \sigma_z - \sigma_{n_i}\frac{\partial V^i}{\partial k^i}.
    \end{align}
\end{proof}

\renewcommand{\arraystretch}{.9}
\begin{table}[H]
\centering
\begin{tabular}{lcc}
\hline
Parameter & Symbol & Value  \\
\hline
Capital share & $\theta$ & $0.21$  \\
Labor share & $\nu$ & $0.64$  \\
Quarterly depreciation & $\delta$ & $0.025$ \\
Risk aversion & $\gamma$ & $1.0$ \\
Quarterly discount rate & $\rho$ & $1.25\%$ \\
Mean TFP & $\overline{Z}$ & $0.0$ \\
Reversion rate & $\eta$ & $1.0$ \\
Adjustment cost & $\chi_1$ & $2.0$ \\
Labor disutility & $\chi$ & $2.21$ \\
Frischer elasticity & $\varphi$ & $0.5$ \\
Volatility of TFP & $\sigma_z$ & $0.007$ \\
Transition rate (L to H) & $\lambda_L$ & $0.25$ \\
Transition rate (H to L) & $\lambda_H$ & $0.25$ \\
Low labor productivity & $\epsilon_L$ & $0.9$ \\
High labor productivity & $\epsilon_H$ & $1.1$ \\
Minimum of capital & $k_{min}$ & $1.0$ \\
Maximum of capital & $k_{max}$ & $6.0$ \\
Maximum TFP  & $z_{max}$ & $0.04$ \\
Minimum TFP  & $z_{min}$ & $-0.04$ \\
\hline
\end{tabular}
\caption{Parameters for Firm Dynamics Model} \label{tab:firm_dynamics}
\end{table}

\subsubsection{Implementation Details of Firm Dynamics Model}

Our economic parameters are shown in Table \ref{tab:firm_dynamics}. \\

\noindent\textit{Parameterization and Master Equation:} We parameterize the marginal value of capital $\partial_kV^i$ by a neural network with 5 layers and 64 neurons per layers, a tanh activation function between layers and no activation at the output level. We take derivatives w.r.t $k$ to the master equation stated in Proposition \ref{prop:master_eqn:firms} to reduce the number of auto-derivatives to be taken as the marginal value of capital $\partial_kV^i$ and its derivatives are the only terms that appear in the dynamics of $\Lambda$.\\

\noindent\textit{Sampling and Training:} We sample points of firm $i$'s individual capital level uniformly from $[k_{min},k_{max}]$. We sample points of the distribution $\{k^j\}$ in two steps: first we sample each $k^j$ uniformly from $[k_{min},k_{max}]$; then we sample a equilibrium wage shifter $\Delta w$ to move the distribution such that the equilibrium wage is updated as $w = (1+\Delta_w)w$, where $\Delta_w$ is drawn uniformly from $[-0.1, 0.1]$. This step is similar as the moment sampling described in section \ref{sec:solution:algorithm:sampling}. The loss function is constructed as the master equation loss plus the ``shape constraint'' $\partial_{kk}V^i>0$ with equal weights. The training part contains two steps:
\begin{enumerate}
    \item Training the neural network without $\mu_\Lambda$ in the master equation for 2000 epochs. The purpose of this step is to omit additional feedback loops between consumption and investment and get a downward sloping firm's marginal value on capital $\partial_kV$.
    \item Training the neural network with the full master equation.
\end{enumerate}
The training loss decay plot is available in Figure \ref{fig:loss_path_FAKH}.

\begin{figure}[H]
    \centering
    \includegraphics[width = .6\textwidth]{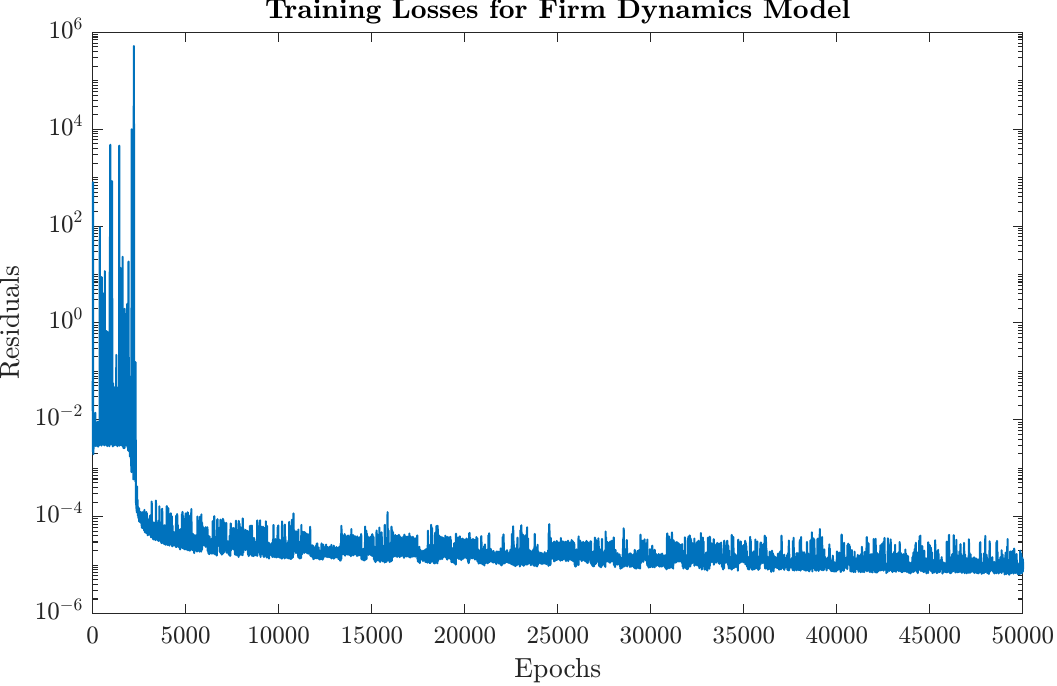}
    \caption{\small Training losses plot for Firm Dynamics model.}
    \label{fig:loss_path_FAKH}
\end{figure}

\subsection{Spatial Model} \label{asubsec:spatial}

\subsubsection{Details on the Model Solution} \label{asubsec:spatial:model_solution}

\noindent\emph{Worker decision problem:}
Formally, the decision problem of worker $i$ is to choose processes $\mathsf{c}^i = \{ c_t^i : t \ge 0 \}$ and $\mathsf{\varsigma}^i = \{ \varsigma_t^i : t \ge 0 \}$ to maximize
$$\mathbb{E}_0 \left[ \int_0^{\infty} e^{-\rho t} \left( u(c_t^i) dt + v_t^i(j_t^i + \varsigma^i_t) dJ_t^i\right)\right], \quad \text{where} \quad u(c_t^i) = \frac{(c_t^i)^{1-\gamma}}{1-\gamma} $$
subject to the set of static budget constraints
$$\forall t\geq 0 : \quad c_t^i = w_{j_t^i,t}$$
and the location evolution equation
$$dj_t^i = \varsigma^i_t dJ_t^i.$$
Here, $w_{j,t}$ denotes the market wage in location $j$ at time $t$, $dJ_t^i$ is an idiosyncratic Poisson shock with constant arrival rate $\mu$ that represents the arrival of moving opportunity shocks, $\varsigma^i_t\in \{1-j_t^i,...,J-j_t^i\}$ is the moving decision conditional on a moving opportunity shock arriving at time $t$, and $v_t^i(j^\prime)$ is an additive utility shifter conditional on a moving opportunity shock and the subsequent choice to move to new location $j^\prime$. This shifter captures location preference shocks and moving costs as follows:
$$v_t^i(j^\prime) := \xi^i_{j^\prime,t}-\tau_{j^i_t,j^\prime},$$
where $\{\xi^i_{j^\prime,t}:j\in\{1,...,J\},t\geq 0\}$ denotes a collection of independent idiosyncratic Gumbel-distributed random variables with mean zero and inverse scale parameter $\nu$ and $(\tau_{j,j^\prime}:j,j^\prime \in \{1,...,J\})$ is the matrix of moving disutility.\footnote{The $\xi_{j^\prime,t}^i$ should be interpreted as preference shocks only at jump times of $dJ_t^i$. At all other times, the $\xi_{j^\prime,t}^i$ play no role for the decision problem because they only enter utility conditional on $dJ_t^i\neq 0$. %
}

Let $V(j,z,g)$ denote the value function of a worker in location $j$ at aggregate state $(z,g)$ in a period without a moving opportunity shock or right after having moved. $V$ and the optimal consumption choice $c$ solve the HJBE
\begin{align}
 0 = \max_{c\; :\; c=w_j(z,g)}\Big\{{}&- \rho V(j,z,g) + u(c) + (\cL_x + \cL_z + \cL_g^{\tilde{\mu}_g}) V(j,z,g)\Big\},\label{eq:spatial_model:hjb}
\end{align}
where the operators $\cL_x$, $\cL_z$, and $\cL_g^{\tilde{\mu}_g}$ are defined as follows:
\begin{align}
    \cL_x V(j,z,g) &= \mu\left(\mathbb{E}^{\xi_1,...,\xi_J}\left[\max_{j^\prime \in \{1,...,J\}}\{V(j^\prime,z,g)  - \tau_{j,j^\prime} + \xi_{j^\prime}\}\right]-V(j,z,g)\right)\\
    \cL_z V(j,z,g) &= \partial_z V(j,z,g)\eta(\overline{z} - z) + \frac{1}{2} \sigma^2 \partial_{zz} V(j,z,g)\label{eq:spatial_model:operator_L_z}\\
    \cL_g^{\tilde{\mu}_g} V(j,z,g) &= \sum_{j^{\prime}=1}^J\partial_{g(j^{\prime})}V(j,z,g)\tilde{\mu}_{g}(j^\prime,z,g).\label{eq:spatial_model:operator_L_g}
\end{align}
Here, the operator $\cL_x$ contains the location choice problem conditional on receiving a moving opportunity and $\mathbb{E}^{\xi_1,...,\xi_J}$ denotes the expectation over the $J$ independent location preference shock draws $\xi_1,...,\xi_J$.\footnote{
        We use the conditional destination choice $j^\prime$ in place of $\varsigma$ for the control in the location choice problem.The two are related via $j^\prime = j + \varsigma$.}
We compute this expectation and the conditional choice probabilities by using several well-known properties of the Gumbel distribution. First, adding a non-random number to a Gumbel distribution shifts the mean (location parameter) but results again in a Gumbel distribution with the same scale parameter. Consequently, the $j^\prime$-th argument in the maximum operator is Gumbel-distributed with inverse scale parameter $\nu$ and mean $V(j^\prime,z,g) - \tau_{j,j^\prime}$. Second, the maximum of these finitely many independent Gumbel-distributed random variables (with identical scale parameter) is again Gumbel-distributed with mean
$$\mathbb{E}^{\xi_1,...,\xi_J}\left[\max_{j^\prime \in \{1,...,J\}}\{V(j^\prime,z,g)  - \tau_{j,j^\prime} + \xi_{j^\prime}\}\right] = \frac{1}{\nu}\log \left(\sum_{j^\prime=1}^{J}e^{\nu(V(j^\prime,z,g)-\tau_{j,j^\prime})}\right).$$
Plugging this result into the expression for the operator $\cL_x$ allows us to eliminate the location choice problem from the HJBE by writing the operator as
\begin{equation}\label{eq:spatial_model:operator_L_x}
\cL_x V(j,z,g) = \mu\left(\frac{1}{\nu}\log \left(\sum_{j^\prime=1}^{J}e^{\nu(V(j^\prime,z,g)-\tau_{j,j^\prime})}\right)-V(j,z,g)\right).
\end{equation}
Third, we apply the property of the Gumbel distribution that the ex-ante probability of the $j^\prime$-th draw being the largest is given by
$$\pi_{j,j^\prime}(V(\cdot,z,g)) = \mathbb{P}^{\xi_1,...,\xi_J}\left(\arg\max_{k \in\{1,...,J\}} \{V(k,z,g)  - \tau_{j,k} + \xi_{k}\} = j^\prime \right) = \frac{e^{\nu(V(j^\prime,z,g)  - \tau_{j,j^\prime})}}{\sum_{k=1}^{J}e^{\nu(V(k,z,g)  - \tau_{j,k})}}.$$
This formula tells us the choice probabilities before preference shocks are drawn and coincides with the conditional choice probabilities stated in the main text.

We remark that the structure of the HJBE in this model does not fit into our generic model setup from Section~\ref{sec:genericModel} before the location choice has been made. However, it does fit into that framework after the location choice is substituted in up to a straightforward extension: instead of a single idiosyncratic Poisson jump process we need $J$ Poisson processes for destinations $j^\prime \in \{1,...,J\}$ with arrival rates $\lambda \pi_{j,j^\prime}(V(\cdot,z,g))$. \\

\noindent\emph{Firm optimization and labor market clearing:}
The labor demand of the representative firm in each location $j$ implies that, in equilibrium,
$$w_{j,t} = (1-\alpha) \exp(\beta_j  + \chi_j z_t) L_{j,t}^{-\alpha},$$
where $L_{j,t}$ is total labor input at location $j$. Each worker inelastically supplies one unit of labor, so that the total labor supply at $j$ equals the mass of workers, $L_{j,t} = g_t(j)$. Hence,
$$Q(g_t,z_t) = \begin{bmatrix} w_{1,t} \\ w_{2,t} \\ \vdots \\ w_{J,t} \end{bmatrix} = \begin{bmatrix} (1-\alpha) \exp(\beta_1  + \chi_1 z_t) (g_t(1))^{-\alpha} \\ (1-\alpha) \exp(\beta_2  + \chi_2 z_t) (g_t(2))^{-\alpha} \\ \vdots \\ (1-\alpha) \exp(\beta_J  + \chi_J z_t) (g_t(J))^{-\alpha} \end{bmatrix}.$$

\noindent\emph{Kolmogorov Forward Equation:}
The KFE can be derived using the remark at the end of the discussion of the worker problem. Mathematically, after the optimal location choice, the evolution of the idiosyncratic state $j_t^i$ can be described by
$$dj_t^i = \sum_{j^\prime=1}^{J}(j^\prime-j_t^i) d\grave{J}_{j^\prime,t}^i,$$
where $\grave{J}_{1,t}^i$, $...$, $\grave{J}_{J,t}^i$ are independent Poisson processes with arrival rates $\mu \pi_{j_t^i,1}(V(\cdot,z_t,g_t))$, $...$, $\mu \pi_{j_t^i,J}(V(\cdot,z_t,g_t))$. Using this representation of the idiosyncratic state evolution and adapting the derivation of the KFE in Appendix~\ref{asec:genericModel} for multiple Poisson processes, we obtain
\begin{equation}\label{eq:spatial_model:kfe}
\mu_g(j, z, g) = \mu\left(\sum_{k=1}^J\pi_{k,j}(V(\cdot,z,g))g(k)-g(j)\right).
\end{equation}

\begin{proof}[Proof of Proposition~\ref{prop:spatial_model:master_equation}]

\noindent\underline{Master Equation:}
We obtain equation~\eqref{eq:ME:spatial_model} immediately from plugging in the optimal consumption choice $c^\ast(j,z,g)=w_j(z,g)$ into the HJBE~\eqref{eq:spatial_model:hjb} and showing that, once we impose belief consistency, $\tilde{\mu}_g = \mu_g$, the operators $\cL_x$, $\cL_z$, and $\cL_g^{\tilde{\mu}_g}$ in the latter equation are identical to the operators $\cL_x$, $\cL_z$, and $\cL_g$ stated in the proposition. One immediately verifies that this is the case from equations~\eqref{eq:spatial_model:operator_L_x}, \eqref{eq:spatial_model:operator_L_z}, and \eqref{eq:spatial_model:operator_L_g} derived previously.
Finally, the KFE is given by equation~\eqref{eq:spatial_model:kfe}, which is identical to the one stated in the proposition.

\noindent\underline{Relationship to \cite{bilal2021solving}:}
\citet[equation~(19)]{bilal2021solving} states the master equation (using our notation whenever concepts are identically defined)
\begin{align*}
\rho V(j,z,g) = U(j,z,g) &+ \mu\left(\frac{1}{\nu}\log \left(\sum_{j^\prime=1}^{J}e^{\nu(V(j^\prime,z,g)-\tau_{j,j^\prime})}\right)-V(j,z,g)\right)\\
&+ \partial_z V(j,z,g)\eta(\overline{z} - z) + \frac{1}{2} \sigma^2 \partial_{zz} V(j,z,g)\\
&+ \sum_{j^\prime=1}^{J} \partial_{g(j^\prime)}V(j,z,g)\mu\left(\sum_{k=1}^J\pi_{k,j^\prime}(V(\cdot,z,g))g(k)-g(j^\prime)\right),
\end{align*}
where $U(j,z,g)$ is a flow utility term. With the operators defined above, this equation can be written as follows:
$$\rho V(j,z,g) = U(i,z,g) + (\cL_x + \cL_z + \cL_g)V(j,z,g).$$
This is the same equation as equation~\eqref{prop:spatial_model:master_equation} if and only if
$$U(i,z,g) = u(w_j(z,g)) = u((1-\alpha)\exp(\beta_j  + \chi_j z_t)(g_t(j))^{-\alpha}).$$
\cite[equation~(45)]{bilal2021solving} defines the flow utility $U(i,z,g)$ as
$$U(i,z,g) = u(C_{0,j}e^{\zeta\chi_{j}^B z}(g(j))^{-\xi}),$$
where $\zeta,\xi \in(0,1)$ are (derived) parameters, the notation $\chi_j^B$ emphasizes that these are the $\chi_j$-parameters in Bilal's variant of the model (as opposed to ours) and $C_{0,j}$ is a time-invariant, location-specific consumption shifter that depends on both location-specific productivity and housing supply. Evidently, the utility flow terms are identical state by state if we choose parameters such that
$$\alpha = \xi,\qquad \chi_j = \zeta \chi_j^B,\qquad (1-\alpha)e^{\beta_j} = C_{0,j}. $$
Clearly, for any set of (derived) parameters $\zeta$, $\xi$, $\chi_j^B$, $C_{0,j}$ in Bilal's variant of the model, it is possible to find parameters $\alpha$, $\chi_j$, and $\beta_j$ in our variant to make these conditions hold. Note, furthermore, that none of the parameters $\alpha$, $\chi_j$, or $\beta_j$ appear elsewhere in the master equations, so that assigning them in this way does not prevent us from matching the remaining aspects of \cite{bilal2021solving}'s master equation.
\end{proof}

\subsubsection{Parameters}\label{asubsec:spatial:parameters}

All parameters that are not location-specific are summarized in Table~\ref{tab:spatial_model:parameters}. We choose $\gamma$, $\rho$, $\overline{Z}$, $\eta$, and $\sigma$ consistent with our parameterization of the KS model (compare Appendix~\ref{asec:implementation_details:parameters}). We choose $\mu$ and $\nu$ as in \cite{bilal2021solving} and $\alpha$ to achieve the same elasticity of the local wage to population changes as in \cite{bilal2021solving} utilizing the isomorphism between our model specification and theirs from Proposition~\ref{prop:spatial_model:master_equation}.\footnote{\cite{bilal2021solving} does not state all parameters but refers for the calibration to \cite{bilal2023anticipating}, which has a slightly different model. To the extent that parameter values cannot be inferred from \cite{bilal2021solving} directly, we try to match the corresponding parameter in \cite{bilal2023anticipating} instead.}

\begin{table}[h]
\centering
\begin{tabular}{lcc}
\hline
Parameter & Symbol & Value  \\
\hline
Number of locations & $J$ & $50$  \\
Profit share & $\alpha$ & $0.55$  \\
Risk aversion & $\gamma$ & $2.1$ \\
Discount rate & $\rho$ & $0.05\%$ \\
Mean TFP & $\overline{Z}$ & $0.00$ \\
Reversion rate & $\eta$ & $0.50$ \\
Volatility of TFP & $\sigma$ & $0.01$ \\
Moving rate & $\mu$ & $2.3$ \\
Preference shock parameter & $\nu$ & $0.48$ \\
Minimum TFP (for sampling)  & $z_{min}$ & $-0.04$ \\
Maximum TFP (for sampling)  & $z_{max}$ & $0.04$ \\
\hline
\end{tabular}
\caption{Location-independent parameters for spatial model\label{tab:spatial_model:parameters}}
\end{table}

We sample the location-specific parameters $\beta_j$, $\chi_j$, and $\tau_{j,j^\prime}$ randomly using the following sampling strategies:
\begin{itemize}
    \item We draw $\beta_j$ randomly from a distribution such that the equilibrium population distribution in the simplified steady-state model without common noise and with preference shock parameter $\nu\rightarrow \infty$ is truncated Pareto distributed over $[1,50]$ with shape parameter $1$. The choice of the Pareto distribution with shape parameter $1$ is motivated by the observation that city sizes appear to approximately satisfy ``Zipf's law'', but we use a truncation for numerical stability. We use the simplified model with $\nu\rightarrow \infty$ for this exercise because this special case has a closed-form solution and so does not require us to solve the model repeatedly in order to calibrate the $\beta_j$-distribution.\footnote{However, this does mean that the ultimate population distribution in our model solution will be less dispersed than a Pareto with shape parameter $1$ because preference shocks generate smoothing when $\nu<\infty$.}
    \item We draw the sensitivity parameters $\chi_j$ of locations to aggregate productivity by sampling uniformly from an exponential distribution with parameter $1$ (so that the mean is $1$). We then multiply the resulting $\chi_j$-draws by 0.7 reflecting the fact that the sensitivity of local consumption to productivity shocks is scaled down in \citet{bilal2021solving}'s model with housing (see proof of Proposition~\ref{prop:spatial_model:master_equation} for the correspondence between the models).
    \item We group the $50$ locations into four ``clusters'', one ``central'' cluster with $20$ locations and three ``periphery'' clusters with $10$ locations each. We set a baseline value for $\tau_{j,j^\prime}$ based on cluster membership and then randomly perturb these baseline values to generate a less regular pattern in moving costs. Regarding the baseline values ($\tau_{j,j^\prime}^0$), we make the following choices:
    \begin{itemize}
        \item Movements to the same location are free $\tau_{j,j}^0 = 0$ for all $j$
        \item Movements within a cluster have low baseline cost, $\tau_{j,j^\prime}^0 = 8.882\times 10^{-3}$. To provide a sense for the magnitude, in the steady-state model with $\nu\rightarrow \infty$ and no moving costs (which has a closed-form solution in which all workers' expected utility is identical), if one agent had to pay this moving cost after every shock arrival $dJ_t^i$ (but without changing the rest of the model, including prices), this agent's welfare would fall by the equivalent of a reduction of 2\% of consumption each period.
        \item Movements between the central and a periphery cluster have intermediate baseline cost, $\tau_{j,j^\prime}^0 = 4.8569\times 10^{-2}$. Again, to provide interpretation, the corresponding number of consumption-equivalent utility loss in the thought experiment described previously is here 10\%.
        \item Movements between two distinct periphery clusters have high baseline cost, $\tau_{j,j^\prime}^0 =  2.98030\times 10^{-1}$. The consumption-equivalent utility loss in the previous thought experiment is for this moving cost 40\%.
    \end{itemize}
    For the perturbation of baseline costs, we use in all cases
    $$\tau_{j,j^\prime} = \varepsilon_{j,j^\prime} \tau_{j,j^\prime}^0,\qquad 1\leq j\leq j^\prime \leq J,$$
    where $\varepsilon_{j,j^\prime}$ is a scaling factor that is uniformly distributed over $[0.5,1.5]$ (independent across $j,j^\prime$). The remaining entries of $\tau$ are defined by requiring that the $\tau$-matrix is symmetric.
\end{itemize}
The resulting vectors of sampled parameter values $\boldsymbol{\beta}=(\beta_1,...,\beta_{50})$ and $\boldsymbol{\chi}=(\chi_1,...,\chi_{50})$ are:
\begin{small}
\begin{align*}
 \boldsymbol{\beta} = - {}& (2.60, 2.23, 2.88, 2.69, 2.80,
       2.83, 2.77, 2.66, 2.62, 2.48,\\
       {}& \;2.60, 2.29, 2.76, 1.83, 2.86,
       2.31, 2.60, 2.46, 2.80, 2.76,\\
       {}& \;2.06, 1.29, 2.68, 2.27, 1.83,
       1.76, 2.83, 2.86 , 2.78, 1.83,\\
       {}& \;2.82, 2.60, 1.39, 2.48, 2.27,
       2.68, 2.28, 1.97, 2.87, 2.17,\\
       {}& \;1.02, 2.17, 2.71, 2.09, 2.82,
       2.57, 1.70, 2.70, 2.70, 2.81)\\
 \boldsymbol{\chi} = \phantom{+} {}& (2.76, 0.27, 1.09 , 0.93 , 0.50,
       2.05, 0.39, 1.34, 0.37, 0.25,\\
       {}& \;1.60, 0.62, 0.26, 0.62, 2.10,
       0.44, 0.29, 0.46, 0.04, 0.37,\\
       {}& \;0.07, 1.39, 1.38, 0.15, 0.65,
       1.26, 0.05, 0.74, 0.20 , 0.22,\\
       {}& \;0.09, 0.33, 0.20, 0.74, 0.92,
       0.08, 0.59, 0.03, 0.29, 0.33,\\
       {}& \;1.52, 0.04, 0.56, 0.38, 0.63,
       1.01, 0.07, 0.39 , 4.10, 0.34)
\end{align*}
\end{small}
In the interest of space, the 2500 sampled parameter values for $\tau_{jj^\prime}$ are omitted here.

\subsubsection{Implementation Details for Spatial Model}\label{asubsec:spatial:implementation}

\emph{Distribution Representation and Master Equation:} 
As argued in the main text, the discrete state space method is most appropriate here because the model's idiosyncratic state space is naturally discrete and because all the finite mass points $g(j)$ matter directly for some of the entries of $Q(z,g)$. Furthermore, because the number of locations is already finite, we do not approximate the distribution but simply choose $\hat{\varphi}=g\in [0,\infty)^J$, so that $\hat{V}=V$. The master equation for $\hat{V}$ used in the solution algorithm therefore corresponds exactly to the model's true master equation for $V$ stated in Proposition~\ref{prop:spatial_model:master_equation}. We note that this master equation also makes sense (mathematically and economically) when $g$ has total mass different from $1$ (i.e. is not a probability distribution). We exploit this fact in our sampling approach as described below.\\

\noindent\emph{Network Structure:} We parameterize the value function $V(j,z,g)$ with a neural network. We use a fully connected feed-forward neural network with $5$ layers and $64$ neurons per layer, a $\tanh$ activation function between layers, and no activation at the output level. We transform locations $j\in\{1,...,J\}$ into vectors in $\{0,1\}^J$ using one-hot encoding before feeding them into the neural network. We initialize the neural network so that $V$ matches the value function in the steady-state model with aggregate productivity fixed at $z=0$. This is done through a pre-training phase.\footnote{In the steady-state model, the master equation reduces to a finite-dimensional system of non-linear algebraic equations. We solve this system numerically with a Newton method. In the pre-training phase, we minimize a standard least-squares loss function between the neural network output and the pre-computed steady-state value function.}\\

\noindent\emph{Sampling:} 
We sample points of the form $(j,\hat{\varphi},z)$ by sampling the three dimensions independently as follows. We sample $j\in \{1,...,J\}$ and $z\in [z_{min},z_{max}]$ from a uniform distribution over their respective domains. For the distribution $\hat{\varphi}=g$, we combine two sampling schemes, (i) mixed steady state sampling and (ii) ergodic sampling, and we gradually increase the proportion of the sample from scheme (ii) from 0\% to 95\% during training. Sampling scheme (i) computes sample distributions $\hat{\varphi}$ in four steps:
\begin{enumerate}
 \item Draw a distribution $\hat{\varphi}_1$ as a random mixture from three baseline distributions, $g^{ss,z=0}$, $g^{ss,z=0.03}$, and $g^{ss,z=-0.03}$, where $g^{ss,z=\bar{z}}$ denotes the stationary distribution in the steady-state model with aggregate productivity fixed at $z=\bar{z}$. The weights on each of the three distributions are drawn uniformly from the $3$-simplex.
 \item Draw a distribution $\hat{\varphi}_2$ uniformly from the $J$-simplex.\footnote{To draw uniformly from the $J$-simplex, we use the well-known connection to a special case of the Dirichlet distribution and a common approach to draw from that distribution. Specifically, we draw uniformly from the $J$-simplex by drawing $J$ independent exponentially distributed random variables with parameter $1$ and then renormalize so that the resulting vector sums to $1$. } $\hat{\varphi}_2$ serves the purpose of introducing additional (random) noise into the distribution samples.
 \item Combine $\hat{\varphi}_1$ and $\hat{\varphi}_2$ by forming a convex combination with weight on the first distribution drawn uniformly from the interval $[0.95,1]$.
 \item Scale the resulting distribution by a uniform random number drawn from $[0.98,1.02]$. This last step perturbs the total mass of the distribution, which helps the neural network to learn the sign of the derivatives $\partial_{g(j)} V$.
\end{enumerate}
For the ergodic sampling (scheme (ii)), we start the simulation from an initial sample drawn from scheme (i). Because of the mass perturbation in step 4 and mass preservation in the KFE, also the total masses in our ergodic sample end up being uniformly distributed in $[0.98,1.02]$.\\

\noindent\emph{Loss Function:} We impose penalties to impose the shape constraints $\partial_z V > 0$ and $\partial_{g(j)} V < 0$ for all $j=1,...,J$ by choosing the shape error as follows:
\begin{align*}
\cE^s(\theta^n, S^{n}) :=& \frac{1}{|S^n|} \sum_{(j,z,\hat{\varphi})\in S^n} |\max\{-\partial_z\hat{V}(j,z,\hat{\varphi};\theta^n),0\} |^2\\
&+ \frac{1}{J|S^n|}\sum_{(j,z,\hat{\varphi})\in S^n}\sum_{j^\prime=1}^{J}|\max\{\partial_{g(j^\prime)}\hat{V}(j,z,\hat{\varphi};\theta^n),0\} |^2
\end{align*}
We combine this the equation residual error $\cE^e(\theta^n, S^{n})$ for the total error $\cE(\theta^n, S^{n})$ as described in Algorithm~\ref{alg:generic} using the weights $\kappa^e=1$ and $\kappa^s=0.2$.\\

\subsubsection{Training Losses}\label{asubsec:spatial:loss_decay}

The training loss decay plot is available in Figure \ref{fig:spatial:training_loss}.

\begin{figure}[hbtp]
    \centering
    \includegraphics[width=0.6\textwidth]{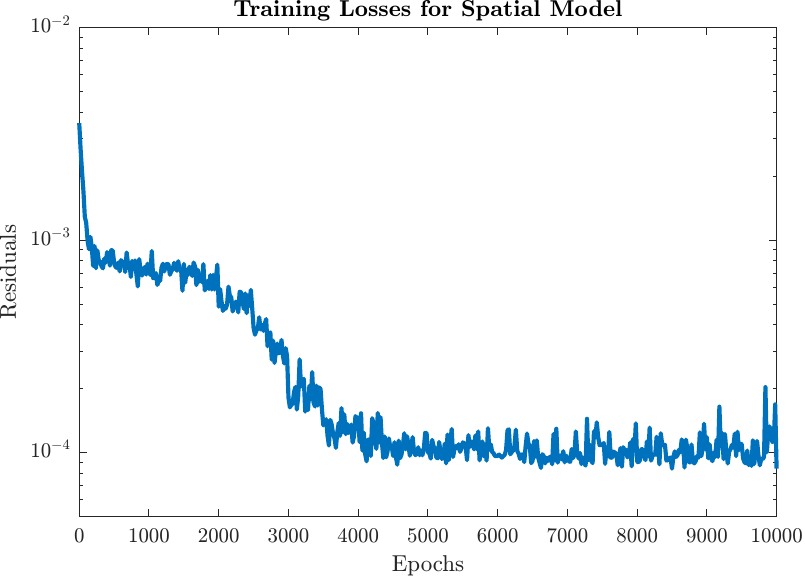}
    \caption{\small Training losses plot for the spatial model (pre-training not shown)}
    \label{fig:spatial:training_loss}
\end{figure}

\end{document}